\documentclass[reqno,10pt]{amsart}

\usepackage{amssymb,amsmath,amsthm,amsfonts}
\usepackage{color}
\usepackage{hyperref}
\usepackage{graphics,graphicx,wrapfig,epsf}
\usepackage{enumerate}
\usepackage{a4wide}

\theoremstyle{plain}
\newtheorem{prop}{Proposition}[section]
\newtheorem{theorem}[prop] {Theorem}
\newtheorem{lemma}[prop]{Lemma}
\newtheorem{definition}[prop]{Definition}
\newtheorem{corollary}[prop]{Corollary}
\numberwithin{equation}{section}

\theoremstyle{definition}

\newtheorem{assumption}[prop]{Assumption}

\theoremstyle{remark}
\newtheorem{remark}{Remark}

\newcommand{\bbN}{\mathbb{N}} \newcommand{\N}{\mathbb{N}}
\newcommand{\bbR}{\mathbb{R}} \newcommand{\R}{\mathbb{R}}
 
\newcommand{\bbC}{\mathbb{C}} \newcommand{\C}{\mathbb{C}} 
 \renewcommand{\P}{\mathbb{P}}
 \newcommand{\E}{\mathbb{E}}

\newcommand{\dd}{\mathrm{d}} 
\newcommand{\eps}{\epsilon}

\renewcommand{\eps}{\varepsilon}
\renewcommand{\phi}{\varphi}
\renewcommand{\Re}{\mathrm{Re}\,}
\renewcommand{\Im}{\mathrm{Im}\,}

\DeclareMathOperator{\const}{const}

\newcommand{\e}[1]{\,{\rm e}^{#1}\,} 
\newcommand{\ii}{{\rm i}}

\newcommand{\be}{\begin{equation}}
\newcommand{\ee}{\end{equation}}


\definecolor{darkgreen}{rgb}{0,0.6,0}


\begin{document}

\title{Singularity analysis for heavy-tailed random variables}

\author{Nicholas M.\ Ercolani, Sabine Jansen, Daniel Ueltschi}

\thanks{Department of Mathematics, The University of Arizona, Tucson, AZ 85721--0089, ({\tt ercolani@math.arizona.edu}). Supported by NSF grant DMS-1212167.}

\thanks{Mathematisches Institut, Ludwigs-Maximilians Universit{\''a}t M{\''u}nchen, Theresienstr. 39, 80333 Munich, Germany,
 ({\tt jansen@math.lmu.de}).}

\thanks{Department of Mathematics, University of Warwick, Coventry, CV4 7AL, United Kingdom ({\tt daniel@ueltschi.org}).}

\date{21 April 2018}

\subjclass{05A15; 30E20; 44A15; 60F05; 60F10}

\keywords{local limit laws; large deviations; heavy-tailed random
variables; asymptotic analysis; Lindel{\"o}f integral;
singularity analysis; bivariate steepest descent}

\maketitle

\begin{abstract}
	We propose a novel complex-analytic method for sums of i.i.d. random variables that are heavy-tailed and integer-valued. The method combines singularity analysis, Lindel{\"o}f integrals, and bivariate saddle points. As an application, we prove three theorems on precise large and moderate deviations which provide a local variant of a result by S.~V.~Nagaev (1973). The theorems generalize five theorems by A.~V.~Nagaev (1968) on stretched exponential laws $p(k) = c\exp( -k^\alpha)$ and apply to logarithmic hazard functions $c\exp( - (\log k)^\beta)$, $\beta>2$; they cover the big jump domain as well as the small steps domain. The analytic proof is complemented by clear probabilistic heuristics. Critical sequences are determined with a non-convex variational problem. 
\end{abstract}

\section{Introduction}

The motivation of the present article is two-fold. First, we present a new analytic method for the investigation of large powers of generating functions of sequences that satisfy some analyticity and log-convexity conditions. The method is explained and developed for probability generating functions but it has potentially broader applications and is motivated by techniques commonly used in analytic combinatorics~\cite{FS}. Specifically, we show that methods akin to singularity analysis can be pushed beyond the realm of functions amenable to singularity analysis in the sense of~\cite[Chapter VI.1]{FS}.

Second, we explore consequences for probabilistic limit laws and prove three theorems on precise large and moderate deviations for sums of independent identically distributed (i.i.d.) random variables that are heavy-tailed~\cite{extremalbook} and integer-valued. The theorems 
generalize results on stretched exponential laws by A.~V.~Nagaev~\cite{nagaev} which have recently attracted interest in the context of the zero-range process~\cite{agl}. They are close in spirit to results by S.~V.~Nagaev~\cite{nagaev73}, however with more concrete conditions on the domain of validity of the theorems, and 
provide deviations results ``on the whole axis''~\cite{rozovskii}. Our assumptions are more restrictive than one may wish from a probabilistic perspective; in return, they allow for sharp results and may provide a helpful class of explicit reference examples. For example, we prove that one of the bounds of the (local) big-jump domain for logarithmic hazard functions derived in~\cite{denisov-dieker-shneer} is sharp.

The analytic proof of the theorems is complemented by clear probabilistic heuristics. Our results cover different regimes: a small steps or moderate deviations regime, where a classical variant of a local central limit theorem with corrections expressed with the Cram{\'e}r series holds~\cite{ibragimov-linnik}, and a big-jump regime where the large deviation is realized by making one out of the $n$ variables large. In the language of statistical mechanics and the zero-range process, they correspond to supersaturated gas and a condensed phase~\cite{agl}. The critical scales that distinguish between regimes are defined with the help of a non-convex variational problem which encodes competing probabilistic effects. The variational problem has been analyzed before~\cite{nagaev,rozovskii}, our strong assumptions on the probability weights allow for a detailed analysis. Our results are further facilitated by the non-negativity of the random variables, which dispenses us from dealing with left tails.

The study of combinatorial generating functions shares much in common with the study of probability generating functions; in fact in many instances they coincide or run parallel as is the case for more recent investigations in the area of random combinatorial structures \cite{arratia-barbour-tavare}. From the viewpoint of complex function theory the key here involves relating asymptotic questions to questions about the nature of the singularities of generating functions viewed as more global analytic objects. In most of the successful applications of this {\it singularity analysis} to coefficient asymptotics a bridge is provided by the realization that the series in question satisfies some global algebraic or differential equation. Generating functions for which this is the case are referred to as {\it holonomic}. Pushing beyond this class in a systematic way requires new ideas and one of the most promising of these is the use of Lindel\"of integrals. Lindel\"of introduced these classically \cite{Lind} as a means to constructively carry out analytic continuations of function elements (series) in a fairly general setting. In more recent times his construction has begun to be used to study non-holonomic combinatorial generating functions \cite{FGS}. The generating functions for heavy-tailed distributions studied in this paper are of non-holonomic type and our methods of studying them show a new application of Lindel\"of's construction that has novel connections to other areas of analytic asymptotic analysis such as bivariate steepest descent. In future work we hope to build on the present article in a way that broadens the application of harmonic analysis and complex function theory to problems of asymptotic analysis in both probability and combinatorics, such as applying the theory of Hardy spaces and Riemann-Hilbert analysis and extensions of Tauberian theorems as originally envisioned by Paley and Wiener \cite{Feller}.

Our proof shares some features with~\cite{nagaev73}, where  cumulative distribution functions are approximate Laplace transforms and approximating moment generating function admit analytic extensions. Contour integrals that appear in inversion formulas are deformed and analyzed by Gaussian approximation---our proof details in Section~\ref{sec:gaussian} follow~\cite{nagaev73}. There are, however, key differences: we need not deal with approximation errors because of stronger analyticity assumptions, and our detailed analysis of the underlying variational problem allows us to formulate more concrete conditions for our theorems. 

The remainder of this article is organized as follows. In Section~\ref{sec results} we formulate our main results and discuss applications to stretched exponential weights $c\exp(-k^\alpha))$ and weights $c \exp( - (\log x)^\beta)$ with logarithmic hazard functions. In Section~\ref{sec:strategy} we explain the proof strategy in five steps, which are treated in detail in the remaining sections. Steps~1 and~2 concern analytic extensions and notably use the Lindel{\"o}f and Bromwich integrals (Section~\ref{sec:lindeloef}). Steps~4 and~5 analyze the critical points of a bivariate function and deal with the Gaussian approximation to a double integral (Section~\ref{sec:saddlepoint}). The pivotal Step~3 connects the contour integral and the bivariate double integral; it leads to the full proof of our theorems that can be found in Section~\ref{sec:asyana}. 

\section{Results} \label{sec results}

We use the notation $a_n\sim b_n$ if $a_n=(1+o(1))b_n$ and $a_n\ll b_n$ if $a_n = o(b_n)$. 

\subsection{Preliminaries} 

In order to formulate the results, we need to introduce critical sequences deduced from a variational problem and the Cram{\'e}r series. 
Let $X,X_1,X_2,\ldots$ be independent, identically distributed random variables with values in $\N$ and law 
\be
    \P(X=k) = p(k) = \exp(-q(k))\quad (k\in \N)
\ee 
for some sequence $(q(k))_{k\in \N}$. We assume that $X$ is heavy-tailed and has moments of all orders,  i.e., the generating function 
\be
  G(z) = \sum_{k=1}^\infty p(k) z^k \quad (|z| \leq 1)
\ee
has radius of convergence $1$ and  $\E[X^m] = \sum_{k=1}^\infty k^m p(k)< \infty$ for all $m\in\N$. Let $\mu$ and $\sigma^2$ be the expectation and variance of $X$. Set $S_n = X_1+\cdots + X_n$.
We are interested in the asymptotic behavior of $\P(S_n =  \mu n  + N_n)$ when $n,N_n\to \infty$ with $N_n \gg \sqrt{n}$. The following assumption is similar to conditions considered by S.~V.~Nagaev~\cite{nagaev73}. 

\begin{assumption} \label{ass:convexity}
	For some $a>0$, the sequence $(q(k))_{k\in \N\cap (a,\infty)}$ extends to a smooth function $q:(a,\infty) \to \R$ which has the following properties: 
	\begin{enumerate} 
		\item [(i)] $q'> 0$, $q''<0$, and $q'''>0$.
		\item [(ii)] $\lim_{x\to \infty} x q'(x)/(\log x) = \infty$.
		\item [(iii)] $c_1 \frac{q'(x)}{x} \leq |q''(x)| \leq c_2 \frac{q'(x)}{x}$ for some constants $c_1,c_2>0$. 
		\item [(iv)] $c_3 \frac{|q''(x)|}{x} \leq q'''(x) \leq c_4 \frac{|q''(x)|}{x}$ for some constants $c_3, c_4>0$. 
		\item[(v)] $q'(x)\leq \alpha \frac{q(x)}{x}$ for some $\alpha \in (0,1)$.
	\end{enumerate} 
\end{assumption}

Assumption~\ref{ass:convexity} allows for an easy analysis of an auxiliary variational problem, which is essential to the formulation of our main results. 
Let us collect a few elementary consequences. Under Assumption~\ref{ass:convexity}, $q$ is concave on $(a,\infty)$ and $p = \exp(-q)$ is log-convex. Moreover, $\lim_{x\to \infty} x^2 q''(x)/\log x = - \infty$ and for $y >x >a$, 
using 
\be 
	\frac{q'(y)}{q'(x)} = \exp\Bigl( - \int_x^y \frac{|q''(u)|}{q'(u)} \dd u\Bigr),
\ee 
we estimate 
\be \label{eq:qprimevar}
	\Bigl( \frac{y}{x}\Bigr)^{ - c_2} \leq \frac{q'(y)}{q'(x)} \leq \Bigl( \frac{y}{x}\Bigr)^{ - c_1} \leq 1
\ee 
Similarly, for $y>x>a$, 
\be  \label{eq:qsecvar}
	\Bigl( \frac{y}{x}\Bigr)^{ c_3} \leq \frac{q''(y)}{q''(x)} \leq \Bigl( \frac{y}{x}\Bigr)^{ c_4}. 
\ee
Since $G(z) =\sum_k z^k \exp(- q(k))$ has radius of convergence $1$, we also know that
\be \label{eq:qprimetozero}
	\lim_{x\to \infty} q'(x)= \lim_{x\to \infty} q''(x)  = \lim_{x\to \infty} q'''(x)= 0. 
\ee
Indeed by Assumption~\ref{ass:convexity}, $q'$ is eventually decreasing and the limit $\ell:= \lim_{x\infty} q'(x)$ exists in $\R \cup \{-\infty\}$. Then $\ell = \lim_{x\to \infty} q(x)/x$ and $G(z)$ has radius of convergence $\exp(\ell) =1$, whence $\ell =0$. Assumption~\ref{ass:convexity}(iii) and~(iv) leads to the statements on higher order derivatives. Assumption~\ref{ass:convexity}(v) implies $q(x)= O(x^\alpha)$ as $x\to \infty$.

Our method of proof requires two more analyticity assumptions.

\begin{assumption} \label{ass:analyticity}
	There exists $b\geq 0$ such that $(p(n))_{n\in \N\cap [b,\infty)}$ 
 extends to a function $p(\zeta)$ that is 
	continuous on a closed half-plane $\Re \zeta \geq b$, analytic on the open half-plane $\Re \zeta> b$, and in addition satisfies 
	\begin{enumerate}
		\item [(i)] For every $\eps\in (0,\pi)$, some $C_\eps>0$, and all $\zeta$, we have $|p(\zeta)| \leq C_\eps \exp(\eps |\zeta|)$. 
		\item [(ii)] $\int_{-\infty}^\infty |(b+\mathrm{i}s)^k p(b+ \mathrm{i} s)|\dd s <\infty$ for all $k\in \N$.
	\end{enumerate}
	Moreover $p(x) = \exp(-q(x))$ for all $x\geq \max(a,b)$ with $a$, $q(x)$ as in Assumption~\ref{ass:convexity}. 
\end{assumption} 

\begin{assumption} \label{ass:analyticity2}
	Let $p(\zeta) = \exp( - q(\zeta))$ be the analytic extension from Assumption~\ref{ass:analyticity}, defined in $\Re \zeta \geq b$. Then $q(\zeta) = - \mathrm{Log} \zeta$, defined with the principal branch of the logarithm is analytic as well, and the following holds: 
	\begin{enumerate}
		\item [(i)] For $r>0$ large, let $z_r = b + \ii \sqrt{r^2 - b^2}$. Then as $r\to \infty$,
		$$\left|\int_{\Re \zeta = b,\, |\zeta| \geq r}	\exp(- \Re q(\zeta)) \dd \zeta \right| \leq \exp( - \Re q(z_r ) + O(\log r)).$$
		\item [(ii)] $\Im (\zeta q'(\zeta)) \leq \Im (\zeta q'(r))$ for all large $r$ and all  $\zeta$ with $\Im \zeta \geq 0$ and $|\zeta|=r$. 
		\item [(iii)] $|q'''(\zeta)|\leq C |q''(\zeta)/\zeta|$ for some $C>0$ and all $\zeta$. 
	\end{enumerate}
\end{assumption}
\noindent Assumption~\ref{ass:analyticity2} enters the proof of Theorem~\ref{thm:imsmall} only. 


\subsubsection*{Variational problem and critical scale} 
Assumption~\ref{ass:convexity} is tailored to the analysis of an auxiliary variational problem (see also~\cite{nagaev, rozovskii}), motivated by the following heuristics.  
For subexponential random variables, the typical large deviations behavior is realized by making one out of the $n$ variables large,
\be \label{eq:heuristics1}
	\P(S_n = \mu n + N_n )\approx n \P(X_n=N_n - k_n) \P(S_{n-1} = \mu n + k_n )
\ee
with a yet to be determined optimal $k_n$. Assuming that a normal approximation for the second factor is justified, we get
\be \label{eq:heuristics2}
	\P(S_n = \mu n + N_n)\approx \exp\Bigl( - q(N_n -k_n) - \frac{k_n^2}{2 n\sigma^2}\Bigr)
\ee
where we have neglected prefactors $n$ and $1/\sqrt{2 \pi n \sigma^ 2}$ (see Eq.~\eqref{eq:refined-heuristics} below for a more refined heuristics). The optimal $k_n$ is then determined by minimizing the term in the exponential. Thus we are led to the minimization of 
\be
	 f_n(x) = q(x) + \frac{(N_n-x)^2}{2n \sigma^2}. 
\ee
As illustrated in Figure~\ref{fig variational}, the nature of the variational problem changes with $N_n$. Define $x_n^*>0$ and $N_n^*$ by 
\be \label{eq:xnstar}
	q''(x_n^*) = - \frac{1}{n\sigma^2},\quad N_n^* = x_n^* + n\sigma^2 q'(x_n^*). 
\ee
For sufficiently large $n$, the inflection point $x_n^*$ is uniquely defined because of the monotonicity from Assumption~\ref{ass:convexity} and Eq.~\eqref{eq:qprimetozero}, moreover $x_n^*\to \infty$. The quantity $N_n^*$ is defined in such a way that the tangent to the curve $y = q\rq{}(x)$ at $x=x_n^*$ has equation $y = (N_n^*- x)/(n\sigma^2)$, see Fig.~\ref{fig critical}. 

The next two lemmas characterize the minimization of $f_n$; they are proven in Section~\ref{sec:variational}. The first lemma relates the critical points of $f_n$ to the location of $N_n$ compared to $N_n^*$. 
  
\begin{lemma} \label{lem:avar}
	For sufficiently large $n$, the following holds true:
	\begin{enumerate} 
		\item [(a)] If $N_n < N_n^*$, then $f_n'>0$ on $(a,\infty)$. 
		\item [(b)] If $N_n = N_n^*$, then $f'_n$ has the unique zero $x_n^*$, moreover $f'_n(x)\geq 0$ with equality if and only if $x =  x_n^*$. 
		\item [(c)] If $N_n > N_n^*$ and $\limsup_{n\to\infty}N_n/(n \sigma^2) < \lim_{x\searrow a} q'(x)$, then $f_n$ has exactly two critical points $x_n$ and $x'_n$, which satisfy $x'_n < x_n^*< x_n<N_n$ and 
	$$ f_n(x'_n)= \max_{(a,x_n^*)}f_n, \quad f_n(x_n)= \min_{(x_n^*,\infty)} f_n.$$
		\item [(d)] If $N_n >N_n^*$ and $\liminf_{n\to\infty}N_n/(n \sigma^2) > \lim_{x\searrow a} q'(x)$, then $f_n$ has a unique critical point $x_n$. It satisfies $x_n\in (x_n^*, N_n)$ and is a global minimizer. 
		\end{enumerate} 
\end{lemma}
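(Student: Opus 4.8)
The plan is to reduce the whole statement to the monotonicity of a single auxiliary function. I would set $f_n'(x) = q'(x) - \tfrac{N_n-x}{n\sigma^2} =: h_n(x)$, so that the critical points of $f_n$ on $(a,\infty)$ are exactly the zeros of $h_n$. The point is that $h_n'(x) = q''(x) + \tfrac{1}{n\sigma^2}$ vanishes at precisely one point, namely $x = x_n^*$: by Assumption~\ref{ass:convexity}(i) the function $q''$ is strictly increasing, by~\eqref{eq:qprimetozero} it tends to $0$, and by definition~\eqref{eq:xnstar} it attains the value $-\tfrac{1}{n\sigma^2}$ for $n$ large; hence $h_n$ is strictly decreasing on $(a,x_n^*)$, strictly increasing on $(x_n^*,\infty)$, with global minimum at $x_n^*$. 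A one-line computation using $N_n^* = x_n^* + n\sigma^2 q'(x_n^*)$ gives $h_n(x_n^*) = \tfrac{N_n^* - N_n}{n\sigma^2}$, so the sign of $h_n(x_n^*)$ is exactly that of $N_n^* - N_n$. Geometrically this is the statement that the convex curve $y=q'(x)$ lies above its tangent line at $x_n^*$, which is the line $y = (N_n^* - x)/(n\sigma^2)$ through $(N_n^*,0)$.

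Parts (a) and (b) would then be immediate: if $N_n < N_n^*$ then $h_n \ge h_n(x_n^*) > 0$ on all of $(a,\infty)$; if $N_n = N_n^*$ then $h_n \ge 0$ with equality only at $x_n^*$, by strict convexity of $q'$. For (c) and (d) we have $h_n(x_n^*) < 0$, and the count of zeros is governed by the boundary behaviour of $h_n$. As $x\to\infty$ we have $q'(x)\to 0$ while $(x-N_n)/(n\sigma^2)\to\infty$, so $h_n(x)\to +\infty$; since $h_n$ is strictly increasing on $(x_n^*,\infty)$ this produces exactly one zero $x_n$ there, and because $h_n(N_n) = q'(N_n) > 0$ (legitimate since $N_n > a$ for $n$ large) together with $x_n^* < N_n^* < N_n$, we get $x_n \in (x_n^*,N_n)$. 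At the left end, $\lim_{x\searrow a} h_n(x) = \lim_{x\searrow a} q'(x) - \tfrac{N_n - a}{n\sigma^2}$; the hypothesis $\limsup N_n/(n\sigma^2) < \lim_{x\searrow a} q'(x)$ of (c) makes this limit strictly positive for all $n$ large, so strict monotonicity of $h_n$ on $(a,x_n^*)$ yields exactly one further zero $x_n' \in (a,x_n^*)$; the hypothesis $\liminf N_n/(n\sigma^2) > \lim_{x\searrow a} q'(x)$ of (d) makes the same limit strictly negative for $n$ large, so $h_n < 0$ throughout $(a,x_n^*)$ and $x_n$ is the only critical point.

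The last step is to read off the sign pattern of $f_n' = h_n$. In case (d), $f_n' < 0$ on $(a,x_n)$ and $f_n' > 0$ on $(x_n,\infty)$, so $x_n$ is the global minimizer. In case (c), $f_n' > 0$ on $(a,x_n')$, $f_n' < 0$ on $(x_n',x_n)$, $f_n' > 0$ on $(x_n,\infty)$; thus $f_n$ increases, then decreases, then increases, so $x_n'$ is a local maximizer and $x_n$ a local minimizer, and restricting this unimodal behaviour to $(a,x_n^*)$ and to $(x_n^*,\infty)$ yields $f_n(x_n') = \max_{(a,x_n^*)} f_n$ and $f_n(x_n) = \min_{(x_n^*,\infty)} f_n$ respectively.

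The computations themselves are elementary; the one place that needs care, and that I expect to be the main obstacle, is tracking the several "for $n$ sufficiently large" clauses uniformly. Specifically one should record: that $x_n^*$ exists and is unique (from the monotonicity of $q''$ and~\eqref{eq:qprimetozero}); that $N_n \to \infty$ forces $N_n > a$; that $q'(x_n^*) > 0$ gives $x_n^* < N_n^* < N_n$ in cases (c)--(d); and that the $\limsup$/$\liminf$ hypotheses translate into a definite sign of $\lim_{x\searrow a} h_n(x)$ holding simultaneously for all large $n$ (when $\lim_{x\searrow a}q'(x)$ is finite this uses a uniform gap $\delta>0$ and absorbs the vanishing term $a/(n\sigma^2)$; when it is $+\infty$ the sign in (c) is automatic and (d) is vacuous). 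This is mild, but worth spelling out.
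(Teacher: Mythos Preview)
Your proposal is correct and follows essentially the same approach as the paper: both arguments hinge on the observation that $f_n'' = q'' + (n\sigma^2)^{-1}$ changes sign exactly once at $x_n^*$, so $f_n' = h_n$ is unimodal with minimum value $(N_n^*-N_n)/(n\sigma^2)$, and the count of zeros is then determined by boundary behaviour. The paper phrases the key inequality via the tangent-line bound for the convex function $q'$, while you phrase it via the sign of $h_n'$; these are equivalent, and your explicit bookkeeping of the ``for $n$ sufficiently large'' clauses is a welcome addition.
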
 
\noindent 
For $N_n>N_n^*$, the function $f_n$ may have two local minimizers: $a$ and $x_n$, and we may wonder which one is the global minimizer. The answer depends on the location of $N_n$ compared to a new critical sequence $N_n^{**}$. Concrete examples are given in Sections~\ref{sec:stretched} and~\ref{sec:logarithmic}. 

\begin{lemma} \label{lem:breakeven}
	For $n$ sufficiently large, there is a uniquely defined $N_n^{**} >N_n^*$ such that:
	\begin{enumerate} 
		\item [(a)] If $N_n^*< N_n < N_n^{**}$, then $f_n(a)<f_n(x_n)$. 
		\item [(b)] If $N_n = N_n^{**}$, then $f_n(a)=f_n(x_n)$. 
		\item [(c)] If $N_n>N_n^{**}$, then $f_n(x_n)< f_n(a)$. 
	\end{enumerate}
\end{lemma}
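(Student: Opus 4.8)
The plan is to fix a large $n$ and track, as a function of the displacement $N$, the quantity
$$\Phi_n(N) := f_n(a) - f_n\bigl(x_n(N)\bigr), \qquad N \ge N_n^*,$$
where $x_n(N)$ denotes the minimizer of $f_n$ on $(x_n^*,\infty)$ furnished by Lemma~\ref{lem:avar}(b)--(d), with the convention $x_n(N_n^*) = x_n^*$. Since for $N > N_n^*$ the two candidates for the global minimizer of $f_n$ on $[a,\infty)$ are the endpoint $a$ and the point $x_n$ (in regime (c) of Lemma~\ref{lem:avar} the third critical point $x_n'$ is a local \emph{max}, in regime (d) there is nothing else), the assertions (a)--(c) are precisely the statement that $\Phi_n$ changes sign, from negative to positive, at a single point $N_n^{**}$. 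So it suffices to show that $\Phi_n$ is strictly increasing on $[N_n^*,\infty)$, is negative at $N_n^*$, and tends to $+\infty$.

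The heart of the matter is the monotonicity, which comes from an envelope-theorem computation. First I would note that $x_n(N)$ depends smoothly on $N$ for $N > N_n^*$: the critical-point equation $q'(x_n) = (N - x_n)/(n\sigma^2)$ can be solved by the implicit function theorem because $f_n''(x_n) = q''(x_n) + 1/(n\sigma^2) > 0$ (as $x_n > x_n^*$ and $q''$ is increasing by Assumption~\ref{ass:convexity}(i)), and continuity at $N = N_n^*$ follows from Lemma~\ref{lem:avar}. Then, since $\partial_x f_n(x_n) = 0$,
$$\frac{\dd}{\dd N}\, f_n(x_n(N)) = \partial_N f_n(x_n(N)) = \frac{N - x_n}{n\sigma^2} = q'(x_n), \qquad \frac{\dd}{\dd N}\, f_n(a) = \frac{N - a}{n\sigma^2},$$
and writing $N - a = (N - x_n) + (x_n - a) = n\sigma^2 q'(x_n) + (x_n - a)$ gives
$$\Phi_n'(N) = \frac{N - a}{n\sigma^2} - q'(x_n) = \frac{x_n - a}{n\sigma^2} > 0.$$
Thus $\Phi_n$ is strictly increasing on $[N_n^*,\infty)$.

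It remains to locate the two ends. At $N = N_n^*$, Lemma~\ref{lem:avar}(b) gives $f_n' \ge 0$ on $(a,\infty)$ with a single zero, so $f_n$ is strictly increasing on $(a, x_n^*]$ and $\Phi_n(N_n^*) = f_n(a) - f_n(x_n^*) < 0$. As $N \to \infty$ with $n$ fixed, one has $x_n(N) \to \infty$ (otherwise $N - x_n = n\sigma^2 q'(x_n)$ stays bounded while $x_n$ stays bounded, contradicting $N \to \infty$), hence $q'(x_n) \to 0$ by Eq.~\eqref{eq:qprimetozero}; using $x_n < N$ and $q(x) = O(x^\alpha)$ with $\alpha \in (0,1)$ from Assumption~\ref{ass:convexity}(v),
$$f_n(x_n(N)) = q(x_n) + \tfrac{1}{2} n\sigma^2 q'(x_n)^2 = O(N^\alpha) = o(N^2),$$
whereas $f_n(a) \sim N^2/(2n\sigma^2) \to \infty$; so $\Phi_n(N) \to +\infty$. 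By continuity and strict monotonicity, $\Phi_n$ has a unique zero $N_n^{**} \in (N_n^*,\infty)$, negative on $(N_n^*, N_n^{**})$ and positive on $(N_n^{**},\infty)$, which is exactly (a)--(c). The only point requiring mild care is that as $N$ grows the variational problem may pass from regime (c) to regime (d) of Lemma~\ref{lem:avar}; but the minimizer $x_n(N)$ on $(x_n^*,\infty)$---the sole object used above---is defined and smooth throughout, and a quick check (at the transition $x_n' \searrow a$ one has $f_n(a) = f_n(x_n') \ge f_n(x_n^*) > f_n(x_n)$, i.e. $\Phi_n > 0$ already) shows the sign change occurs while still in regime (c), so nothing degenerates. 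I do not expect a genuine obstacle; the real content is the one-line identity $\Phi_n'(N) = (x_n - a)/(n\sigma^2) > 0$.
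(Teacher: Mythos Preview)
Your proof is correct and follows essentially the same approach as the paper: both track the difference $f_n(a)-f_n(x_n(N))$ (the paper uses the opposite sign and writes $I_n(x,y)$ to make the $N$-dependence explicit), compute its derivative via the envelope relation $\partial_x f_n(x_n)=0$ to obtain the monotonicity $\Phi_n'(N)=(x_n-a)/(n\sigma^2)>0$, and then check the signs at $N=N_n^*$ and as $N\to\infty$. Your treatment is slightly more explicit about the transition between regimes (c) and (d) of Lemma~\ref{lem:avar} and keeps general $a$, while the paper specializes to $a=0$ and bounds the large-$N$ end via $f_n(x_n)\le f_n(N_n)=q(N_n)$.
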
 

\noindent In general it may not be straightforward to determine $N_n^{**}$ exactly, but it is simple to find a lower bound: if $a=0$ and $q(N_n) < N_n^2/(2n \sigma^2)$, then $N_n >N_n^{**}$. This lower bound corresponds, roughly, to the sequence $\Lambda(n)$ in~\cite{nagaev73}. 

 The sequences introduced up to now are ordered as follows. 

\begin{lemma} \label{lem:critsequences}
	As $n\to \infty$, we have 
	$$\sqrt{n} \ll x_n^* <N_n^* <N_n^{**} =O(n^{1/(2-\alpha)})\ll n,$$
	and for some constants $C,\delta >0$
	$$ (1+\delta )x_n^* \leq N_n ^* \leq C x_n ^*. $$
\end{lemma}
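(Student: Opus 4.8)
The plan is to read off every claim from the two defining relations $|q''(x_n^*)| = 1/(n\sigma^2)$ and $N_n^* = x_n^* + n\sigma^2 q'(x_n^*)$ of~\eqref{eq:xnstar}, together with the one-sided bounds packaged in Assumption~\ref{ass:convexity}. First I would record the size estimates: writing $N_n^* - x_n^* = n\sigma^2 q'(x_n^*)$ and inserting Assumption~\ref{ass:convexity}(iii) at $x_n^*$ — which reads $c_1 q'(x_n^*)/x_n^* \le 1/(n\sigma^2) \le c_2 q'(x_n^*)/x_n^*$ — gives $x_n^*/c_2 \le N_n^* - x_n^* \le x_n^*/c_1$, hence $(1+1/c_2)x_n^* \le N_n^* \le (1+1/c_1)x_n^*$ for $n$ large enough that $x_n^* > a$; in particular $x_n^* < N_n^*$, and the constants $\delta = 1/c_2$, $C = 1+1/c_1$ do the job. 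Moreover Assumption~\ref{ass:convexity}(v) gives $q(x) = O(x^\alpha)$ (noted above), so (v) again yields $q'(x) = O(x^{\alpha-1})$ and (iii) yields $|q''(x)| = O(x^{\alpha-2})$; evaluating the last bound at $x_n^*$ gives $(x_n^*)^{2-\alpha} = O(n)$, i.e. $x_n^* = O(n^{1/(2-\alpha)})$ and hence $N_n^* = O(n^{1/(2-\alpha)})$.

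For $\sqrt n \ll x_n^*$ I would use that $q^{(3)}>0$ makes $|q''|$ eventually decreasing (it tends to $0$ by~\eqref{eq:qprimetozero}). Fix $c>0$. For $n$ large, $c\sqrt n > a$ and Assumption~\ref{ass:convexity}(ii) gives $q'(c\sqrt n)\ge \log(c\sqrt n)/(c\sqrt n)$, so by (iii), $|q''(c\sqrt n)| \ge c_1\log(c\sqrt n)/(c^2 n)$. If $x_n^* \le c\sqrt n$, monotonicity forces $1/(n\sigma^2) = |q''(x_n^*)| \ge |q''(c\sqrt n)| \ge c_1\log(c\sqrt n)/(c^2 n)$, i.e. $c^2 \ge c_1\sigma^2\log(c\sqrt n)$, which fails for $n$ large. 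Hence $x_n^* > c\sqrt n$ eventually; as $c$ is arbitrary, $x_n^*/\sqrt n \to \infty$. I expect this to be the main obstacle, since it is the only place the logarithmic lower bound (ii) is genuinely used and one must be careful about which estimates hold only eventually.

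Finally, $N_n^* < N_n^{**}$ is part of Lemma~\ref{lem:breakeven}, and $N_n^{**} \ll n$ follows from $N_n^{**} = O(n^{1/(2-\alpha)})$ because $\alpha\in(0,1)$ forces $1/(2-\alpha)<1$. For this last bound I would exploit Lemma~\ref{lem:breakeven}(c): it suffices to produce a sequence $N = \lceil\lambda n^{1/(2-\alpha)}\rceil$, with $\lambda$ a large constant, for which $f_n(x_n) < f_n(a)$ when the defining sequence is taken to be $N$. Since $N/(n\sigma^2)\to 0 < \lim_{x\searrow a}q'(x)$ and, for $\lambda$ large, $N > N_n^*$, Lemma~\ref{lem:avar}(c) applies, so $f_n(x_n) = \min_{(x_n^*,\infty)} f_n \le f_n(N) = q(N) = O(\lambda^\alpha n^{\alpha/(2-\alpha)})$, whereas $f_n(a) \ge q(a^+) + (N-a)^2/(2n\sigma^2)$ with $(N-a)^2/(2n\sigma^2) = \Theta(\lambda^2 n^{\alpha/(2-\alpha)})$. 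Choosing $\lambda$ so that $\lambda^2/\sigma^2$ dominates the constant in the $O(N^\alpha)$ bound makes $f_n(a) - f_n(x_n) \to +\infty$, hence $f_n(x_n) < f_n(a)$ for $n$ large, and Lemma~\ref{lem:breakeven}(c) then gives $N_n^{**} < N = O(n^{1/(2-\alpha)})$. The one point needing care is the meaning of $f_n(a)$ when $a>0$: as in Lemma~\ref{lem:breakeven} I read it as $\lim_{x\searrow a} f_n(x) = q(a^+) + (N-a)^2/(2n\sigma^2)$, where $q(a^+) := \lim_{x\searrow a} q(x)$ is finite (this is implicit in Lemma~\ref{lem:breakeven}; if it is $+\infty$ the inequality is trivial).
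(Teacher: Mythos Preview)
Your proof is correct and follows essentially the same route as the paper: both derive $\sqrt{n}\ll x_n^*$ from Assumption~\ref{ass:convexity}(ii)--(iii) (the paper phrases this via the already-noted consequence $x^2|q''(x)|\to\infty$, which you rederive inline), and both obtain $N_n^{**}=O(n^{1/(2-\alpha)})$ by exhibiting a test sequence $N\asymp n^{1/(2-\alpha)}$ with $f_n(N)=q(N)<f_n(a)$ and invoking Lemma~\ref{lem:breakeven}. The one genuine difference is your treatment of $(1+\delta)x_n^*\le N_n^*\le Cx_n^*$: you plug Assumption~\ref{ass:convexity}(iii) directly into $N_n^*-x_n^*=n\sigma^2 q'(x_n^*)$ to get explicit constants $\delta=1/c_2$, $C=1+1/c_1$, whereas the paper argues by contradiction along subsequences (ruling out $N_n^*/x_n^*\to1$ and $\to\infty$); your argument is shorter and yields explicit constants, the paper's is slightly more robust in that it only uses that the ratio $|q''|/(q'/x)$ is bounded above and below without naming $c_1,c_2$.
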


\noindent The lemma is proven in Section~\ref{sec:variational}. Lemmas~\ref{lem:avar} and~\ref{lem:breakeven}, together with the heuristics described above, suggest that for $N_n > N_n^{**}$ the unlikely event $S_n = n \mu + N_n$ is realized by making one component of the order of $x_n$. One may wonder how far $x_n$ is from swallowing all of the overshoot $N_n$. 

\begin{centering}
\begin{figure}[htb]
\begin{picture}(0,0)%
\includegraphics{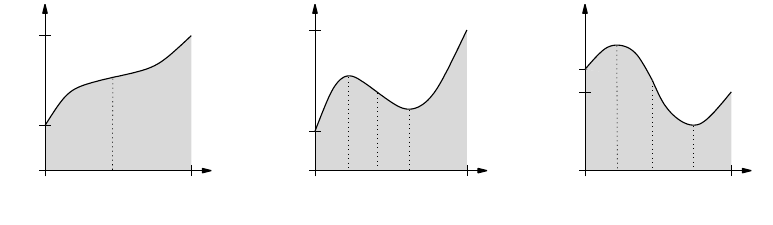}%
\end{picture}%
\setlength{\unitlength}{2368sp}%
\begingroup\makeatletter\ifx\SetFigFont\undefined%
\gdef\SetFigFont#1#2#3#4#5{%
  \reset@font\fontsize{#1}{#2pt}%
  \fontfamily{#3}\fontseries{#4}\fontshape{#5}%
  \selectfont}%
\fi\endgroup%
\begin{picture}(10296,2986)(1,-4685)
\put(1351,-4186){\makebox(0,0)[lb]{\smash{{\SetFigFont{5}{6.0}{\rmdefault}{\mddefault}{\updefault}{\color[rgb]{0,0,0}$x_n^*$}%
}}}}
\put(4276,-1861){\makebox(0,0)[lb]{\smash{{\SetFigFont{5}{6.0}{\rmdefault}{\mddefault}{\updefault}{\color[rgb]{0,0,0}$f_n(x)$}%
}}}}
\put(7876,-1861){\makebox(0,0)[lb]{\smash{{\SetFigFont{5}{6.0}{\rmdefault}{\mddefault}{\updefault}{\color[rgb]{0,0,0}$f_n(x)$}%
}}}}
\put(6526,-4111){\makebox(0,0)[lb]{\smash{{\SetFigFont{6}{7.2}{\rmdefault}{\mddefault}{\updefault}{\color[rgb]{0,0,0}$x$}%
}}}}
\put(2851,-4111){\makebox(0,0)[lb]{\smash{{\SetFigFont{6}{7.2}{\rmdefault}{\mddefault}{\updefault}{\color[rgb]{0,0,0}$x$}%
}}}}
\put(10051,-4111){\makebox(0,0)[lb]{\smash{{\SetFigFont{6}{7.2}{\rmdefault}{\mddefault}{\updefault}{\color[rgb]{0,0,0}$x$}%
}}}}
\put(  1,-3436){\makebox(0,0)[lb]{\smash{{\SetFigFont{5}{6.0}{\rmdefault}{\mddefault}{\updefault}{\color[rgb]{0,0,0}$\frac{N_n^2}{2n\sigma^2}$}%
}}}}
\put(2476,-4186){\makebox(0,0)[lb]{\smash{{\SetFigFont{5}{6.0}{\rmdefault}{\mddefault}{\updefault}{\color[rgb]{0,0,0}$N_n$}%
}}}}
\put(6151,-4186){\makebox(0,0)[lb]{\smash{{\SetFigFont{5}{6.0}{\rmdefault}{\mddefault}{\updefault}{\color[rgb]{0,0,0}$N_n$}%
}}}}
\put(9676,-4186){\makebox(0,0)[lb]{\smash{{\SetFigFont{5}{6.0}{\rmdefault}{\mddefault}{\updefault}{\color[rgb]{0,0,0}$N_n$}%
}}}}
\put(4576,-4186){\makebox(0,0)[lb]{\smash{{\SetFigFont{5}{6.0}{\rmdefault}{\mddefault}{\updefault}{\color[rgb]{0,0,0}$x_n'$}%
}}}}
\put(4951,-4186){\makebox(0,0)[lb]{\smash{{\SetFigFont{5}{6.0}{\rmdefault}{\mddefault}{\updefault}{\color[rgb]{0,0,0}$x_n^*$}%
}}}}
\put(5401,-4186){\makebox(0,0)[lb]{\smash{{\SetFigFont{5}{6.0}{\rmdefault}{\mddefault}{\updefault}{\color[rgb]{0,0,0}$x_n$}%
}}}}
\put(3601,-3511){\makebox(0,0)[lb]{\smash{{\SetFigFont{5}{6.0}{\rmdefault}{\mddefault}{\updefault}{\color[rgb]{0,0,0}$\frac{N_n^2}{2n\sigma^2}$}%
}}}}
\put(7201,-2611){\makebox(0,0)[lb]{\smash{{\SetFigFont{5}{6.0}{\rmdefault}{\mddefault}{\updefault}{\color[rgb]{0,0,0}$\frac{N_n^2}{2n\sigma^2}$}%
}}}}
\put(8101,-4186){\makebox(0,0)[lb]{\smash{{\SetFigFont{5}{6.0}{\rmdefault}{\mddefault}{\updefault}{\color[rgb]{0,0,0}$x_n'$}%
}}}}
\put(8626,-4186){\makebox(0,0)[lb]{\smash{{\SetFigFont{5}{6.0}{\rmdefault}{\mddefault}{\updefault}{\color[rgb]{0,0,0}$x_n^*$}%
}}}}
\put(9151,-4186){\makebox(0,0)[lb]{\smash{{\SetFigFont{5}{6.0}{\rmdefault}{\mddefault}{\updefault}{\color[rgb]{0,0,0}$x_n$}%
}}}}
\put(7201,-2986){\makebox(0,0)[lb]{\smash{{\SetFigFont{5}{6.0}{\rmdefault}{\mddefault}{\updefault}{\color[rgb]{0,0,0}$q(N_n)$}%
}}}}
\put(3601,-2086){\makebox(0,0)[lb]{\smash{{\SetFigFont{5}{6.0}{\rmdefault}{\mddefault}{\updefault}{\color[rgb]{0,0,0}$q(N_n)$}%
}}}}
\put(8401,-4636){\makebox(0,0)[lb]{\smash{{\SetFigFont{7}{8.4}{\rmdefault}{\mddefault}{\updefault}{\color[rgb]{0,0,0}(c)  $N_n > N_n^{**}$}%
}}}}
\put(4501,-4636){\makebox(0,0)[lb]{\smash{{\SetFigFont{7}{8.4}{\rmdefault}{\mddefault}{\updefault}{\color[rgb]{0,0,0}(b)  $N_n^* < N_n < N_n^{**}$}%
}}}}
\put(1051,-4636){\makebox(0,0)[lb]{\smash{{\SetFigFont{7}{8.4}{\rmdefault}{\mddefault}{\updefault}{\color[rgb]{0,0,0}(a)  $N_n < N_n^*$}%
}}}}
\put(  1,-2161){\makebox(0,0)[lb]{\smash{{\SetFigFont{5}{6.0}{\rmdefault}{\mddefault}{\updefault}{\color[rgb]{0,0,0}$q(N_n)$}%
}}}}
\put(676,-1861){\makebox(0,0)[lb]{\smash{{\SetFigFont{5}{6.0}{\rmdefault}{\mddefault}{\updefault}{\color[rgb]{0,0,0}$f_n(x)$}%
}}}}
\end{picture}%
\caption{Minimization of $f_n(x) = q(x)+ (N_n - x)^2/(2 n\sigma^2)$ and illustration of Lemmas~\ref{lem:avar} and~\ref{lem:breakeven} for weights $q:(0,\infty)\to \R$ with $q(0)=0$. 
For $N_n>N_n^{*}$, $f_n(x)$ has two critical points $x'_n$ and $x_n$ separated by an inflection point $x_n^*$. The global minimum is reached either at $x=x_n$ or at $x=0$.}
\label{fig variational}
\end{figure}
\end{centering}

\begin{lemma} \label{lem:crit-magnitudes}
	Suppose Assumption~\ref{ass:convexity}(i) holds true. 
	Let $N_n >N_n^*$. Then 
	\begin{equation*}
	N_n - N_n^*\leq x_n\leq N_n,\quad 
	n\sigma^2 f''_n(x_n) = 1 - n\sigma^2 |q''(x_n) | = 1+O\Bigl(\frac{N_n^*}{N_n}\Bigr). 
	\end{equation*}
\end{lemma}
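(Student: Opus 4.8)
The plan is to prove the lemma purely from the first-order condition characterizing $x_n$ together with the definitions in~\eqref{eq:xnstar}; only Assumption~\ref{ass:convexity}(i) enters. Write $f_n'(x)=q'(x)-(N_n-x)/(n\sigma^2)$ and $f_n''(x)=q''(x)+1/(n\sigma^2)$. By Lemma~\ref{lem:avar} the relevant critical point $x_n$ exists, lies in $(x_n^*,N_n)$, and satisfies $f_n'(x_n)=0$, i.e.\
$$ N_n-x_n=n\sigma^2 q'(x_n). $$
Evaluating $f_n''$ at $x_n$ and using $q''<0$ gives $n\sigma^2 f_n''(x_n)=1+n\sigma^2 q''(x_n)=1-n\sigma^2|q''(x_n)|$, which is the displayed identity; it remains to establish the two-sided bound on $x_n$ and the estimate $n\sigma^2|q''(x_n)|=O(N_n^*/N_n)$.

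For the bounds on $x_n$: the inequality $x_n\le N_n$ is immediate from $N_n-x_n=n\sigma^2 q'(x_n)>0$. For $x_n\ge N_n-N_n^*$, note that $q''<0$ makes $q'$ strictly decreasing, so $x_n>x_n^*$ gives $q'(x_n)<q'(x_n^*)$; combined with $N_n^*-x_n^*=n\sigma^2 q'(x_n^*)$ from~\eqref{eq:xnstar} this yields $N_n-x_n=n\sigma^2 q'(x_n)<N_n^*-x_n^*\le N_n^*$.

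For the remainder estimate I would bound $|q''(x_n)|$ by a difference quotient of $q'$. Since $q^{(3)}>0$, $|q''|$ is decreasing, so
$$ q'(x_n^*)-q'(x_n)=\int_{x_n^*}^{x_n}|q''(u)|\,\dd u\ge (x_n-x_n^*)\,|q''(x_n)|. $$
Multiplying by $n\sigma^2$ and substituting $n\sigma^2 q'(x_n^*)=N_n^*-x_n^*$ and $n\sigma^2 q'(x_n)=N_n-x_n$ turns the left side into $(x_n-x_n^*)-(N_n-N_n^*)$, hence
$$ n\sigma^2|q''(x_n)|\le 1-\frac{N_n-N_n^*}{x_n-x_n^*}<1-\frac{N_n-N_n^*}{N_n-x_n^*}=\frac{N_n^*-x_n^*}{N_n-x_n^*}\le\frac{N_n^*}{N_n}, $$
where the strict inequality uses $x_n<N_n$ and the last step is just $(N_n^*-x_n^*)N_n\le N_n^*(N_n-x_n^*)\Leftrightarrow N_n\ge N_n^*$ (using $x_n^*>0$). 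Thus $0\le n\sigma^2|q''(x_n)|\le N_n^*/N_n$, which is the claimed $1+O(N_n^*/N_n)$.

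I do not expect a genuine obstacle; the proof is a short chain of substitutions into $f_n'(x_n)=0$ and the defining relations~\eqref{eq:xnstar}. The one mildly non-obvious move is to estimate $|q''(x_n)|$ through the difference quotient $(q'(x_n^*)-q'(x_n))/(x_n-x_n^*)$ rather than pointwise, since it is this quantity that couples cleanly to $N_n^*-x_n^*$ and $N_n-x_n$; after that the bound $N_n^*/N_n$ drops out from $N_n\ge N_n^*$ alone. One should also be a little careful to cite Lemma~\ref{lem:avar} for the existence of $x_n$ and for $x_n\in(x_n^*,N_n)$, which is what makes all the strict inequalities above available.
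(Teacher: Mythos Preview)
Your proof is correct and follows essentially the same approach as the paper: both use the first-order condition $N_n-x_n=n\sigma^2 q'(x_n)$ together with monotonicity of $q'$ to get the bounds on $x_n$, and then bound $|q''(x_n)|$ by the difference quotient $(q'(x_n^*)-q'(x_n))/(x_n-x_n^*)$ via convexity of $q'$ (equivalently, $|q''|$ decreasing). Your chain of inequalities is in fact a bit sharper, yielding the explicit constant $n\sigma^2|q''(x_n)|\le N_n^*/N_n$ rather than the paper's $O(N_n^*/N_n)$.
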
 
\noindent In particular, for $N_n\gg N_n^*$, we have $x_n \sim N_n$ and $n\sigma^2 f''(x_n) \to 1$. The lemma is proven in Section~\ref{sec:variational}. The information on the second derivative enters a refined heuristics: we  make the ansatz that conditional on the unlikely event $S_n = n\mu +N_n$, there is one large component of size $x_n$, but the size is not deterministic. Instead, there are fluctuations around $x_n$. This yields 
\begin{align}
	\P(S_n = n \mu + N_n) & \approx n \sum_{\ell} \P(X_1 = x_n  + \ell) \P(S_{n-1} = N_n - x_n - \ell) \notag \\
		& \approx n \sum_\ell \frac{\exp(- f_n(x_n + \ell))}{\sqrt{2\pi n \sigma^2}}  
	 \approx n  \e{- f_n(x_n)} \sum_\ell \frac{\exp(- f''_n(x_n) \ell^2 /2)}{\sqrt{2\pi n \sigma^2}}\notag \\
		 &\approx
		\frac{n\exp( - f_n(x_n))}{\sqrt{1- n \sigma^2 |q''(x_n)|}}. \label{eq:refined-heuristics}
\end{align}
Theorem~\ref{thm:large} below confirms the heuristics for large $N_n$, up to correction terms both in the prefactor and in the exponential.

\subsubsection*{Cumulants and Cram{\'e}r series}

The heuristics together with Lemma~\ref{lem:crit-magnitudes} suggest that the optimal $k_n=N_n - x_n$ in Eq.~\eqref{eq:heuristics2} is of order up to $N_n^*\gg \sqrt{n}$. At this scale the normal approximation fails and requires  correction terms. The latter are usually expressed with  the Cram{\'e}r series~\cite{ibragimov-linnik}, whose definition we briefly recall. Let $\phi(t)$ be the cumulant generating function of $X$, 
\be 
   \phi(t) =\log  \E[\e{tX}] = \log G(\e{t}) \quad (\Re t\leq 0).
\ee
Note $\phi(0)=0$. As $t\to 0$, $\phi(t)$ can be approximated to arbitrary order 
\be \label{eq:phitaylor}
   \phi(t) = \sum_{j=1}^r \kappa_j \frac{t^j }{j!} + O(t^{r+1})
\ee
with finite and real expansion coefficients $\kappa_j \in \bbR$, the cumulants, see Section~\ref{sec:lindeloef}.
Notice that $\kappa_1 = \mu$ is the expectation and $\kappa_2 =\sigma^2>0$ is the variance of $X$. 

\begin{definition} \label{def:cramer}
 Let $t(\tau) =\frac{1}{\sigma^2} \tau+  \sum_{j\geq 2} a_j \tau^j$ be the formal power series obtained by inverting 
 \begin{equation*} 
     \sigma^2 t(\tau) + \sum_{j \geq 2} \kappa_{j+1} \frac{t(\tau)^j}{j!} =  \tau. 
  \end{equation*} 
 The \emph{Cram{\'e}r series} $\sum_{j\geq 0} \lambda_j \tau^j$ is the formal power series defined by composing the expansion of $t(\tau)$ with the expansion of $(\mu+\tau) t - \phi(t)$, 
 \begin{equation*} 
  (\mu+ \tau) t(\tau) - \sum_{j\geq 1} \kappa_j \frac{t(\tau)^j}{j!} = -\frac{\tau^2}{2\sigma^2} + \tau^3 \sum_{j\geq 0} \lambda_j \tau^j. 
 \end{equation*} 
\end{definition}

\noindent  Equivalently, the Cram{\'e}r series is the left-sided Taylor expansion of the Legendre transform $\phi^*$ at $\mu$: 
 let $\phi^*(x):= \sup_{t\leq 0} (t x - \phi(t))$. Then as $\tau \nearrow 0$, 
 \be \label{eq:craleg}
   \phi^*(\mu + \tau) = - \frac{\tau^2}{2\sigma^2} +\tau^3 \sum_{j= 0}^r \lambda_j \tau^{j} +  O(\tau^{r+4})
 \ee
 to arbitrarily high order $r$. 
 
 \begin{remark} 
 For $t>0$, $\log [\sum_{k \geq 1} p(k) \exp( k t)]$ is infinite and the standard convention is to set $\phi(t) = \infty$; then $\phi^*(\mu+ \tau) \equiv 0$ for $\tau \geq 0$ and Eq.~\eqref{eq:craleg} no longer applies. We adopt a different convention, however, for which $\varphi(t)$ is smooth in a neighborhood of $0$ (see Theorem~\ref{thm:bval}), though it becomes complex-valued, and Eq.~\eqref{eq:craleg} applies to $(\Re \varphi)^*$ for positive $\tau$ as well. 
\end{remark}

\subsection{Main theorems} 
Set $f_{n0} = f_n$ and for $r\geq 1$, 
\be
	f_{nr}(x) = q(x) + \frac{(N_n-x)^2}{2 n \sigma^2}- \frac{(N_n - x)^3}{n^2} \sum_{j=0}^{r-1} \lambda_j \Bigl(\frac{N_n - x}{n}\Bigr)^{j}.
\ee
Remember the minimization of $f_n(x)$ and the critical scales $\sqrt{n}\ll N_n^* < N_n^{**} =O(n^{1/(2-\alpha)}) \ll n$ illustrated in Fig.~\ref{fig variational}. In Proposition~\ref{prop:truncated} below we check that the properties of $f_n$ carry over to $f_{nr}$. 
The following theorems provide a local variant of a large deviations theorem by S.~V.~Nagaev~\cite{nagaev73}, see also~\cite[Theorem 2.1]{nagaev79}. 
Local results have been provided before, see~\cite{denisov-dieker-shneer} and the references therein. 
The principal difference, apart from the local character of the theorems, is that our detailed investigation of the variational problem and notably Lemma~\ref{lem:crit-magnitudes} allows us to formulate conditions directly in terms of $N_n$, whereas S.~V.~Nagaev's criteria included an indirect condition on the sign of some second derivative. 

\begin{theorem} \label{thm:moderate}
	Let $N_n\to \infty$ with $\sqrt{n}\ll N_n \leq (1+o(1))N_n^*$. Pick $r$ large enough so that $n (N_n/n)^{r}\to 0$. Then	
	$$
		\P(S_n = \mu n + N_n) \sim \frac{1}{\sqrt{2\pi\sigma^2 n}} \exp\Bigl( - \frac{N_n^2}{2n \sigma^2} + \frac{N_n^3}{n^2}\sum_{j=0}^{r-1} \lambda_j \Bigl(\frac{N_n}{n}\Bigr)^j \Bigr).
	$$
\end{theorem}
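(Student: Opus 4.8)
The plan is to reduce the local probability to a contour integral and evaluate it by saddle-point analysis, exploiting that in the moderate-deviations regime $\sqrt{n}\ll N_n\leq (1+o(1))N_n^*$ the dominant contribution comes from the ``small steps'' critical point rather than from a big jump. Concretely, I would start from the inversion formula
$$
\P(S_n=\mu n+N_n)=\frac{1}{2\pi}\int_{-\pi}^{\pi} G(\e{\ii\theta})^n\,\e{-\ii\theta(\mu n+N_n)}\,\dd\theta,
$$
or, after the change of variables suggested by the cumulant generating function, as a Bromwich-type integral $\frac{1}{2\pi\ii}\int \exp\bigl(n\phi(t)-t(\mu n+N_n)\bigr)\dd t$ over a vertical contour $\Re t=t_n<0$. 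Here one uses the analytic extension of $\phi$ furnished by Assumptions~\ref{ass:analyticity}--\ref{ass:analyticity2} (Steps 1--2 of the proof strategy, via the Lindel\"of integral) so that $\phi$ is smooth, indeed complex-analytic, in a neighborhood of $0$; this is what lets us bypass the usual truncation/approximation errors.

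Next I would choose the saddle $t_n$ by the equation $\phi'(t_n)=\mu+N_n/n$, equivalently by Legendre duality $t_n=-(\phi^*)'(\mu+N_n/n)$. Since $N_n/n\to 0$ in this regime, $t_n\to 0$, and the key analytic input is Eq.~\eqref{eq:craleg}: the left-sided Taylor expansion of the Legendre transform at $\mu$ gives
$$
n\phi^*(\mu+N_n/n)=\frac{N_n^2}{2n\sigma^2}-\frac{N_n^3}{n^2}\sum_{j=0}^{r-1}\lambda_j\Bigl(\frac{N_n}{n}\Bigr)^j+n\,O\!\bigl((N_n/n)^{r+3}\bigr),
$$
and the condition $n(N_n/n)^r\to 0$ (hence a fortiori $n(N_n/n)^{r+3}\to 0$) guarantees the error term is $o(1)$, so it contributes only a $1+o(1)$ factor after exponentiation. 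This is precisely the role of the hypothesis on $r$. The Gaussian fluctuation integral around $t_n$ contributes $\phi''(t_n)^{-1/2}=\sigma^{-1}(1+o(1))$ together with the standard $1/\sqrt{2\pi n}$, producing the prefactor $1/\sqrt{2\pi\sigma^2 n}$; the cubic and higher terms in the local expansion of $\phi$ at $t_n$ are $O(n|t_n|^3)=O(n(N_n/n)^3)=o(1)$ by the same token, so they too are harmless.

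The remaining work is to control the contour away from the saddle and show it is negligible. On the circular-integral side this means bounding $|G(\e{\ii\theta})|$ away from $\theta=0$; on the Bromwich side it means bounding $|\exp(n\phi(t_n+\ii s))|$ for $|s|$ bounded away from $0$, which is where Assumption~\ref{ass:analyticity2}(i) (decay of $\exp(-\Re q)$ along the vertical line) and the convexity/monotonicity consequences \eqref{eq:qprimevar}--\eqref{eq:qprimetozero} enter to give a genuine spectral-type gap $\Re\phi(t_n+\ii s)-\Re\phi(t_n)\leq -c\min(s^2,1)$. I expect this tail estimate to be the main obstacle: one must rule out that any secondary saddle (the big-jump contribution governed by $x_n$, which for $N_n\lesssim N_n^*$ is suppressed by Lemma~\ref{lem:breakeven} and Lemma~\ref{lem:critsequences}) or any non-decaying stretch of the contour competes with the Gaussian peak. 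Once the tail is shown to be $o\bigl(n^{-1/2}\exp(-n\phi^*(\mu+N_n/n))\bigr)$, assembling the saddle contribution, the Legendre/Cram\'er expansion, and the error bookkeeping yields the claimed asymptotic equivalence.
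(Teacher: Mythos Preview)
Your overall saddle-point strategy is sound and your identification of the Cram\'er/Legendre expansion as the source of the exponent is correct, but there is a genuine gap in how you handle the contour deformation. First a minor point: the saddle $t_n$ solving $\phi'(t_n)=\mu+N_n/n$ lies at \emph{positive} $t_n$ (since $N_n>0$ and $\phi'(0)=\mu$, $\phi''(0)=\sigma^2>0$), not $t_n<0$. More importantly, the analytic extension of $G$ furnished by the Lindel\"of integral lives only on the slit plane $\C\setminus[1,\infty)$, so $\phi(t)=\log G(\e{t})$ is \emph{not} analytic across the positive real axis: Theorem~\ref{thm:bval} gives smooth boundary values and a real Taylor expansion at $0$, but you cannot simply slide a vertical contour from $\Re t\leq 0$ to $\Re t=\eta_n>0$ without picking up a contribution from the cut.

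The paper handles this by deforming to a rectangle (Figure~\ref{fig contours}), producing the decomposition $\P(S_n=\mu n+N_n)=V_n+H_n$ of~\eqref{eq:hv}--\eqref{eq:hvdef}. Your saddle-point computation is exactly the evaluation of the vertical piece $V_n$ (Lemma~\ref{lem:arcmod}); the substantial work you are missing is the proof that the horizontal piece $H_n=o(V_n)$. This is not a routine tail bound: $H_n$ encodes the big-jump mechanism via $\Im G(\e{t})$ and the Bromwich integral, and is controlled through $R_n(t)=\Psi_n(t)+\tfrac12\log|\psi''(t)|$. The paper splits into two cases. When $\limsup N_n/N_n^*<1$, Lemma~\ref{lem:sucri} shows $R_n$ is increasing on $(0,\eta_n)$, so its sup is at $\eta_n$ and one gets $H_n/V_n\leq \exp(\psi(\eta_n)+O(\log n))\to 0$. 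When $N_n\sim N_n^*$, an interior maximum of $R_n$ may appear; Lemma~\ref{lem:rncrit} pins it near $t_n^*$ and Lemma~\ref{lem:fncri} shows $f_n(x_n^*)-(N_n^*)^2/(2n\sigma^2)\geq \eps (N_n^*)^2/(2n\sigma^2)$, which yields $H_n/V_n\leq \exp(-(\eps+o(1))(N_n^*)^2/(2n\sigma^2)+O(\log n))\to 0$. Your reference to Assumption~\ref{ass:analyticity2}(i) concerns the $\xi$-integral inside $\Im G(\e{t})$ (Theorem~\ref{thm:imsmall}), not the $t$-contour, so it does not directly give the spectral gap you describe; the actual mechanism is the variational comparison between $f_n(x_n^*)$ and $f_n(0)$.
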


\begin{theorem} \label{thm:critical} 
	Let $N_n\to \infty$ with $\liminf N_n / N_n^* >1$ and $N_n = O(n^{1/[2-\alpha]} )$. Pick $r$ large enough so that $n (N^{*}_n/n)^{r}\to 0$. Then 
	\begin{multline*}
		\P(S_n = \mu n  + N_n) =(1+o(1)) \frac{1}{\sqrt{2\pi\sigma^2 n}} \exp\Bigl( - \frac{N_n^2}{2n \sigma^2} + \frac{N_n^3}{n^2}\sum_{j=0}^{r-1} \lambda_j \Bigl(\frac{N_n}{n}\Bigr)^j \Bigr) \\
			+ (1+o(1)) \frac{n}{\sqrt{1- n \sigma^2 |q''(x_{nr})|}} \exp\Bigl( - f_{nr}(x_{nr})\Bigr)
	\end{multline*}
	with $x_{nr}= N_n + O(N_n^*)$ the largest solution of $f'_{nr}(x_{nr}) =0$. 
\end{theorem}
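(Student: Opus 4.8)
The plan is to read Theorem~\ref{thm:critical} off the integral representation of Step~3 as a sum of two saddle-point contributions: the first coincides with the one already isolated for Theorem~\ref{thm:moderate}, the second is a genuinely bivariate saddle encoding the big jump. First I would record the representation from Section~\ref{sec:lindeloef}. Starting from Cauchy's formula $\P(S_n = \mu n + N_n) = \tfrac{1}{2\pi\ii}\oint G(z)^n z^{-\mu n - N_n - 1}\dd z$, deforming the contour off the unit circle toward the singularity at $z = 1$, and inserting the Lindel\"of integral for the heavy-tailed part of a single factor of $G$, one arrives at a decomposition $\P(S_n = \mu n + N_n) = I_{\mathrm{bulk}} + I_{\mathrm{jump}}$. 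Here $I_{\mathrm{bulk}}$ is a one-dimensional Bromwich integral in $t = \log z$, and $I_{\mathrm{jump}}$ is a two-dimensional integral over $(t,\xi)$ with $\Re\xi = b$, whose exponent is to leading order $\Psi_n(t,\xi) = n\phi(t) - (\mu n + N_n)t + \xi t - q(\xi)$; its real saddle equations $\phi'(t) = \mu + (N_n - \xi)/n$ and $t = q'(\xi)$ reduce, via $\phi'(s) = \mu + \sigma^2 s + O(s^2)$, to $f_n'(\xi) = 0$, so the relevant saddle sits at the largest critical point of $f_n$ from Lemma~\ref{lem:avar}(c),(d), and the Bromwich-side Cram\'er corrections replace $f_n$ by $f_{nr}$, whose critical-point structure carries over by Proposition~\ref{prop:truncated}.

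Second, I would evaluate $I_{\mathrm{bulk}}$ exactly as for Theorem~\ref{thm:moderate}: since $N_n = O(n^{1/(2-\alpha)}) \ll n$, the saddle $t_n$ solving $\phi'(t_n) = \mu + N_n/n$ tends to $0$, the expansions \eqref{eq:phitaylor}--\eqref{eq:craleg} apply, and a one-dimensional Gaussian approximation (Section~\ref{sec:gaussian}) gives $I_{\mathrm{bulk}} = (1+o(1))(2\pi\sigma^2 n)^{-1/2}\exp\bigl(-\tfrac{N_n^2}{2n\sigma^2} + \tfrac{N_n^3}{n^2}\sum_{j=0}^{r-1}\lambda_j(N_n/n)^j\bigr)$, the truncation error being $o(1)$ by the choice of $r$. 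This is the first summand.

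Third, and this is the heart of the matter, I would evaluate $I_{\mathrm{jump}}$ by bivariate steepest descent (Section~\ref{sec:saddlepoint}). Because $\liminf N_n/N_n^* > 1$, Lemma~\ref{lem:crit-magnitudes} transported to $f_{nr}$ via Proposition~\ref{prop:truncated} gives a largest critical point $x_{nr} = N_n + O(N_n^*)$ with $n\sigma^2 f_{nr}''(x_{nr}) = 1 - n\sigma^2|q''(x_{nr})| = 1 + O(N_n^*/N_n)$ bounded away from $0$, so $\Psi_n$ has a nondegenerate saddle at $(t,\xi) = (q'(x_{nr}), x_{nr})$ with indefinite Hessian of determinant $\asymp -(1 - n\sigma^2|q''(x_{nr})|)$. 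Deforming the line $\{\Re\xi = b\}$ and the $t$-contour through this saddle along steepest-descent directions, the quadratic term produces $\exp(-f_{nr}(x_{nr}))$ in the exponent and the prefactor $n\,(1 - n\sigma^2|q''(x_{nr})|)^{-1/2}$: the factor $n$ from the $n$ choices of which summand is large, and $(1 - n\sigma^2|q''(x_{nr})|)^{-1/2}$ from the bulk Gaussian for the remaining $n - 1$ summands combined with the fluctuation of the large one, exactly as in the heuristics \eqref{eq:refined-heuristics}. The non-Gaussian remainder near the saddle is controlled by the bound $|p(\xi)| \le C_\eps\e{\eps|\xi|}$ and the integrability in Assumption~\ref{ass:analyticity}; the part of $\{\Re\xi = b\}$ far from the saddle and the deformation past the smaller critical point $x'_{nr} < x_n^*$ of $f_{nr}$ are discarded using the variational lemmas together with Assumption~\ref{ass:analyticity2}. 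This is the second summand, and adding the two gives the claim.

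The hard part will be the bivariate saddle-point step. One must produce and justify the steepest-descent contours for $I_{\mathrm{jump}}$ uniformly across the whole range $\liminf N_n/N_n^* > 1$, $N_n = O(n^{1/(2-\alpha)})$, where the bulk scale $\sqrt n$ and the big-jump scale $N_n^*$ interact and the saddle drifts toward the boundary of the half-plane of analyticity as $N_n/N_n^* \downarrow 1$; and one cannot shortcut the error estimates by comparing exponential orders, because precisely near $N_n \asymp N_n^{**}$ the two contributions $I_{\mathrm{bulk}}$ and $I_{\mathrm{jump}}$ have the same order, so each $(1+o(1))$ must be established separately and every discarded piece of a contour must be shown $o(1)$ relative to the \emph{correct} summand rather than to the sum.
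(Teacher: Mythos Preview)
Your proposal is broadly aligned with the paper's strategy, but two points deserve comment.

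First, the decomposition. The paper's split is \emph{not} obtained by pulling out one factor of $G$; it comes purely from the rectangular contour in the $t$-plane: the vertical side at $\Re t=\eta_n$ (where $\Re\varphi'(\eta_n)=\mu+N_n/n$) gives $V_n$, the horizontal pieces along the slit give $H_n$. The factor $n$ in front of the big-jump contribution arises analytically from $\sin(n\Im\varphi(t))\sim n\Im\varphi(t)\sim n\Im G(\e{t})$ (Lemma~\ref{lem:sineaway}), not from a combinatorial ``$n$ choices of which summand is large''. Your heuristic interpretation is fine, but the analytic mechanism is different, and getting it right matters for the error control you worry about at the end.

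Second, the bivariate step and the extra critical point. The paper \emph{avoids} a genuine bivariate saddle-point argument: it first evaluates the inner $\xi$-integral once and for all via Theorem~\ref{thm:imsmall}, $\Im G(\e{t})\sim\tfrac12\sqrt{2\pi|\psi''(t)|}\,\e{\psi(t)}$, reducing $H_n$ to a one-dimensional integral $\int_0^{\eta_n}\sqrt{|\psi''(t)|}\,\e{\Psi_n(t)}\dd t$ with $\Psi_n(t)=n\Re\varphi(t)-mt+\psi(t)$. This function has \emph{two} critical points $t_n<t'_n$ in $(0,\eta_n)$ (Lemma~\ref{lem:crit-medium}), and the crucial bookkeeping is the further split $H_n=H_n^1+H_n^2$ at $t=t'_n$: the Gaussian approximation around $t_n$ gives $H_n^1$ and hence the second summand (Lemma~\ref{lem:gaussian}), while one shows separately that $H_n^2=o(V_n)$ by monotonicity of $\Psi_n$ on $(t'_n,\eta_n)$ and the bound $\psi(\eta_n)\to-\infty$. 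Your proposal mentions discarding ``the smaller critical point $x'_{nr}$'' in the $\xi$-direction, but the delicate piece is in the $t$-direction near $t'_n$, and it must be compared to $V_n$ (not to the jump term), exactly as you anticipate in your last paragraph. Finally, your concern about the saddle ``drifting toward the boundary of the half-plane'' as $N_n/N_n^*\downarrow 1$ is misplaced: what actually threatens the argument is the \emph{merging} of $t_n$ and $t'_n$ (equivalently $x_n$ and $x'_n$), which is excluded by $\liminf N_n/N_n^*>1$ via Lemma~\ref{lem:crit-medium}.
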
 

\noindent 
Lemma~\ref{lem:breakeven} suggests that for $N_n \ll N_n^{**}$, the first contribution dominates and for $N_n \gg N_n^{**}$ the second contribution wins, but one has to be careful because of the factors $n$ and $1/\sqrt{2\pi n \sigma^2}$ as well as the Cram{\'e}r corrections; a detailed evaluation is best left to concrete examples (see, however, Corollary~\ref{cor:big-jump} below). 

\begin{theorem}\label{thm:large} 
	Let $N_n\to \infty$ with $N_n\gg n^{1/(2-\alpha)}$. Pick $r$ large enough so that $n (N^{*}_n/n)^{r}\to 0$. Then
	\begin{equation*}
		\P(S_n = \mu n + N_n) \sim n \exp\Bigl( - f_{nr}(x_{nr})\Bigr)
	\end{equation*}	 
	with $x_{nr}= N_n + O(N_n^*)$ the largest solution of $f'_{nr}(x_{nr}) =0$.
\end{theorem}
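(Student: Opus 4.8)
\emph{Plan of proof.} The plan is to run the five-step strategy of the introduction, specialised to the big-jump regime $N_n\gg n^{1/(2-\alpha)}$, where two things simplify: the small-steps contribution is exponentially subdominant, and Lemma~\ref{lem:crit-magnitudes} (with Proposition~\ref{prop:truncated}, which transfers the variational structure of $f_n$ to $f_{nr}$) forces $x_{nr}=N_n+O(N_n^*)$ and $n\sigma^2|q''(x_{nr})|=O(N_n^*/N_n)\to0$, so the prefactor $1/\sqrt{1-n\sigma^2|q''(x_{nr})|}$ of Theorem~\ref{thm:critical} collapses to $1$. First I would start from the Cauchy integral $\P(S_n=\mu n+N_n)=\frac{1}{2\pi\ii}\oint_{|z|=\rho}G(z)^n z^{-\mu n-N_n-1}\,\dd z$ with $\rho\in(0,1)$, pass to the Bromwich variable $u=\log z$, and split $G=G_{\mathrm{reg}}+G_{\mathrm{sing}}$: here $G_{\mathrm{reg}}(z)=\sum_{k\le K}p(k)z^k$ is a polynomial with $1\ll K\ll N_n$ (chosen so that the cumulants and Cram\'er series of $u\mapsto\log G_{\mathrm{reg}}(\e u)$ match those of $X$ to order $r$), and $G_{\mathrm{sing}}(z)=\sum_{k>K}p(k)z^k$ is rewritten, using Assumption~\ref{ass:analyticity}(i)--(ii) and a Lindel\"of integral over a vertical line $\Re\xi=K+1/2$, as a contour integral that continues $G_{\mathrm{sing}}$ analytically into a slit neighbourhood of $z=1$.

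Expanding $G^n=\sum_j\binom nj G_{\mathrm{reg}}^{n-j}G_{\mathrm{sing}}^{j}$ organises the coefficient into three kinds of contribution. The configurations with no macroscopic jump --- essentially the $j=0$ term plus the small-$\xi$ parts of the higher terms --- reconstruct the small-steps asymptotics $(2\pi\sigma^2n)^{-1/2}\exp(-\frac{N_n^2}{2n\sigma^2}+\frac{N_n^3}{n^2}\sum_j\lambda_j(N_n/n)^j)$ of Theorem~\ref{thm:moderate}, which is exponentially negligible against $n\exp(-f_{nr}(x_{nr}))$: indeed $N_n\gg n^{1/(2-\alpha)}$ forces $N_n^{2-\alpha}\gg n$, hence $q(N_n)\ll N_n^2/(2n\sigma^2)$, whereas $f_{nr}(x_{nr})\le f_{nr}(N_n)=q(N_n)$ (Proposition~\ref{prop:truncated}, cf.\ Lemma~\ref{lem:avar}(d) and Lemma~\ref{lem:breakeven}; note $x_n^*=O(n^{1/(2-\alpha)})\ll N_n$ by Lemma~\ref{lem:critsequences}), so the exponential gap $\gtrsim N_n^2/n$ swamps all polynomial factors. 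Configurations with two or more macroscopic jumps (the $j\ge2$ terms) are still smaller: strict concavity of $q$, i.e.\ subadditivity $q(x_1+x_2)\le q(x_1)+q(x_2)$, makes splitting a jump strictly costlier, and the factor $\binom nj$ is beaten since $q(N_n)\gg\log n$ by Assumption~\ref{ass:convexity}(ii). Thus everything is controlled by the single-macroscopic-jump part of the $j=1$ term, $n\cdot\frac{1}{2\pi\ii}\oint G_{\mathrm{reg}}(z)^{n-1}G_{\mathrm{sing}}(z)z^{-\mu n-N_n-1}\,\dd z$, which on inserting the Lindel\"of representation of $G_{\mathrm{sing}}$ becomes a \emph{bivariate double integral} in $(z,\xi)$ with leading exponent
\[
 \Psi_n(u,\xi)=(n-1)\phi_{\mathrm{reg}}(u)+\xi u-q(\xi)-(\mu n+N_n)u,\qquad u=\log z,\quad\phi_{\mathrm{reg}}(u):=\log G_{\mathrm{reg}}(\e u).
\]

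The core of the proof is the bivariate steepest-descent analysis of this integral (Steps~3--5). The critical-point equations $\partial_u\Psi_n=\partial_\xi\Psi_n=0$ read $(n-1)\phi_{\mathrm{reg}}'(u)=\mu n+N_n-\xi$ and $q'(\xi)=u$; eliminating $u$ and expanding $\phi_{\mathrm{reg}}$ through its cumulant series to order $r$ turns the first equation into $f'_{nr}(\xi)=0$, and here is where $n(N_n^*/n)^r\to0$ is used: it makes the truncation error negligible, since $N_n-x_{nr}=O(N_n^*)$. By Proposition~\ref{prop:truncated} and Lemma~\ref{lem:crit-magnitudes} the relevant root is $\xi_*=x_{nr}=N_n+O(N_n^*)$ with $u_*=q'(x_{nr})>0$; the fact that $u_*>0$ places $z_*=\e{u_*}$ beyond the unit circle is admissible precisely because $G_{\mathrm{sing}}$ has been continued past it. A computation with the Cram\'er series (Definition~\ref{def:cramer}) identifies the critical value $\Psi_n(u_*,\xi_*)$ with $-f_{nr}(x_{nr})$ up to the controlled truncation error --- this is exactly what $f_{nr}$ is designed to produce, and it matches the heuristics of~\eqref{eq:refined-heuristics}. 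The $2\times2$ Hessian of $\Psi_n$ at the saddle has entries $(n-1)\phi_{\mathrm{reg}}''(u_*)$, $1$, $-q''(\xi_*)$, so its determinant equals $-(n-1)\sigma^2 q''(x_{nr})-1+o(1)=-(1-n\sigma^2|q''(x_{nr})|)$. Deforming both contours through $(u_*,\xi_*)$ along directions of steepest descent --- a Hankel-type deformation of the $z$-contour around the slit at $z=1$, where the discontinuity of $G_{\mathrm{sing}}$ is governed by a $\xi$-saddle at $\xi\approx N_n$, together with a deformation of the Lindel\"of $\xi$-contour along which the kernel $\pi/\sin(\pi\xi)$ and its integer poles must be tracked --- and applying a Gaussian approximation yields $n\exp(-f_{nr}(x_{nr}))/\sqrt{1-n\sigma^2|q''(x_{nr})|}$; by Lemma~\ref{lem:crit-magnitudes} the square root tends to $1$, which gives $\P(S_n=\mu n+N_n)\sim n\exp(-f_{nr}(x_{nr}))$.

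\emph{Main obstacle.} The hard part is making this bivariate steepest descent rigorous: one must deform the two contours \emph{simultaneously} through a saddle lying on the boundary of the domain of $G$ (just outside $|z|=1$); verify that the Lindel\"of continuation of $G_{\mathrm{sing}}$ is valid with sharp estimates along the whole deformed $z$-contour (this is where Assumption~\ref{ass:analyticity} enters); control the kernel $\pi/\sin(\pi\xi)$, which is exponentially large or small according to the sign of $\Im\xi$ and has poles at the integers, along the deformed $\xi$-contour; and show that the quadratic (Gaussian) approximation to $\Psi_n$ near $(u_*,\xi_*)$ has relative error $o(1)$ \emph{uniformly} over the whole range $N_n\gg n^{1/(2-\alpha)}$, which relies on Assumption~\ref{ass:convexity}(iii)--(iv) to bound the cubic and higher terms of $\Psi_n$ against the quadratic one near the saddle and on Assumption~\ref{ass:convexity}(v) for the behaviour of $q$ at the largest admissible $N_n$. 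A secondary difficulty is the bookkeeping around the cutoff $K$: large enough that $\phi_{\mathrm{reg}}$ matches the cumulant generating function of $X$ to order $r$ near $u=0$ (so the Cram\'er corrections in $f_{nr}$ are the genuine ones), yet small enough that all jumps of order $N_n$ stay inside $G_{\mathrm{sing}}$ and the $j\ge2$ terms remain negligible.
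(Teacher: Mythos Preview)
Your approach is genuinely different from the paper's, and it is worth contrasting the two.

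\textbf{What the paper does.} The paper never splits $G=G_{\mathrm{reg}}+G_{\mathrm{sing}}$ and never expands $G^n$ binomially. Instead it analytically continues the \emph{whole} of $G$ to the slit plane via the Lindel\"of integral (Proposition~\ref{prop:lindeloef}), then deforms the single Cauchy contour in the $t$-plane to a rectangle, obtaining $\P(S_n=n\mu+N_n)=H_n+V_n$ exactly as in~\eqref{eq:hvdef}. Two simple estimates (Lemmas~\ref{lem:arc} and~\ref{lem:arc2}) bound $V_n$ from above; the comparison with $H_n$ is just~\eqref{eq:grolo}. For $H_n$ the key identity is Theorem~\ref{thm:bval}(b): the boundary value $\Im G(\e t)$ is a plain \emph{Bromwich} integral $\frac{1}{2\ii}\int p(\xi)\e{t\xi}\,\dd\xi$ with \emph{no} $\pi/\sin(\pi\xi)$ kernel. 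Theorem~\ref{thm:imsmall} evaluates this as $\frac12\sqrt{2\pi|\psi''(t)|}\,\e{\psi(t)}$, which collapses the bivariate integral to a \emph{univariate} one in $t$; the Gaussian approximation is then Lemmas~\ref{lem:gaussian2}--\ref{lem:gaussian3} on $\Psi_n(t)=n\varphi(t)-mt+\psi(t)$. The proof of Theorem~\ref{thm:large} itself is short (Section~6.4): choose the right endpoint ($\delta_0$ or $t'_n$), quote~\eqref{eq:g2}--\eqref{eq:g3}, and check $V_n=o(H_n)$ via $q(N_n)=O(N_n^\alpha)\ll N_n\eps_n$.

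\textbf{What your route buys and costs.} Your decomposition mirrors the probabilistic ``one big jump'' picture more transparently and is closer in spirit to A.~V.~Nagaev's original argument than to the paper's complex-analytic one. But several of the obstacles you list are artifacts of your setup rather than intrinsic: the sine kernel $\pi/\sin(\pi\xi)$ never appears in the paper's main computation because the imaginary part along the slit is a Bromwich integral, not a Lindel\"of one; and there is no cutoff $K$ to tune. Conversely, two steps in your plan are not yet under control. First, the claim that the $j\ge2$ terms are negligible ``by subadditivity of $q$'' is only a heuristic: after the binomial expansion you are extracting $[z^m]G_{\mathrm{reg}}^{n-j}G_{\mathrm{sing}}^j$, and turning the pointwise inequality $q(x_1)+q(x_2)\ge q(x_1+x_2)$ into a bound on that coefficient, uniformly in $j$ and in the location of the $j$ saddle points in $\xi$, requires a genuine argument (in particular $G_{\mathrm{reg}}$ is not a probability generating function, so $[z^{m-k}]G_{\mathrm{reg}}^{n-1}$ is not a probability you can estimate by a local CLT without further work). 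Second, matching the cumulants of $\phi_{\mathrm{reg}}$ to those of $X$ to order $r$ while keeping $K\ll N_n$ is a real constraint that interacts with the error control in the Cram\'er truncation; the paper sidesteps this entirely because Theorem~\ref{thm:bval}(c) gives the exact cumulant expansion of $\varphi$ itself.

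In short: your outline is plausible and the saddle computation you describe is correct and leads to the right answer, but the paper's route is both shorter and avoids the two gaps above; if you want to complete your version you should either supply a rigorous bound on the $j\ge2$ contributions (e.g.\ by iterating the $j=1$ saddle estimate) or abandon the binomial split and follow the $H_n+V_n$ decomposition instead.
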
 

\noindent 
In practice one may prefer not to deal with the Cram{\'e}r corrections or the variational problem, and the following proposition is helpful.

\begin{prop}\label{prop:truncated}
	Suppose that $\liminf_{n\to \infty} N_n/N_n^*>1$. Fix $r\in \N_0$. Then, for sufficiently large $n$, $x_{nr}$ is the maximizer of $f_{nr}$ restricted to $(x_n^*,N_n)$ and the unique zero of $f'_{nr}$ in that interval. Moreover $1- n \sigma^2 |q''(x_{nr})| = 1 + O(N_n^*/N_n)$ stays bounded away from zero and 
	\begin{align*}
		x_{nr} & = N_n - (1+o(1))n \sigma^2 q'(x_{nr}) = N_n  + O(N_n^*), \\
		f_{nr}(x_{nr}) & = q(x_{nr}) + \frac{1}{2}(1+o(1)) n \sigma^2 q'(x_{nr}) = q(N_n) \Bigl(1 + O\Bigl(\frac{N_n^*}{N_n}\Bigr)\Bigr). 
	\end{align*}
\end{prop}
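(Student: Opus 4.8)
The plan is to transfer the conclusions of Lemmas~\ref{lem:avar}(c)--(d) and~\ref{lem:crit-magnitudes}, proved for $f_n=f_{n0}$, to the truncated functional $f_{nr}$; the mechanism is that on the range on which the large critical point lives the Cram\'er corrections are of strictly lower order. Write $y=N_n-x$ and
$$\Psi_{nr}(y):=\frac{y^2}{2n\sigma^2}-\frac{y^3}{n^2}\sum_{j=0}^{r-1}\lambda_j\Bigl(\frac yn\Bigr)^{j},\qquad f_{nr}(x)=q(x)+\Psi_{nr}(N_n-x).$$
Since $r$ is fixed and the $\lambda_j$ are constants, $\Psi_{nr}'(y)=\tfrac{y}{n\sigma^2}\bigl(1+O(y/n)\bigr)$ and $\Psi_{nr}''(y)=\tfrac{1}{n\sigma^2}\bigl(1+O(y/n)\bigr)$ for $0\le y=o(n)$; since $N_n^*\ll n$ by Lemma~\ref{lem:critsequences} (and, in the applications, $N_n\ll n$), this is valid on the window $y=O(N_n^*)$ in which the critical point will be located. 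The proof then has three steps: a sign/monotonicity analysis of $f_{nr}'$ on $(x_n^*,N_n)$ yielding a single zero $x_{nr}$; its location $x_{nr}=N_n+O(N_n^*)$; and the estimates on $q''(x_{nr})$ and $f_{nr}(x_{nr})$.

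\emph{Step 1.} Put $\eps_0:=\liminf N_n/N_n^*-1>0$, so $N_n\ge(1+\eps_0+o(1))N_n^*$, and recall $N_n^*\asymp x_n^*$. Then $f_{nr}'(N_n)=q'(N_n)>0$, while at $x=x_n^*$, using $q'(x_n^*)=(N_n^*-x_n^*)/(n\sigma^2)$ from~\eqref{eq:xnstar},
$$f_{nr}'(x_n^*)=q'(x_n^*)-\Psi_{nr}'(N_n-x_n^*)=\tfrac1{n\sigma^2}\bigl[(N_n^*-x_n^*)-(N_n-x_n^*)(1+o(1))\bigr]<0 ;$$
the same computation gives $f_{nr}'<0$ on all of $(x_n^*,(1+\eta)x_n^*]$, for any fixed $\eta\in(0,\eps_0)$. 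On $((1+\eta)x_n^*,N_n)$, estimate~\eqref{eq:qsecvar} yields $q''(x)+\tfrac1{n\sigma^2}\ge\tfrac1{n\sigma^2}\bigl(1-(1+\eta)^{-c_3}\bigr)$, which dominates the $o(1/(n\sigma^2))$ correction to $f_{nr}''$ coming from $\Psi_{nr}''$; hence $f_{nr}$ is strictly convex and $f_{nr}'$ strictly increasing there, so $f_{nr}'$ vanishes exactly once, at $x_{nr}$. Together with the sign on $(x_n^*,(1+\eta)x_n^*]$ this shows $x_{nr}$ is the unique zero of $f_{nr}'$ in $(x_n^*,N_n)$; and since $f_{nr}'<0$ to its left and $>0$ to its right, $f_{nr}$ restricted to $(x_n^*,N_n)$ is minimized at $x_{nr}$ (consistent with Lemma~\ref{lem:avar}(c)).

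\emph{Steps 2--3.} From $f_{nr}'(x_{nr})=0$, i.e.\ $q'(x_{nr})=\Psi_{nr}'(N_n-x_{nr})=\tfrac{N_n-x_{nr}}{n\sigma^2}(1+o(1))$, one reads off $x_{nr}=N_n-(1+o(1))n\sigma^2q'(x_{nr})$; since $x_{nr}>x_n^*$ and $q'$ decreases, $n\sigma^2q'(x_{nr})\le n\sigma^2q'(x_n^*)=N_n^*-x_n^*=O(N_n^*)$, hence $x_{nr}=N_n+O(N_n^*)$ and $x_{nr}\asymp N_n$. Comparing $q'(x_{nr})=\tfrac{N_n-x_{nr}}{n\sigma^2}(1+o(1))$ with $q'(x_n^*)=\tfrac{N_n^*-x_n^*}{n\sigma^2}$ and using $\int_{x_n^*}^{x_{nr}}q''<0$ gives $x_{nr}-x_n^*>(1-o(1))(N_n-N_n^*)\gtrsim\eps_0x_n^*$, so $x_{nr}/x_n^*$ stays bounded away from $1$; then~\eqref{eq:qsecvar} and $|q''(x_n^*)|=1/(n\sigma^2)$ give $n\sigma^2|q''(x_{nr})|\le(x_{nr}/x_n^*)^{-c_3}\le1-\delta'$ for some $\delta'>0$, so $1-n\sigma^2|q''(x_{nr})|$ is bounded away from $0$, while Assumption~\ref{ass:convexity}(iii) with $x_{nr}\asymp N_n$ and~\eqref{eq:qprimevar} sharpen this to $n\sigma^2|q''(x_{nr})|=O(N_n^*/N_n)$. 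Finally $f_{nr}(x_{nr})=q(x_{nr})+\Psi_{nr}(N_n-x_{nr})=q(x_{nr})+\tfrac12(1+o(1))(N_n-x_{nr})q'(x_{nr})$, and since $|q(N_n)-q(x_{nr})|\le\sup_{[x_{nr},N_n]}q'\cdot O(N_n^*)=O(q'(N_n)N_n^*)$ while $q'(N_n)N_n^*=O\bigl(q(N_n)N_n^*/N_n\bigr)$ by Assumption~\ref{ass:convexity}(v), this equals $q(N_n)\bigl(1+O(N_n^*/N_n)\bigr)$.

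The main obstacle is Step~1, specifically excluding a spurious zero of $f_{nr}'$ near $x_n^*$, where $q''(x)+1/(n\sigma^2)$ degenerates and the Cram\'er correction to $f_{nr}''$ is no longer manifestly dominated: the split at $(1+\eta)x_n^*$, which genuinely uses the strict inequality $\liminf N_n/N_n^*>1$, is what makes this go through. Beyond that the argument is a bookkeeping of the $O(y/n)$ errors in $\Psi_{nr}$ together with the monotonicity estimates~\eqref{eq:qprimevar}--\eqref{eq:qsecvar}, uniform over the window $y\in(0,N_n-x_n^*)$ provided $N_n\ll n$.
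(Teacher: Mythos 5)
Your proposal is correct and follows essentially the same route as the paper's own proof: negativity of $f'_{nr}$ on $(x_n^*,(1+\delta)x_n^*]$ via $q'(x_n^*)=(N_n^*-x_n^*)/(n\sigma^2)$ and $\liminf N_n/N_n^*>1$, strict convexity on $((1+\delta)x_n^*,N_n)$ from $1-n\sigma^2|q''|$ bounded below plus $O((N_n-x)/n)$ corrections, positivity at $N_n$, and then $q'(x_{nr})\le q'(x_n^*)$ to locate $x_{nr}=N_n+O(N_n^*)$ and check the remaining asymptotics. You even supply the details the paper omits for $f_{nr}(x_{nr})$, and you correctly read the statement's two slips ($x_{nr}$ is the \emph{minimizer} of $f_{nr}$, i.e.\ the maximizer of the exponent $-f_{nr}$, and the exponent term is $\tfrac12(1+o(1))n\sigma^2 q'(x_{nr})^2$), as well as making explicit the implicit requirement $(N_n-x)/n\to0$ that the paper's proof also uses.
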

\noindent The proposition is proven in Section~\ref{sec:variational}.  
%
For $N_n \gg N_n^*$, we obtain $x_{nr} \sim N_n$, $q'(x_{nr})\sim q'(N_n)$, and 
$q(x_{nr}) = q(N_n) - (1+o(1)) n \sigma^2 q'(N_n)$, hence 
\be \label{eq:fnrq}
	f_{nr}(x_{nr}) = q(N_n) - \frac{1}{2}(1+o(1)) n\sigma^2 q'(N_n)^2.
\ee
Now suppose in addition that $\liminf \frac{N_n^2}{2n \sigma^2}/q(N_n) >1$. Then in Theorem~\ref{thm:critical}, the first summand is of order $\exp( - (1+o(1)) N_n^2/(2n \sigma^2)$, the second of order $\exp( - (1+o(1))q(N_n))$, so the first contribution is negligible and the validity of Theorem~\ref{thm:large} extends accordingly, since $1- n \sigma^2 |q''(x_{nr})|\to 1$ for $N_n\gg N_n^*$. Eq.~\eqref{eq:fnrq} now yields the following corollary. 

\begin{corollary} \label{cor:big-jump} 
	Take $N_n\to \infty$ with $N_n \gg N_n^*$ and $\liminf \frac{N_n^2}{2n \sigma^2} /q(N_n)>1$. Then
	$$ \P(S_n = n\mu + N_n) \sim n \e{-q (N_n)} = n \P(X=N_n)$$
	if and only if $\sqrt{n\sigma^2} q'(N_n) \to 0$.
\end{corollary}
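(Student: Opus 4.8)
The plan is to read the corollary directly off Theorem~\ref{thm:large} and the identity~\eqref{eq:fnrq}; the discussion preceding the corollary has already carried out most of the work, so only two small things remain: confirming that the asymptotics $\P(S_n=\mu n+N_n)\sim n\exp(-f_{nr}(x_{nr}))$ holds for \emph{every} sequence with $N_n\gg N_n^*$ (not merely the two ``pure'' subregimes named in the theorems), and converting the resulting exponential ratio into the stated biconditional.

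For the first point, fix $r$ with $n(N_n^*/n)^r\to0$. When $N_n\gg n^{1/(2-\alpha)}$ the asymptotics is Theorem~\ref{thm:large} verbatim. When $N_n=O(n^{1/(2-\alpha)})$ we have $N_n/N_n^*\to\infty$, so Theorem~\ref{thm:critical} applies, and, exactly as in the paragraph preceding the corollary, the hypothesis $\liminf \frac{N_n^2}{2n\sigma^2}/q(N_n)>1$ makes its Gaussian summand negligible against the second summand, while $1-n\sigma^2|q''(x_{nr})|\to1$ by Proposition~\ref{prop:truncated} since $N_n\gg N_n^*$; hence again $\P(S_n=\mu n+N_n)\sim n\exp(-f_{nr}(x_{nr}))$. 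A general sequence with $N_n\gg N_n^*$ need not lie in either subregime, but since $N_n^{**}=O(n^{1/(2-\alpha)})$ by Lemma~\ref{lem:critsequences}, every subsequence admits a further subsequence of one of the two types above (along which the global property $N_n/N_n^*\to\infty$ is inherited), and as $\P(S_n=\mu n+N_n)/(n\exp(-f_{nr}(x_{nr})))\to1$ along each of them it holds along the whole sequence.

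Now substitute~\eqref{eq:fnrq}, which is legitimate because $N_n\gg N_n^*$, and divide by $n\,\P(X=N_n)=n\e{-q(N_n)}$:
\[
	\frac{\P(S_n=\mu n+N_n)}{n\,\P(X=N_n)}\sim \exp\bigl(q(N_n)-f_{nr}(x_{nr})\bigr)=\exp\Bigl(\tfrac12\,(1+o(1))\,n\sigma^2 q'(N_n)^2\Bigr).
\]
Since $q'>0$ on $(a,\infty)$ and $N_n\to\infty$, the quantity $\sqrt{n\sigma^2}\,q'(N_n)$ is eventually positive, so $\sqrt{n\sigma^2}\,q'(N_n)\to0$ is equivalent to $n\sigma^2 q'(N_n)^2\to0$, in which case the right-hand side tends to $1$; this is the ``if'' direction. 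For ``only if'', if $\sqrt{n\sigma^2}\,q'(N_n)\not\to0$ pick a subsequence along which $n\sigma^2 q'(N_n)^2\geq\delta>0$; along it the exponent is $\geq\delta/4$ for large $n$, whence $\liminf\P(S_n=\mu n+N_n)/(n\,\P(X=N_n))\geq\e{\delta/4}>1$, so $\P(S_n=\mu n+N_n)\not\sim n\,\P(X=N_n)$.

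The closest thing to an obstacle is the bookkeeping in the first point: one must make sure the relation $\P\sim n\exp(-f_{nr}(x_{nr}))$ really covers every sequence with $N_n\gg N_n^*$ and not only those falling verbatim under Theorems~\ref{thm:critical} and~\ref{thm:large}, which is exactly what the interpolation through $N_n^{**}=O(n^{1/(2-\alpha)})$ together with the subsequence principle supplies. The $(1+o(1))$ sitting in the exponent of the ratio is also the reason the ``only if'' direction must be run along subsequences rather than through a one-line estimate.
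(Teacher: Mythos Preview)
Your proof is correct and follows essentially the same approach as the paper, which derives the corollary directly from the discussion preceding it (combining Theorems~\ref{thm:critical} and~\ref{thm:large} with Eq.~\eqref{eq:fnrq} and Proposition~\ref{prop:truncated}). You have been more explicit than the paper in two places: the subsequence argument covering sequences that straddle the threshold $n^{1/(2-\alpha)}$, and the ``only if'' direction of the biconditional; both are straightforward and handled correctly. The reference to $N_n^{**}=O(n^{1/(2-\alpha)})$ in the interpolation step is superfluous---the dichotomy $N_n\gg n^{1/(2-\alpha)}$ versus $N_n=O(n^{1/(2-\alpha)})$ is what drives the argument---but it does no harm.
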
 

\noindent In concrete examples, Theorem~\ref{thm:moderate} should allow us to extend the domain of validity of the corollary to $N_n \gg N_n^{**}$.  The condition $\sqrt{n\sigma^2} q'(N_n) \to 0$ is closely related to the \emph{insensitivity} scale discussed by Denisov, Dieker and Shneer~\cite{denisov-dieker-shneer}, as 
\be
	\frac{p(N_n \pm  \sqrt{n \sigma^2}\bigr) }{p(N_n)} \to 1 \ \Leftrightarrow  \sqrt{n\sigma^2} q'(N_n)\to 0. 
\ee

\begin{remark}	[Big-jump vs small steps]
	The domain where $\P(S_n = n \mu + N_n) \sim n \P(X=N_n)$ is sometimes called \emph{big-jump domain}. Think of $S_n$ as the position of a random walker with step size distribution $p(k)$. In the situation of Corollary~\ref{cor:big-jump}, the unlikely event that the walker has travelled a distance  $\mu n + N_n$ much larger than the expected distance $\mu n$ is realized by one big jump of size $N_n$. Finding the boundary of the big-jump domain is an active field of research~\cite{denisov-dieker-shneer}. 
	
	The interpretation of Theorem~\ref{thm:moderate}, in contrast,  is that the  moderate overshoot $N_n$ is achieved by a collective effort: all steps tend to stay small, though each stretches a little beyond its expected value $\mu$. For stretched exponential variables, this interpretation is made rigorous in~\cite{nagaev} and~\cite{agl}. 
\end{remark} 

\begin{remark}[Condensation in the zero-range process] 
 	In the zero-range process, the random variables $X_1,\ldots,X_n$ model the number of particles at lattice sites $j=1,\ldots,n$, with $S_n$ the total number of particles, and $\mu$ is a critical density. Theorem~\ref{thm:moderate} corresponds to supersaturated gas. In Corollary~\ref{cor:big-jump}, the particle excess $N_n$ is absorbed by a condensate, i.e., one large occupation number. In Theorem~\ref{thm:large}, the particle excess is shared by a condensate of size $x_{nr}<N_n$ and supersatured gas. See~\cite{agl} and~\cite[Section 7]{eju}.  	
\end{remark}

We conclude with an equivalent but more intrinsic formulation of Theorem~\ref{thm:large}. In Section~\ref{sec:lindeloef} we shall see that $G(z)$ extends to a function that is analytic in the slit plane $\C\setminus [1,\infty)$, and in addition the limit $G(\e{t}) = \lim_{\eps \searrow 0} G(\e{t} + \ii \eps)$ exists for all $t\geq 0$. So the cumulant generating function $\varphi(t) = \log G(\e{t})$ extends to a function that is well-defined and smooth in a neighborhood of the origin in $\R$, and Eq.~\eqref{eq:phitaylor} stays valid for small positive $t$. We define
\begin{align}
     \Phi_n(t,\zeta) & = - q(\zeta) + n \Re \phi(t) - (\mu n + N_n) t + t\zeta \notag  \\
         & = - q(\zeta) + n \sum_{j=2}^{r-1} \kappa_j \frac{t^j}{j!} - t \bigl(N_n - \zeta) + O(t^{r}).  \label{eq:sndef}
 \end{align} 
The asymptotic expansion holds for every order $r$. For $N_n\gg N_n^*$ and sufficiently large $n$, the bivariate function $\Phi_n(t,\zeta)$ has exactly two critical points in $(0, \frac{N_n}{n\sigma^2})\times (a,\infty)$. We label them as $(t_n,\zeta_n)$ and $(t'_n,\zeta'_n)$ with $t_n< t'_n$. Then, in the situation of Theorem~\ref{thm:large}, we have
\be \label{eq:bivariatehn}
	\P(S_n = n\mu + N_n) \sim  n\, \e{\Phi_n(t_n,\zeta_n)}. 
\ee
It is in this form that we prove the theorem. Let us explain how to recover the expression in terms of $f_{nr}$. We may solve for $\nabla \Phi_n(t,\zeta)=0$ in two steps: (1) use $\partial_ t \Phi_n(t,\zeta) = 0$ to express $t=t(\zeta)$ as a function of $\zeta$, (2) 
plug the expression into $\partial_\zeta \Phi_n (t,\zeta)$ to obtain an equation for $\zeta$. 
This latter step breaks into the following two stages:
\begin{itemize}
\item[(2a)] substitute the expression of $t= t(\zeta)$ into the expression for $\Phi_n(t,\zeta)$ so as to obtain a function $\Phi_n(t(\zeta),\zeta)$ of $\zeta$ only;
\item[(2b)] set the derivative of $\Phi_n(t(\zeta),\zeta)$ with respect to $\zeta$ to zero,
\end{itemize}
which is valid since
\be 
  \frac{\dd}{\dd \zeta} \Phi_n(t(\zeta),\zeta) = \frac{\partial \Phi_n}{\partial \zeta}(t(\zeta),\zeta) + \frac{\partial \Phi_n}{\partial t} ( t(\zeta),\zeta) \frac{\dd t}{\dd \zeta}(t) = \frac{\partial \Phi_n}{\partial \zeta}(t(\zeta),\zeta).
\ee
By the definition of the Cram{\'e}r series, step (2a) gives 
\be
	\Phi_n(t(\zeta),\zeta)   = - q(\zeta) - \frac{(N_n- \zeta)^2}{2n\sigma^2} +  \frac{(N_n- \zeta)^3}{n^2} \sum_{j\geq 0} \lambda_j \Bigl(\frac{N_n- \zeta}{n}\Bigr)^j
\ee
Truncation of the asymptotic expansion on the right-hand side gives precisely the function $- f_{nr}(\zeta)$. For $r$ large enough, step (2b) shows that we may approximate 
\be \label{eq:phitrunc}
	\Phi_n(t_n,\zeta_n) = - f_{nr}(x_{nr})+ o(1), \quad n\sigma^2 q''(\zeta_n) = n\sigma^2 q''(x_{nr})+o(1)
\ee	
hence the equivalence of Eq.~\eqref{eq:bivariatehn} with the expression from Theorem~\ref{thm:large}.

\subsection{Application to stretched exponential laws}\label{sec:stretched}

Here we explain how to recover five theorems by A.~V.~Nagaev~\cite{nagaev} for stretched exponential variables. Let $\alpha \in (0,1)$, $c>0$, and 
\be \label{eq:stretched} 
	p(k) = c \exp(-k^\alpha),\quad q(k) =  k^\alpha - \log c \quad (k \in \N). 
\ee
We need not check Assumption~\ref{ass:analyticity2} since Theorem~\ref{thm:imsmall} for stretched exponential weights has already been proven in~\cite[Theorem 2.4.6]{ibragimov-linnik}. 

\begin{lemma} \label{lem:stretched}
	The probability weights~\eqref{eq:stretched} satisfy Assumptions~\ref{ass:convexity} and~\ref{ass:analyticity}, and we have 
	$$ x_n^* = \bigl[ \alpha(1-\alpha) n \sigma^2\bigr]^{1/(2-\alpha)},\quad 
	N_n^* = \frac{2-\alpha}{1-\alpha}x_n^*,\quad N_n^{**} = C_\alpha (n\sigma^2)^{1/(2-\alpha)}
	$$	
	with $C_\alpha = (2-\alpha)(2-2\alpha)^{- (1-\alpha)/(2-\alpha)}$. Moreover $\sqrt{n\sigma^2} q'(N_n)\to 0$ if and only if $N_n \gg n^{-1/(2-2\alpha)}$. 
\end{lemma}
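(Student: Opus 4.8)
The plan is to verify Assumptions~\ref{ass:convexity} and~\ref{ass:analyticity} for the explicit weights $p(k)=c\exp(-k^\alpha)$ and then compute $x_n^*$, $N_n^*$, $N_n^{**}$ directly from their definitions in Eq.~\eqref{eq:xnstar} and Lemma~\ref{lem:breakeven}. For Assumption~\ref{ass:convexity}, with $q(x)=x^\alpha-\log c$ on $(a,\infty)$ for any $a>0$, one has $q'(x)=\alpha x^{\alpha-1}$, $q''(x)=\alpha(\alpha-1)x^{\alpha-2}$, $q^{(3)}(x)=\alpha(\alpha-1)(\alpha-2)x^{\alpha-3}$; since $\alpha\in(0,1)$, (i) holds, and (ii)--(v) follow by reading off the ratios: $xq'(x)/\log x=\alpha x^\alpha/\log x\to\infty$, $|q''(x)|/(q'(x)/x)=1-\alpha$ is constant so (iii) holds with $c_1=c_2=1-\alpha$, $q^{(3)}(x)/(|q''(x)|/x)=2-\alpha$ is constant so (iv) holds, and $q'(x)/(q(x)/x)=\alpha x^\alpha/(x^\alpha-\log c)\to\alpha<1$ so (v) holds for large $x$ (shrink the domain if needed). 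For Assumption~\ref{ass:analyticity}, take $b>0$ and extend via $p(\xi)=c\exp(-\xi^\alpha)$ with $\xi^\alpha:=\exp(\alpha\,\mathrm{Log}\,\xi)$ using the principal branch, analytic on $\Re\xi>0$; on $\Re\xi\geq b$ one has $\Re(\xi^\alpha)=|\xi|^\alpha\cos(\alpha\arg\xi)\geq |\xi|^\alpha\cos(\alpha\pi/2)>0$ since $|\arg\xi|<\pi/2$, which gives the sub-exponential bound $|p(\xi)|\leq c\exp(-c'|\xi|^\alpha)\leq C_\eps\exp(\eps|\xi|)$ in (i) and, on the line $\Re\xi=b$, decay like $\exp(-c'(b^2+s^2)^{\alpha/2})$ faster than any polynomial, giving (ii).

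Next I would compute the critical scales. The inflection point solves $q''(x_n^*)=-1/(n\sigma^2)$, i.e. $\alpha(1-\alpha)(x_n^*)^{\alpha-2}=1/(n\sigma^2)$, hence $x_n^*=[\alpha(1-\alpha)n\sigma^2]^{1/(2-\alpha)}$. Then $N_n^*=x_n^*+n\sigma^2 q'(x_n^*)=x_n^*+n\sigma^2\alpha(x_n^*)^{\alpha-1}$, and using $n\sigma^2=(x_n^*)^{2-\alpha}/[\alpha(1-\alpha)]$ one gets $n\sigma^2\alpha(x_n^*)^{\alpha-1}=x_n^*/(1-\alpha)$, so $N_n^*=x_n^*(1+1/(1-\alpha))=\frac{2-\alpha}{1-\alpha}x_n^*$. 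For $N_n^{**}$, by Lemma~\ref{lem:breakeven}(b) it is characterized by $f_n(a)=f_n(x_n)$; taking $a\searrow 0$ (consistent with Figure~\ref{fig variational}'s convention $q(0)=0$, which here means discarding $-\log c$, or absorbing constants) this reads $q(N_n^{**})+ \tfrac{1}{2}(1+o(1))n\sigma^2 q'(x_n)^2 = (N_n^{**})^2/(2n\sigma^2)$ via Proposition~\ref{prop:truncated}; in the big-jump regime $x_n\sim N_n^{**}$ and $N_n^{**}\gg N_n^*$ one should be able to show the correction term is lower order, so to leading order $(N_n^{**})^\alpha=(N_n^{**})^2/(2n\sigma^2)$, i.e. $N_n^{**}=(2n\sigma^2)^{1/(2-\alpha)}$ — but the stated constant $C_\alpha=(2-\alpha)(2-2\alpha)^{-(1-\alpha)/(2-\alpha)}$ shows the correction is \emph{not} negligible, so one must instead solve $f_n(0)=f_n(x_n)$ exactly. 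Writing $x_n=N_n-n\sigma^2\alpha x_n^{\alpha-1}$ and $f_n(x_n)=x_n^\alpha+\tfrac12 n\sigma^2\alpha^2 x_n^{2\alpha-2}$ (exact, not asymptotic, since for these weights $q$ is a pure power), setting this equal to $N_n^2/(2n\sigma^2)$ and using the scaling substitution $N_n=u\,(n\sigma^2)^{1/(2-\alpha)}$, $x_n=v\,(n\sigma^2)^{1/(2-\alpha)}$ reduces everything to a dimensionless pair of equations $v=u-\alpha v^{\alpha-1}$, $v^\alpha+\tfrac12\alpha^2 v^{2\alpha-2}=u^2/2$, from which $C_\alpha$ (the value of $u$) can be extracted by elementary algebra; I expect the relation $v^\alpha = \tfrac12 v^2\cdot(\text{something})$ combined with $u-v=\alpha v^{\alpha-1}$ to collapse to $C_\alpha=(2-\alpha)(2-2\alpha)^{-(1-\alpha)/(2-\alpha)}$ after identifying $v=(2-2\alpha)^{1/(2-\alpha)}\cdot(\dots)$.

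Finally, the insensitivity condition: $\sqrt{n\sigma^2}\,q'(N_n)=\sqrt{n\sigma^2}\,\alpha N_n^{\alpha-1}\to 0 \iff n\sigma^2 N_n^{2\alpha-2}\to 0 \iff N_n^{2-2\alpha}\gg n\sigma^2 \iff N_n\gg (n\sigma^2)^{1/(2-2\alpha)}$; up to the constant $\sigma^2$ this is the asserted $N_n\gg n^{1/(2-2\alpha)}$ (the exponent in the paper's statement has a sign typo, $n^{-1/(2-2\alpha)}$ should read $n^{1/(2-2\alpha)}$, which is consistent with Corollary~\ref{cor:big-jump}).

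I expect the main obstacle to be pinning down $N_n^{**}$ with the exact constant $C_\alpha$: it requires treating the break-even equation $f_n(0)=f_n(x_n)$ without the $(1+o(1))$ sloppiness of Proposition~\ref{prop:truncated}, carefully solving the implicit equation $f_n'(x_n)=0$ for the actual minimizer $x_n$ (which is where the factor $2-2\alpha$ rather than a bare $2$ enters), and verifying that the sub-leading terms in the power-series-free identities indeed produce the stated closed form; everything else (Assumptions~\ref{ass:convexity},~\ref{ass:analyticity}, the formulas for $x_n^*$ and $N_n^*$, and the insensitivity scale) is a direct substitution.
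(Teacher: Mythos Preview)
Your proposal is correct and follows essentially the same route as the paper. The paper's proof is actually \emph{more} abbreviated than yours: it leaves Assumption~\ref{ass:convexity} entirely to the reader, checks Assumption~\ref{ass:analyticity} via the same bound $|p(\xi)|\leq c\exp(-|\xi|^\alpha\cos(\alpha\pi/2))$, solves $q''(x_n^*)=-1/(n\sigma^2)$ explicitly, and for $N_n^*$ and $N_n^{**}$ simply invokes the scaling relation~\eqref{eq:stretched-scaling} and cites~\cite{nagaev}, omitting the algebra. Your dimensionless substitution $N_n=u(n\sigma^2)^{1/(2-\alpha)}$, $x_n=v(n\sigma^2)^{1/(2-\alpha)}$ \emph{is} that scaling relation, and the algebra you anticipate does close: from $g'(v)=0$ one gets $u-v=\alpha v^{\alpha-1}$, and substituting into $g(v)=g(0)$ gives $(1-\alpha)v^\alpha=v^2/2$, hence $v=(2-2\alpha)^{1/(2-\alpha)}$ and then $u=v(2-\alpha)/(2-2\alpha)=C_\alpha$. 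You are also right that the exponent $n^{-1/(2-2\alpha)}$ in the statement is a sign typo for $n^{1/(2-2\alpha)}$; the paper's own proof in the appendix repeats this typo verbatim.
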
 
\noindent The proof of the lemma is sketched in Appendix~\ref{app:scales}. The critical scale $n^{-1/(2-\alpha)}$ is explained by a simple scaling relation: for $N_n = k n^{1/(2-\alpha)}$, we have 
\be  \label{eq:stretched-scaling}
	f_{n}\bigl(y n^{1/(2-\alpha)}\bigr) = n^{\alpha/(2-\alpha)} \Bigl( y^\alpha +  \frac{(k-y)^2}{2 \sigma^2} \Bigr) - \log c.
\ee
A careful examination of the expressions in Theorem~\ref{thm:critical} shows that the first summand dominates if $N_n\leq (1-\delta)N_n^{**}$ while the second dominates if $N_n\geq (1+\delta)N_n^{**}$ for some $\delta>0$. In particular, 
Theorem~\ref{thm:moderate} extends to $N_n\leq (1-\delta )N_n^{**}$, which corresponds to Theorem~1 in~\cite{nagaev}. Theorem~\ref{thm:critical} for $N_n \sim N_n^{**}$ is Theorem~4 in~\cite{nagaev}. For $N_n \geq (1+\delta) N_n^{**}$, we have 
\be 
	\P(S_n = n\mu + N_n) \sim \frac{1}{\sqrt{1- n \sigma^2 q''(x_{nr}) }} \e{- f_{nr}(x_{nr})}.
\ee 
This regime can be divided into three cases:
\begin{enumerate}[(a)] 
	\item  $N_n\ll n^{-1/(2-2\alpha)}$ corresponds to Theorem~2 in~\cite{nagaev}.
	\item When $N_n$ is of the order of $n^{-1/(2-2\alpha)}$, the corrections from the Cram{\'e}r series are irrelevant and 
	\be \label{eq:medium-large}
		\P(S_n = \mu n + N_n) \sim n \exp\bigl( - f_n(x_n)\bigr),
	\ee
	i.e., we may choose $r=0$. This corresponds to Theorem~6 from the erratum~\cite{nagaev-erratum}, replacing Theorem~5 in the original article~\cite{nagaev}. The statement actually extends to $N_n \gg n^{-1/(3-3\alpha)}$.  Indeed $ q'(x_{nr}) \sim (N_n - x_{nr})/(n\sigma^2) = o(N_n/n)$ yields $N_n - x_{nr} = (1+o(1))n \sigma^2 \alpha N_n^{\alpha -1} $ and 
	\be
		f_{nr}(x_{nr}) = f_n(x_{nr}) + O\bigl( n N_n^{- (3 - 3\alpha)}\bigr), 
	\ee
	and one can check that $f_n(x_{nr}) = f_n(x_n) +o(1)$ if $N_n\gg n^{-1/(3-3\alpha)}$. 
	\item $N_n\gg n^{-1/(2-2\alpha)}$ corresponds to Theorem~3 in~\cite{nagaev} and our Corollary~\ref{cor:big-jump}. 
\end{enumerate} 

\subsection{Application to logarithmic hazard functions} \label{sec:logarithmic}

Here we specialize to
\be \label{eq:logarithmic}
	p(k) = c\exp\bigl( - (\log k)^\beta\bigr),\quad q(k) = - \log c+ (\log k)^\beta 
\ee
with $\beta>2$.\footnote{For $\beta \in (1,2]$,  $x q'(x)= \beta (\log x)^{\beta-1} \to \infty$ but the stronger condition $x q'(x)/\log x\to \infty$ from Assumption~\ref{ass:convexity}(ii) fails. 
We suspect that this restriction is technical and could be lifted with more detailed estimates, but a proof or disproof is beyond this article's scope.
} 

\begin{lemma} \label{lem:oklog}
	The weights~\eqref{eq:logarithmic} satisfy Assumptions~\ref{ass:convexity}--~\ref{ass:analyticity2}. Moreover
	$$	N_n^*  \sim 2 x_n^*\sim  2 \sqrt{2^{1-\beta} \beta n \sigma^2 (\log n)^{\beta-1}}, 
			\quad  N_n^{**}  \sim \sqrt{2 n  \sigma^2 (\log n)^\beta}, 
		$$
		and $\sqrt{n\sigma^2} q'(N_n)\to 0$ if and only if $N_n\gg \sqrt{n}  (\log n)^{\beta-1}$.  
\end{lemma}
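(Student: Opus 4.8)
The plan is to verify Assumptions~\ref{ass:convexity}--\ref{ass:analyticity2} directly from the explicit form of $q$, and then to read off the four asymptotic relations from the defining equations of the critical scales, exactly as for stretched exponential weights (Lemma~\ref{lem:stretched} and Appendix~\ref{app:scales}). Writing $q(x)=-\log c+(\log x)^\beta$, the key point is that every derivative of $q$ is a product of a power of $\log x$ and a power of $1/x$:
\[
  q'(x)=\frac{\beta(\log x)^{\beta-1}}{x},\qquad
  q''(x)=\frac{\beta(\log x)^{\beta-2}}{x^2}\bigl((\beta-1)-\log x\bigr),
\]
\[
  q^{(3)}(x)=\frac{\beta(\log x)^{\beta-3}}{x^3}\bigl(2(\log x)^2-3(\beta-1)\log x+(\beta-1)(\beta-2)\bigr),
\]
so for $a$ large enough one has $q'>0$, $q''<0$, $q^{(3)}>0$ and $p=\e{-q}$ on $(a,\infty)$, the signs being dictated by the polynomial factors. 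Assumption~\ref{ass:convexity} is then routine: (iii) and (iv) hold because $x|q''(x)|/q'(x)=1-(\beta-1)/\log x\to1$ and $xq^{(3)}(x)/|q''(x)|\to2$; (v) holds for every fixed $\alpha\in(0,1)$ since $xq'(x)/q(x)=\beta(\log x)^{\beta-1}/((\log x)^\beta-\log c)\to0$; and (ii) is $xq'(x)/\log x=\beta(\log x)^{\beta-2}\to\infty$, which is precisely the point at which $\beta>2$ enters. For Assumption~\ref{ass:analyticity} I would take $p(\xi)=c\,\e{-(\mathrm{Log}\,\xi)^\beta}$, analytic on $\Re\xi>0$; for $b$ large the argument of $\mathrm{Log}\,\xi=\log|\xi|+\ii\arg\xi$ stays close to $0$ on $\{\Re\xi\ge b\}$, so $\Re(\mathrm{Log}\,\xi)^\beta=|\mathrm{Log}\,\xi|^\beta\cos\bigl(\beta\arg\mathrm{Log}\,\xi\bigr)>0$ and in fact $\to+\infty$, whence $|p(\xi)|$ is bounded there and (i) holds with a constant free of $\eps$; condition (ii) follows because along $\Re\xi=b$ one has $\Re(\mathrm{Log}(b+\ii s))^\beta\ge\tfrac12(\log|s|)^\beta$ for $|s|$ large, and $(\log|s|)^\beta$ beats every power of $\log|s|$ as $\beta>1$.

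For Assumption~\ref{ass:analyticity2} I would keep $q(\xi)=-\log c+(\mathrm{Log}\,\xi)^\beta$, analytic on $\Re\xi\ge b$ once $b>e^{\beta-1}$, with the derivative formulas above valid after replacing $\log x$ by $\mathrm{Log}\,\xi$ (in particular $\xi q'(\xi)=\beta(\mathrm{Log}\,\xi)^{\beta-1}$). Condition (iii) is the bound $|\xi q^{(3)}(\xi)/q''(\xi)|\to2$ as $|\mathrm{Log}\,\xi|\to\infty$, uniformly on $\{\Re\xi\ge b\}$. For (ii), on $|\xi|=r$ one writes $(\mathrm{Log}\,\xi)^{\beta-1}=(\log r)^{\beta-1}(1+\ii\arg\xi/\log r)^{\beta-1}$ and reads off $\Im(\xi q'(\xi))=O\bigl(\arg\xi\,(\log r)^{\beta-2}\bigr)$, which is $\ll\beta(\log r)^{\beta-1}\sin(\arg\xi)=\Im(\xi q'(r))$ on the relevant arcs (comparing the two sides directly where $\arg\xi$ is bounded away from $0$, and comparing $\arg\xi$-derivatives at $\arg\xi=0$ otherwise). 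For (i), a direct computation gives $\tfrac{\dd}{\dd s}\Re q(b+\ii s)=\tfrac{\beta(\log s)^{\beta-1}}{s}\bigl(1+o(1)\bigr)>0$ for $|s|$ large, so $s\mapsto\Re q(b+\ii s)$ is increasing there, and integrating yields $\Re q(b+\ii s)=\Re q(z_r)+(\log s)^\beta-(\log r)^\beta+O\bigl((\log s)^{\beta-2}\bigr)$ for $s\ge\sqrt{r^2-b^2}$; splitting the integral $\int_{\sqrt{r^2-b^2}}^\infty\e{-\Re q(b+\ii s)}\,\dd s$ (which bounds the left-hand side of (i)) at $s=r^2$ — on $[\sqrt{r^2-b^2},r^2]$ the integrand is $\le\e{-\Re q(z_r)}$ and the length is $\le r^2$, while on $[r^2,\infty)$ one has $\Re q(b+\ii s)-\Re q(z_r)\ge\tfrac14(\log s)^\beta$ (using $\beta>1$), so that piece is $o\bigl(\e{-\Re q(z_r)}\bigr)$ — gives the required $\e{-\Re q(z_r)+O(\log r)}$.

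The four asymptotics then follow by bootstrap. From $|q''(x_n^*)|=1/(n\sigma^2)$ one gets $(x_n^*)^2=\beta n\sigma^2(\log x_n^*)^{\beta-1}(1+o(1))$; taking logarithms gives $\log x_n^*\sim\tfrac12\log n$, and feeding this back gives $x_n^*\sim\sqrt{2^{1-\beta}\beta n\sigma^2(\log n)^{\beta-1}}$. Since $n\sigma^2q'(x_n^*)=n\sigma^2\beta(\log x_n^*)^{\beta-1}/x_n^*\sim(x_n^*)^2/x_n^*=x_n^*$, the identity $N_n^*=x_n^*+n\sigma^2q'(x_n^*)$ yields $N_n^*\sim2x_n^*$. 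For $N_n^{**}$ one uses $N_n^{**}\gg N_n^*$ (Lemma~\ref{lem:critsequences}), so that Proposition~\ref{prop:truncated} and Eq.~\eqref{eq:fnrq} reduce the defining relation $f_n(a)=f_n(x_n)$ of Lemma~\ref{lem:breakeven} to $(N_n^{**})^2/(2n\sigma^2)\sim q(N_n^{**})\sim(\log N_n^{**})^\beta$, which after bootstrapping $\log N_n^{**}\sim\tfrac12\log n$ gives the stated order of growth. Finally $\sqrt{n\sigma^2}\,q'(N_n)=\beta\sqrt{n\sigma^2}\,(\log N_n)^{\beta-1}/N_n\to0$ is equivalent to $N_n\gg\sqrt{n\sigma^2}\,(\log N_n)^{\beta-1}$, which, once $N_n$ is of polynomial order $\sqrt n$ up to logarithms so that $\log N_n\sim\tfrac12\log n$, becomes $N_n\gg\sqrt n\,(\log n)^{\beta-1}$.

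The step I expect to be the main obstacle is Assumption~\ref{ass:analyticity2}(i): because $q$ grows only like a power of a logarithm, the naive bound on the tail integral along $\Re\xi=b$ is by a hair too weak, and one genuinely needs both the monotonicity of $\Re q$ along that line and the dyadic split above. A secondary subtlety is that Assumption~\ref{ass:analyticity2}(ii) holds only on the arc of $\{|\xi|=r\}$ actually traversed in the proof of Theorem~\ref{thm:imsmall} — near the negative real axis the inequality reverses, so one has to keep track of which part of the circle enters.
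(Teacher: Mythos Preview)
Your approach is essentially the paper's: direct verification of the assumptions from the explicit derivatives of $q$, followed by bootstrap for the critical scales. Two remarks.

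For Assumption~\ref{ass:analyticity2}(i) the paper is slightly slicker than your dyadic split: it bounds $(\log y)^\beta-(\log y_0)^\beta\ge\beta(\log y_0)^{\beta-1}\log(y/y_0)$ by convexity and integrates the resulting power of $y/y_0$ in closed form, obtaining the factor $y_0/(\beta(\log y_0)^{\beta-1}-1)=\e{O(\log r)}$ directly. Your split at $s=r^2$ is a valid alternative.

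There is, however, a genuine (though reparable) gap in your treatment of $N_n^{**}$. You cite Lemma~\ref{lem:critsequences} for $N_n^{**}\gg N_n^*$, but that lemma gives only $N_n^*<N_n^{**}$; for stretched exponentials the two are of the \emph{same} order, so the separation is specific to logarithmic hazard and is part of what must be proven here. Without it you cannot invoke Eq.~\eqref{eq:fnrq}, which needs $N_n\gg N_n^*$. The paper sidesteps this by solving the system $f_n'(x_n)=0$, $f_n(x_n)=f_n(a)$ directly (a quadratic in $x_n$, then one in $N_n^2$). Your shortcut can be rescued: once $x_n^*$ is known, for $N_n=KN_n^*$ with any fixed $K>1$ one has $f_n(a)\sim K^2(N_n^*)^2/(2n\sigma^2)$ of order $(\log n)^{\beta-1}$, while $f_n(x_n)\ge q(x_n)\ge q((K-1)N_n^*)$ is of order $(\log n)^\beta$ by Lemma~\ref{lem:crit-magnitudes}; hence $f_n(x_n)>f_n(a)$ and $N_n^{**}>KN_n^*$ for every $K$, i.e.\ $N_n^{**}\gg N_n^*$, after which your bootstrap goes through.
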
 

\noindent The lemma is proven Appendix~\ref{app:scales}. Notice that unlike the stretched exponential case (Lemma~\ref{lem:stretched}), $N_n^{**}$ is much larger than $N_n^*$. The scaling relation~\eqref{eq:stretched-scaling} is modified as follows: for $N_n = k \sqrt{n\sigma^2 (\log n)^{\beta-1}} \sim k x_n^*$, we have 
\be
	f_n(y x_n^*) = (\log x_n^*)^\beta + \beta (\log n)^{\beta-1} \Bigl(\log y+ \frac{(k-y)^2}{2}\Bigr) + o\Bigl( (\log n)^{\beta-1}\Bigr)
\ee 
and $(\log x_n^*)^\beta \sim (\log n)^\beta \sim (\log N_n)^\beta$. 

Our results may now be applied to obtain a sharp boundary for the big-jump domain. 
\begin{theorem}\label{thm:loghazard}
	Let $p(k)$ be as in Eq.~\eqref{eq:logarithmic} with $\beta >2$ and $N_n \gg \sqrt{n}$.  Then $\P(S_n = \mu n + N_n) \sim n \P(X=N_n)$ if and only if $N_n \gg \sqrt{n}(\log n)^{\beta-1}$. 
\end{theorem}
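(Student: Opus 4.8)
The plan is to apply Corollary~\ref{cor:big-jump} together with the asymptotic formulas for the critical scales from Lemma~\ref{lem:oklog}, and then to extend the ``$\sim n\P(X=N_n)$'' conclusion down to the claimed threshold $\sqrt{n}(\log n)^{\beta-1}$ using Theorem~\ref{thm:moderate}. First I would note that by Lemma~\ref{lem:oklog} the hypothesis $\sqrt{n\sigma^2}q'(N_n)\to 0$ appearing in Corollary~\ref{cor:big-jump} is equivalent to $N_n \gg \sqrt{n}(\log n)^{\beta-1}$, so that is exactly the condition that must appear in the theorem. For the ``if'' direction with $N_n \gg N_n^{**} \sim \sqrt{2n\sigma^2(\log n)^\beta}$, Corollary~\ref{cor:big-jump} applies directly once one checks $\liminf \frac{N_n^2}{2n\sigma^2}/q(N_n) > 1$; since $q(N_n)\sim(\log N_n)^\beta \sim (\log n)^\beta$ (because $\log N_n \asymp \log n$ on the relevant range) while $N_n^2/(2n\sigma^2) \gg (\log n)^\beta$ exactly when $N_n \gg N_n^{**}$, this ratio condition holds whenever $N_n \gg N_n^{**}$, so Corollary~\ref{cor:big-jump} gives the result there.

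The remaining work is to cover the window $\sqrt{n}(\log n)^{\beta-1} \ll N_n \lesssim N_n^{**}$, i.e.\ the regime below the break-even scale, using Theorem~\ref{thm:moderate} and Theorem~\ref{thm:critical}. The point is that in Theorem~\ref{thm:critical} the two competing summands are $\exp(-(1+o(1))N_n^2/(2n\sigma^2))$ (small-steps, from $f_{nr}$ evaluated near $0$) and $n\exp(-f_{nr}(x_{nr})) = (1+o(1))n\exp(-q(N_n)+\tfrac12 n\sigma^2 q'(N_n)^2)$ (big-jump, using Proposition~\ref{prop:truncated} and Eq.~\eqref{eq:fnrq}). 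When $N_n \ll N_n^{**}$ we have $q(N_n) \ll N_n^2/(2n\sigma^2)$, so the big-jump term dominates and the small-steps term is negligible; moreover $\tfrac12 n\sigma^2 q'(N_n)^2 = o(1)$ precisely when $\sqrt{n\sigma^2}q'(N_n)\to 0$, i.e.\ $N_n \gg \sqrt{n}(\log n)^{\beta-1}$, so the big-jump term is $(1+o(1))n e^{-q(N_n)} = (1+o(1))n\P(X=N_n)$. One also needs $1 - n\sigma^2|q''(x_{nr})| \to 1$, which holds since $N_n \gg N_n^*$ (automatic because $N_n \gg \sqrt{n}(\log n)^{\beta-1} \gg \sqrt{n}(\log n)^{(\beta-1)/2} \asymp N_n^*$). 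For the range $N_n \gg n^{1/(2-\alpha)}$ — here read as $N_n \gg N_n^{**}$ in the logarithmic case, so this overlaps with the direct Corollary application — Theorem~\ref{thm:large} applies; in the intermediate band $N_n^* \ll N_n \lesssim N_n^{**}$ one uses Theorem~\ref{thm:critical} as just described. Assembling these three overlapping ranges covers all $N_n \gg \sqrt{n}(\log n)^{\beta-1}$.

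For the ``only if'' direction, suppose $N_n \not\gg \sqrt{n}(\log n)^{\beta-1}$, i.e.\ along a subsequence $N_n \leq C\sqrt{n}(\log n)^{\beta-1}$; then along that subsequence $\sqrt{n\sigma^2}q'(N_n) \not\to 0$, so $\tfrac12 n\sigma^2 q'(N_n)^2$ does not tend to $0$ and $f_{nr}(x_{nr}) = q(N_n) - \tfrac12(1+o(1))n\sigma^2 q'(N_n)^2$ differs from $q(N_n)$ by a non-vanishing amount in the exponent; hence $n\exp(-f_{nr}(x_{nr}))$ is not asymptotically equivalent to $n e^{-q(N_n)}$, and by Theorem~\ref{thm:large}/Theorem~\ref{thm:critical} (whichever applies, noting the small-steps term only makes matters worse for the equivalence) the claimed asymptotic $\P(S_n=\mu n+N_n)\sim n\P(X=N_n)$ fails. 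One subtlety: on the subsequence one must still have $N_n \gg \sqrt n$ so that one of the theorems applies; if instead $N_n = O(\sqrt n)$ the statement is outside the theorem's scope but also $n\P(X=N_n)$ is exponentially small while $\P(S_n = \mu n + N_n) \asymp n^{-1/2}$ by the local CLT, so the equivalence fails trivially.

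The main obstacle I anticipate is the bookkeeping at the boundary scale $N_n \asymp N_n^{**}$ and, more delicately, confirming that for all $N_n \ll N_n^{**}$ (not just $N_n \gg N_n^{**}$) the big-jump summand in Theorem~\ref{thm:critical} genuinely dominates the small-steps summand — this requires comparing $N_n^2/(2n\sigma^2)$ against $q(N_n) = (\log N_n)^\beta + O(1)$ together with the polynomial prefactors $n$ versus $n^{-1/2}$ and checking the Cramér-corrected exponent $N_n^3 n^{-2}\sum\lambda_j(N_n/n)^j$ is negligible relative to $N_n^2/(2n\sigma^2)$ on this range. These are the ingredients already flagged in the discussion following Corollary~\ref{cor:big-jump} (``Theorem~\ref{thm:moderate} should allow us to extend the domain of validity of the corollary to $N_n \gg N_n^{**}$''), so the proof is essentially a matter of making that sentence precise for the specific weights~\eqref{eq:logarithmic} using the explicit scales from Lemma~\ref{lem:oklog}.
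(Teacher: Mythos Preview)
Your proposal contains a basic ordering error that propagates through both directions. For $\beta>2$ you have $\beta-1>\beta/2$, hence
\[
I_n:=\sqrt{n}(\log n)^{\beta-1}\ \gg\ \sqrt{n}(\log n)^{\beta/2}\ \asymp\ N_n^{**},
\]
not the other way around. The paper's first move is precisely to record this: $I_n\gg N_n^{**}$. Consequently the ``if'' direction is immediate from Corollary~\ref{cor:big-jump}, since $N_n\gg I_n$ already forces $N_n\gg N_n^{**}$ and the ratio condition $\liminf N_n^2/(2n\sigma^2 q(N_n))>1$ follows. The ``window $\sqrt{n}(\log n)^{\beta-1}\ll N_n\lesssim N_n^{**}$'' you spend a paragraph on is empty. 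Worse, the argument you give for that window is backwards: for $N_n\ll N_n^{**}$ one has $N_n^2/(2n\sigma^2)\ll q(N_n)$ (that is the very definition of being below the break-even scale), so the \emph{small-steps} term dominates there, not the big-jump term.

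Your ``only if'' direction has a genuine gap. You invoke Theorem~\ref{thm:critical} and Eq.~\eqref{eq:fnrq}, but both require $N_n\gg N_n^*$ (equivalently $\liminf N_n/N_n^*>1$). On the subsequence where $N_n\le C I_n$ you may perfectly well have $N_n=O(N_n^*)\asymp\sqrt{n}(\log n)^{(\beta-1)/2}$, and then neither Theorem~\ref{thm:critical} nor the formula $f_{nr}(x_{nr})=q(N_n)-\tfrac12(1+o(1))n\sigma^2 q'(N_n)^2$ is available. The paper handles this as a separate Case~2: for $N_n=O(N_n^*)$ one has $N_n=o(N_n^{**})$, so $N_n^2/(2n\sigma^2)=o((\log N_n)^\beta)$, and Theorems~\ref{thm:moderate}--\ref{thm:critical} give $\log\bigl(\P(S_n=n\mu+N_n)/[n\P(X=N_n)]\bigr)\ge -N_n^2/(2n\sigma^2)+(\log N_n)^\beta+O(\log n)\to\infty$. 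In Case~1 ($N_n^*\ll N_n=O(I_n)$) the paper uses Theorem~\ref{thm:critical} as a \emph{lower bound} (drop the small-steps summand, bound $1-n\sigma^2|q''(x_{nr})|\le 1$) to get $\liminf \P(S_n=n\mu+N_n)/[n\P(X=N_n)]\ge \liminf\exp((1+o(1))n\sigma^2 q'(N_n)^2)>1$; your sketch gestures at this but does not make the lower-bound structure explicit, and your phrase ``the small-steps term only makes matters worse'' is doing real work that should be stated as an inequality.
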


\noindent The ``if'' part of the theorem is actually a special case of \cite[Theorem~8.2]{denisov-dieker-shneer} and as such not new. The ``only if'' part shows that the boundary derived in~\cite{denisov-dieker-shneer} is in fact sharp. 

\begin{proof} [Proof Theorem \ref{thm:loghazard}]
	Let $I_n:= \sqrt{n\sigma^2}(\log n)^{\beta-1}$ and notice $I_n \gg N_n^{**}$ for $\beta >2$. Suppose that $N_n \gg I_n$. Then we have, in particular, $N_n \gg N_n^{**}$. Write $N_n = \alpha_n N_n^{**}$ with $\alpha_n \to \infty$. Then 
	\be \label{eq:logestimate}
		\frac{N_n^2/(2 n \sigma^2)}{(\log N_n)^\beta}= \alpha_n^2 \Bigl( 1 + o(1) \frac{\log \alpha_n}{\log n}\Bigr)^{-\beta} \geq \frac{\alpha_n^2}{(\log \alpha_n)^\beta} \to \infty
	\ee
	and it follows from Corollary~\ref{cor:big-jump} that $\P(S_n = n\mu + N_n )\sim n \P(X=n)$, hence $N_n \gg N_n^{**}$ is indeed a sufficient condition. In order to check that it is necessary, we treat the case $N_n = O(I_n)$ with Theorems~\ref{thm:moderate} and~\ref{thm:critical}. 
	
	\emph{Case 1}: $N_n^* \ll N_n = O(I_n)$. In Theorem~\ref{thm:critical} we obtain a lower bound by neglecting  the first contribution and estimating $1- n \sigma^2 |q''(x_{nr})| \leq 1$. Combining with Proposition~\ref{prop:truncated}, we find 
	\begin{multline}
		\P(S_n = n \mu + N_n) \geq  n \exp( - f_{nr}(x_{nr}) + o(1)) \\
			\geq n \exp\Bigl( - q(N_n) + (1+o(1)) n \sigma^2 q'(N_n)^2 + o(1)\Bigr)
	\end{multline}
	hence in view of $N_n = O(I_n)$ and Lemma~\ref{lem:oklog}
	\be
		\liminf_{n\to \infty } \frac{\P(S_n = n \mu + N_n)}{n \P(X=N_n)} \geq \liminf_{n\to \infty }\exp\bigl( (1+o(1) n \sigma^2 q'(N_n)^2 )\bigr) >1. 
	\ee
	
	\emph{Case 2}: $N_n = O(N_n^*)$. Then we have in particular $N_n= o(N_n^{**})$. Write 
	 $N_n = \alpha_n N_n^{**}$ with $\alpha_n \to 0$, then 
	\be 
		\frac{N_n^2}{2 n \sigma^2 (\log N_n)^\beta} \leq \frac{\alpha_n^2 (\log n)^\beta}{(\log \sqrt{n})^\beta}\to 0 
	\ee
	and Theorems~\ref{thm:moderate} and~\ref{thm:critical} show
	\be
		\log \frac{\P(S_n = n \mu + N_n)}{n \P(X = N_n)} \geq - \frac{N_n^2}{2 n \sigma^2 }
		+ (\log N_n)^\beta + O(\log n)\to \infty.
	\ee
\end{proof}


\section{Proof strategy} \label{sec:strategy}

Here we explain the strategy for the proof of Theorems~\ref{thm:moderate}--\ref{thm:large}. We focus on the case $N_n = o(n)$ and Theorem~\ref{thm:critical}.  Set $m = \mu n + N_n$. 
We start from the observation that $\P(S_n = m)$ is equal to $[z^m]\, G(z)^m$, the coefficient of $z^m$ in the expansion of $G(z)^m$, which in turn is given by contour integrals 
\be
\label{Voila Cauchy}
   [z^m] G(z)^n = \frac{1}{2\pi \ii} \oint \frac{G(z)^n}{z^m} \frac{\dd z}{z} 
    = \frac{1}{2\pi \ii} \int_{0}^{2 \pi \ii} \e{n \phi(t) - m t} \dd t. 
\ee
The contour integral can be taken over any circle centered at the origin with radius $r\leq 1$.
A steepest descent ansatz would look for a point $z_n$ such that $z_n G'(z_n) = m/n = \mu+ N_n/n$, or $\eta_n$ with $\phi'(\eta_n)  =\mu + N_n/n$, and then integrate over $|z| = |z_n|$ (or $\Re t = \Re \eta_n$). However in the regime $m/n > G'(1)= \mu$ that we investigate there is no such point, and instead we follow an approach that is  in the spirit of singularity analysis~\cite{FS} but with several novel ingredients. 
Crucially, the generating function $G(z)$ \emph{does not} fall into the class of functions which Flajolet and Sedgwick call ``amenable to singularity analysis'' \cite{FS}[Chapter VI]. \\

\textbf{Step 1: Analytic extension to slit plane.} Observe that $G(z)$ has an analytic extension to the slit plane $\bbC\backslash [1,\infty)$. This is proven with the help of the Lindel{\"o}f integral \cite{Lind,FGS}, see Proposition~\ref{prop:lindeloef}. The key ingredient here is that $p(\zeta)$ 
is analytic in a complex half-plane containing the integers $k\in \bbN$ and growth slower than $\exp( (\pi - \eps)|\zeta|)$ as $\zeta \to \infty$.   \\

\textbf{Step 2: Behavior near the dominant singularity and along the slit.} 
The Lindel{\"o}f integral actually shows that the analytic extension $G(z)$ has well-defined limits as $z$ approaches the slit $[1,\infty)$  from above or below, i.e., the limits $\lim_{\eps\searrow 0} G(\e{t} + \ii \eps)$ and $\lim_{\eps\searrow 0} G(\e{t}- \ii \eps)$ exist for all $t\in \bbR$.   Moreover the imaginary part along the slit is given by a Bromwich integral, 
\be \label{eq:bromwich}
  \lim_{\eps \searrow 0} \Im  G(\e{t}+ \ii \eps) = \frac{1}{2\ii} \int_{1/2 - \ii \infty}^{1/2+ \ii\infty} \e{t \zeta} p(\zeta) \dd \zeta \quad (t\geq 0).
\ee 
 The line of integration $\Re \zeta = 1/2$ can be replaced by any other line $\Re \zeta = x>0$. 
As $t \searrow 0$, the imaginary part vanishes faster than any power of $t$, whereas the real part can be approximated to arbitrarily high order by a Taylor polynomial. \\

\begin{centering}
\begin{figure}[htb]
\begin{picture}(0,0)%
\includegraphics{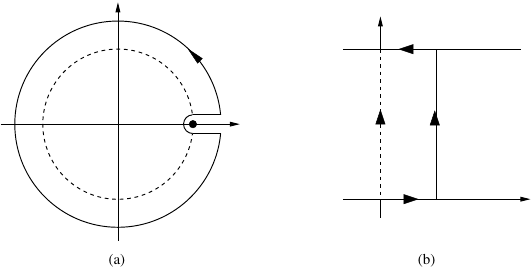}%
\end{picture}%
\setlength{\unitlength}{1973sp}%
\begingroup\makeatletter\ifx\SetFigFont\undefined%
\gdef\SetFigFont#1#2#3#4#5{%
  \reset@font\fontsize{#1}{#2pt}%
  \fontfamily{#3}\fontseries{#4}\fontshape{#5}%
  \selectfont}%
\fi\endgroup%
\begin{picture}(8499,4275)(1114,-5074)
\put(3976,-1336){\makebox(0,0)[lb]{\smash{{\SetFigFont{7}{8.4}{\rmdefault}{\mddefault}{\updefault}{\color[rgb]{0,0,0}$|z| = \e{\varepsilon}$}%
}}}}
\put(3226,-2011){\makebox(0,0)[lb]{\smash{{\SetFigFont{7}{8.4}{\rmdefault}{\mddefault}{\updefault}{\color[rgb]{0,0,0}$|z|=1$}%
}}}}
\put(8026,-4261){\makebox(0,0)[lb]{\smash{{\SetFigFont{8}{9.6}{\rmdefault}{\mddefault}{\updefault}{\color[rgb]{0,0,0}$\varepsilon$}%
}}}}
\put(6976,-4186){\makebox(0,0)[lb]{\smash{{\SetFigFont{8}{9.6}{\rmdefault}{\mddefault}{\updefault}{\color[rgb]{0,0,0}$0$}%
}}}}
\put(6751,-1486){\makebox(0,0)[lb]{\smash{{\SetFigFont{8}{9.6}{\rmdefault}{\mddefault}{\updefault}{\color[rgb]{0,0,0}$2\pi\ii$}%
}}}}
\end{picture}%
\caption{Contour integrals in the $z$-plane (a) and in the $t$-plane (b).  Dotted lines: Eq. \eqref{Voila Cauchy}. Solid lines: deformed contours in Eq.\ \eqref{eq:rectangle}. Recall that $z = \e{t}$. Later $\eps= \eta_n$ will be chosen in a judicious way. For $N_n\ll N_n^{**}$, the dominant contribution should come from the horizontal pieces of the deformed contour in the $t$-plane. For $N_n\gg N_n^{**}$, the dominant contribution should instead be from the vertical line. 
}
\label{fig contours}
\end{figure}
\end{centering}

\textbf{Step 3: Contour integrals.} We may now deform the contour of integration: in the $z$-plane, we replace the circle of radius $1$ by a Hankel-type contour consisting of a circle of radius $\e{\eps}$ and a piece hugging the segment $[1,\eps)$, see Figure~\ref{fig contours}. In the $t$-plane, we replace the vertical segment joining $0$ and $2\pi \ii$ by the three other sides of the rectangle with corners $0$, $\eps$, $\eps + 2 \pi \ii$, $2 \pi \ii$. This yields 
\begin{multline} \label{eq:rectangle}
  [z^m] G(z)^n   = \frac{1}{2\pi \ii}\Bigl( \int_0^\eps \e{n \phi(t) - mt } \dd t + \int_0^{2\pi} \e{n \phi(\eps+\ii \theta) - m(\eps + \ii \theta) } \ii \dd \theta \\
   -  \int_0^\eps \e{n \phi(t+2 \pi \ii) - m(t+ 2\pi \ii) }\dd t \Bigr).
\end{multline}
We focus on $N_n = o(n)$ and choose $\eps = \eta_n$ as the solution of 
\be \label{eq:etan}
	\Re\varphi'(\eta_n) = \frac{m}{n} =  \mu + \frac{N_n}{n}.
\ee
Notice $\eta_n\sim N_n/(n\sigma^2)$. 
Using the identities 
\be
   G(\overline{z})= \overline{G(z)}, \quad \forall t\geq 0:\ \phi(t+2\pi \ii) = \overline{\phi(t)}, 
\ee
Eq.~\eqref{eq:rectangle} becomes 
\be \label{eq:hv}
	[z^m] G(z)^n = H_n + V_n
\ee
with 
\be \label{eq:hvdef}
\begin{aligned}
   H_n & = \frac{1}{\pi} \int_0^{\eta_n} \e{n \Re \phi(t) - m t} \sin \bigl(n \Im \phi(t)\bigr) \dd t \\
   V_n & =  \frac{1}{\pi} \int_0^{\pi}\e{n \Re \phi(\eta_n + \ii \theta) - m \eta_n}\cos \bigl(n \Im \phi(\eta_n+\ii \theta) \bigr) \dd \theta.
  \end{aligned}
\ee
Standard arguments show that the dominant contribution to $V_n$ come from small $\theta$. Since $\Im G(\e{t}) \to 0$ faster than any power of $t$ as $t\searrow 0$ and 
\be
   \Im G(\e{t}) = \Im \e{\phi(t)} = \e{\Re \phi(t)} \Im \phi(t) \sim \Im \phi(t), 
\ee
we may drop the trigonometric functions from Eq.~\eqref{eq:hvdef} and find 
\be
	H_n  \sim \frac{n}{\pi} \int_0^{\eta_n} \e{n \Re \phi(t) - m t}  \Im G(\e{t})\dd t,\qquad
	V_n  \sim \frac{1}{\pi} \int_0^{\pi}\e{n \Re \phi(\eta_n + \ii \theta) - m \eta_n} \dd \theta.
\ee
The vertical contribution is evaluated with the help of a Gaussian approximation around $\theta =0$, which yields 
\be \label{eq:veval}
	V_n \sim \frac{1}{\sqrt{2\pi n \Re \varphi''(\eta_n)}} \e{n \Re \varphi(\eta_n) - m \eta_n}. 
\ee
With Definition~\ref{def:cramer}, we recognize in Eq.~\eqref{eq:veval} the asymptotic expression from Theorem~\ref{thm:moderate} and obtain
\be \label{eq:vnfinal}
	V_n \sim \frac{1}{\sqrt{2\pi n \sigma^2}}  \exp\Bigl( - \frac{N_n^2}{2n\sigma^2}+ \frac{N_n^3}{n^2} \sum_{j=0}^{r-1} \lambda_j \Bigl( \frac{N_n}{n}\Bigr)^j\Bigr).  
\ee
The evaluation of $H_n$ is more involved. As a preliminary step,  we express $\Im G(\e{t})$ through the Bromwich integral and find 
\be \label{eq:dxidt}
	H_n \sim \frac{n}{2 \pi \ii} \int_0^{\eta_n} \Bigl(\int_{1/2- \ii \infty}^{1/2+\ii \infty}  \e{\Phi_n(t,\zeta)} \dd \zeta \Bigr) \dd t
\ee
with $\Phi_n$ as in Eq.~\eqref{eq:sndef}. 
 \\

\textbf{Step 4: Critical points of $\Phi_n(t,\zeta)$.} 
In order to apply a Gaussian approximation to the bivariate integral~\eqref{eq:dxidt}, we look for a critical points $(t_n,\zeta_n)$ of $\Phi_n$ with $t_n \in (0,\eta_n)$ and $\zeta_n \in (0,\infty)$. 
The gradient $\nabla \Phi_n(t_n,\zeta_n)$ vanishes if and only if 
\be \label{eq:critsysta}
\left\{ 
\begin{aligned} 
  t_n & = q'(\zeta_n),\\
  \Re \bigl( \phi'(t_n) - \phi'(0)\bigr) & = \frac{N_n - \zeta_n}{n}.
\end{aligned}
\right.
\ee
Since $\Re\varphi'(t) = \mu + \sigma^2 t+ O(t^2)$ as $t\searrow 0$, Eq.~\eqref{eq:critsysta} implies 
\be
	q'(\zeta_n) \sim \frac{N_n- \zeta_n}{n\sigma^2}. 
\ee
We recognize the equation for the critical points of $f_n$. Lemma~\ref{lem:avar} suggests the following: for $N_n\ll N_n^*$, there should be no critical point, for $N_n^* \ll N_n\ll n$, there should be two. Let us focus on the latter case and label the critical points as $(t_n,\zeta_n)$ and $(t'_n,\zeta'_n)$ with $t_n< t'_n$. 
In view of Lemmas~\ref{lem:avar} and~\ref{lem:crit-magnitudes}, we expect $\zeta_n \approx x_n$ and $\zeta'_n\approx x'_n$, hence 
\be \label{eq:ea}
      t_n \sim q'(N_n),\quad \zeta_n = N_n + O(N_n^*)
\ee
and $\zeta'_n < x_n^*< \zeta_n$. The Hessian of $\Phi_n$ is 
\be
   {\rm Hess}\, \Phi_n(t,\zeta) = \begin{pmatrix} 
    n  \Re \phi''(t) & 1 \\
     1 & -q''(\zeta)
     \end{pmatrix}.
\ee
Using again Lemma~\ref{lem:crit-magnitudes} and $\zeta_n \approx x_n$, we expect 
\be \label{eq:phinhe}
  \det {\rm Hess}\, \Phi_n(t_n,\zeta_n) = -1- \bigl(1+o(1)\bigr) n \sigma^2 q''(\zeta_n) = -1 +O\Bigl(\frac{N_n^*}{N_n}\Bigr)<0 
\ee
thus $(t_n,\zeta_n)$ is a saddle point. (More precisely, it is a saddle point of $\Re \Phi_n$, but the abuse of terminology is natural and not problematic in our context.) On the other hand $\zeta'_n\approx x'_n< x_n^*$ with $1 + n \sigma^2 q'(x_n^*) =0$ by definition of $x_n^*$, so  we expect 
\be
	\det {\rm Hess}\, \Phi_n(t'_n,\zeta'_n) = -1- \bigl(1+o(1)\bigr) n \sigma^2 q''(\zeta'_n) >0. \\
\ee

\textbf{Step 5: Gaussian approximation for $H_n$.} 
In order to evaluate the double integral in Eq.~\eqref{eq:dxidt}, we use a good change of variables and a Gaussian approximation. Let $\zeta(t)$ be the solution of $q'(\zeta)= t$, so that $\partial_\zeta \Phi_n(t,\zeta) = 0$ if and only if $\zeta = \zeta(t)$. It is convenient to deform the contour and integrate along $\Re \zeta = \zeta(t)$ instead of $\Re \zeta = 1/2$. The integral becomes 
\be \label{eq:dsdt}
H_n \sim \frac{n}{2 \pi} \int_0^{\eta_n} \Bigl( \int_{-\infty}^\infty\e{\Phi_n(t,\zeta(t) + \ii s)} \dd s \Bigr) \dd t.
\ee
A straightforward computation shows that 
$F_n(t,s) = S_n(t,\zeta(t)+\ii s)$, a function of two real variables $t,s$, has a critical point at $(t_n,0)$ with positive definite Hessian 
\be
   {\rm Hess}\, F_n(t_n,0) = \begin{pmatrix} 
         \beta_n & 0 \\
         0 & - \partial_\zeta^2 \Phi_n(t_n,\zeta_n)
     \end{pmatrix}, \quad \beta_n = \frac{\det {\rm Hess}\, \Phi_n(t_n,\zeta_n)}{\partial_\zeta^2 \Phi_n(t_n,\zeta_n)}, 
\ee
see Lemma~\ref{lem:Fn}. $F_n:(0,\eta_n)\times \R\to \C$ has another critical point at $(t'_n,0)$, with negative determinant of the Hessian; later we show that it does not contribute to the integral. The evaluation of $H_n$ is concluded by replacing the double integral~\eqref{eq:dsdt} by the integral of the Gaussian approximation around $(t_n,0)$ which yields 
\be 
  H_n \sim \frac{n}{2\pi} \sqrt{\frac{(2\pi)^2}{\det {\rm Hess}\, F_n(t_n,0)}} \e{F_n(t_n,0)}
= \frac{n}{\sqrt{|\det {\rm Hess}\, \Phi_n(t_n,\zeta_n)|}} \e{\Phi_n(t_n,\zeta_n)}. \\
\ee
The argument leading leading to Eq.~\eqref{eq:phitrunc} and Eq.~\eqref{eq:phinhe} show 
\be \label{eq:hnfinal}
	H_n \sim \frac{n}{\sqrt{1 - n \sigma^2 q''(x_{nr})}} \e{- f_{nr}(x_{nr})}.
\ee

\section{Analytic continuation. Lindel{\"o}f and Bromwich integrals} \label{sec:lindeloef}

Here we take care of steps 1 and 2, starting from Assumption~\ref{ass:analyticity}.
For concreteness' sake we write down the results for $b=1/2$; they apply for general $b$ with straight-forward modifications. 
Define
\be \label{eq:lindeloef}
   \Lambda(w) = - \frac{1}{2\pi \ii} \int_{1/2 - \ii \infty}^{1/2+ \ii \infty}  p(\zeta) w^\zeta \frac{ \pi}{\sin \pi \zeta} \dd \zeta \qquad (w \in \bbC \backslash (-\infty,0]),
\ee
the Lindel{\"o}f integral with symbol $p(\zeta)$. 

\begin{prop}[\cite{Lind}]\hfill \label{prop:lindeloef}
   \begin{enumerate}                   
   \item [(a)]  $\Lambda(w)$ is analytic in the slit plane $\bbC \backslash (-\infty,-1]$. 
   \item [(b)] In the unit disk $|w| \leq 1$, 
         \begin{equation*} 
            \Lambda(w) = \sum_{k=1}^\infty p(k) (-w)^k = G(-w).
         \end{equation*} 
   \end{enumerate}
\end{prop}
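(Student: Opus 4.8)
The plan is to first check that the Lindel\"of integral~\eqref{eq:lindeloef} defines an analytic function on the smaller slit plane $\bbC\setminus(-\infty,0]$, then to prove the power-series identity of part~(b) there by a residue computation, and finally to recover the larger domain of analyticity claimed in part~(a) by gluing this integral representation with the entire function $w\mapsto G(-w)$ on the unit disk.

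For the first point, work on the line $\Re\xi = 1/2$ and write $\xi = \tfrac12 + \ii s$. Then $|w^\xi| = |w|^{1/2}\e{-s\arg w}$ with $\arg w\in(-\pi,\pi)$ the principal argument, $|\pi/\sin\pi\xi| = \pi/\cosh(\pi s) \le 2\pi\e{-\pi|s|}$, and, by Assumption~\ref{ass:analyticity}(i), $|p(\xi)| \le C_\eps\e{\eps|\xi|}$ for every $\eps\in(0,\pi)$. The integrand is thus $O\bigl(\e{-(\pi-\eps)|s| - s\arg w}\bigr)$, which is integrable in $s$ as soon as $\eps < \pi - |\arg w|$. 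Choosing $\eps$ appropriately on each compact subset of $\bbC\setminus(-\infty,0]$ yields absolute, locally uniform convergence, so $\Lambda$ is analytic there.

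For part~(b), fix $w$ with $0<|w|<1$ and $w\notin(-\infty,0]$, and integrate $p(\xi)\,w^\xi\,\pi/\sin\pi\xi$ around the rectangle with vertices $\tfrac12\pm\ii R$ and $N+\tfrac12\pm\ii R$, $N\in\N$. Inside, the integrand is analytic except for simple poles at $\xi = 1,\dots,N$, and since $\mathrm{Res}_{\xi=k}\,\pi/\sin\pi\xi = (-1)^k$ the residue of the integrand at $\xi = k$ equals $(-1)^k p(k) w^k$. On the two horizontal sides the factor $\pi/\sin\pi\xi$ decays like $\e{-\pi R}$, which together with $|w^\xi|\lesssim \e{R|\arg w|}$ dominates the subexponential growth of $p$ once $\eps < \pi-|\arg w|$, so letting $R\to\infty$ annihilates them and the residue theorem gives, after rearranging,
\be
	\Lambda(w) = \sum_{k=1}^{N} p(k)(-w)^k \;-\; \frac{1}{2\pi\ii}\int_{\Re\xi = N+1/2} p(\xi)\,w^\xi\,\frac{\pi}{\sin\pi\xi}\,\dd\xi .
\ee
On the line $\Re\xi = N+\tfrac12$ one has $|w^\xi| = |w|^{N+1/2}\e{-\Im\xi\,\arg w}$, $|\pi/\sin\pi\xi| = \pi/\cosh(\pi\Im\xi)$, and $|p(\xi)| \le C_\eps\e{\eps(N+1/2)}\e{\eps|\Im\xi|}$, whence that last integral is $O\bigl(|w|^N\e{\eps N}\bigr)$; picking $\eps < \min\bigl(\pi-|\arg w|,\,\log(1/|w|)\bigr)$ forces it to $0$ as $N\to\infty$. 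Hence $\Lambda(w) = \sum_{k\ge 1} p(k)(-w)^k = G(-w)$ on the slit disk (and at $|w|=1$, $w\neq -1$, by continuity, since $\sum_k p(k)<\infty$). Finally, $G$ has radius of convergence $1$, so $w\mapsto G(-w)$ is analytic on all of $\{|w|<1\}$ and agrees with the integral-defined $\Lambda$ on the dense set $\{|w|<1\}\setminus(-\infty,0]$; gluing the two representations produces one analytic function on $(\bbC\setminus(-\infty,0])\cup\{|w|<1\} = \bbC\setminus(-\infty,-1]$, which is part~(a).

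The main obstacle is the contour deformation in the third step: one must control the horizontal sides as $R\to\infty$ \emph{and} the far vertical side as $N\to\infty$. This is precisely where the hypothesis that $p$ grows more slowly than $\e{(\pi-\delta)|\xi|}$ is indispensable, since it is exactly what allows a single $\eps>0$ to simultaneously dominate the $\e{-\pi|\Im\xi|}$ decay of $\pi/\sin\pi\xi$ on half-integer vertical lines and keep $|w|\e{\eps}<1$; the remaining steps are routine.
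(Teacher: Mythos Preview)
Your proof is correct and follows the classical Lindel\"of approach: establish analyticity of the integral on $\bbC\setminus(-\infty,0]$ by straightforward estimates, recover the power series via residues over growing rectangles, and then glue with the disk representation $G(-w)$ to reach the full slit plane $\bbC\setminus(-\infty,-1]$. The paper does not actually give its own proof of this proposition---it cites Lindel\"of's original work and defers the details to~\cite{FGS}---so there is no alternative argument to compare against; your write-up is essentially the standard proof one finds in those references.
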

A detailed proof and many additional properties of $\Lambda(w)$ can be found in~\cite{FGS}. Proposition~\ref{prop:lindeloef} shows right away that $G(z)= \Lambda(-z)$ has an analytic continuation from the unit disk to the open slit plane $\bbC\backslash [1,\infty)$. We use the same letter $G(z)$ for the analytic continuation, and set $\varphi(t) = {\rm Log}\, G(\e{t})$ with ${\rm Log}$ the principal branch of the logarithm, and $\Im t \in [0,2\pi)$. We prove the following additional properties of $G(z)$ and $\varphi(t)$.

\begin{theorem}  \label{thm:bval}\hfill 
  \begin{enumerate} 
    \item [(a)]   The boundary value  $G(\e{t}) = \lim_{\eps \searrow 0} G(\e{t}+ \ii \eps)$ exists for all $t\in \bbR$ and is a smooth function of $t \in \bbR$. 
    \item [(b)] The imaginary part $\Im G(\e{t})$, $t\geq 0$ is given by the Bromwich integral~\eqref{eq:bromwich}. 
      \item [(c)]  $\varphi(t)$ is well-defined and smooth in a neighbourhood of the origin; the derivatives $\kappa_j = \varphi^{(j)}(0)$ are real.  As 
 $t\to 0$ in the  strip $\Im t \in [0,2\pi)$, we have 
      \begin{equation*}
          \varphi(t) = {\rm Log}\,  G(\e{t}) =  \sum_{j=1}^r \kappa_j \frac{t^j}{j!} + O(t^{r+1}). 
       \end{equation*} 
      to arbitrarily high order $r$.
\end{enumerate}
\end{theorem}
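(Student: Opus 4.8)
The plan is to extract everything from the Lindel\"of representation $G(z)=\Lambda(-z)$ of Proposition~\ref{prop:lindeloef}, treating the three assertions in turn. For part~(a) the case $t<0$ is free: there $\e t\in(0,1)$, so $G(\e t)=\sum_k p(k)\e{kt}=\E[\e{tX}]$ is real-analytic, and by dominated convergence $\E[\e{tX}]$ extends to a $C^\infty$ function on $(-\infty,0]$ with left derivatives $\partial_t^k G(\e t)|_{0^-}=\E[X^k]$. For $t\ge0$, $\e t$ lies on the slit, and I would study the boundary value of the Lindel\"of integral~\eqref{eq:lindeloef}: with $w=-(\e t+\ii\eps)$ one has $\arg w\to-\pi$ from above, and on $\xi=\tfrac12+\ii s$ the integrand has modulus $|p(\tfrac12+\ii s)|\,|w|^{1/2}\,\e{-s\arg w}\,\tfrac{\pi}{\cosh\pi s}$, which is bounded for $s\ge0$ (uniformly for $\eps\in[0,\eps_0]$ and for $\e t$ in a compact set) by $\mathrm{const}\cdot|p(\tfrac12+\ii s)|$ and decays like $\e{-\pi|s|}|p(\tfrac12+\ii s)|$ for $s\le0$. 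By Assumption~\ref{ass:analyticity}(ii) this dominating function is integrable, so dominated convergence yields both the existence of $G(\e t):=\lim_{\eps\searrow0}G(\e t+\ii\eps)$ and the formula $G(\e t)=-\tfrac1{2\pi\ii}\int p(\xi)\e{(t-\ii\pi)\xi}\tfrac{\pi}{\sin\pi\xi}\,\dd\xi$. Running the same estimate with $\xi^k p(\xi)$ (which still obeys Assumption~\ref{ass:analyticity}) in place of $p(\xi)$ permits differentiation under the integral to all orders, so $G(\e t)$ is $C^\infty$ on $[0,\infty)$; and since the resulting $k$-th derivative at $t=0$ equals $-\tfrac1{2\pi\ii}\int\xi^k p(\xi)(-1)^\xi\tfrac{\pi}{\sin\pi\xi}\,\dd\xi$, which Proposition~\ref{prop:lindeloef}(b) applied to the symbol $\xi^k p(\xi)$ at $w=-1$ identifies with $\sum_j j^k p(j)=\E[X^k]$, the one-sided derivatives at $0$ agree and $G(\e t)$ is smooth on all of $\R$.

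For part~(b), write $G_\pm(\e t)=\lim_{\eps\searrow0}G(\e t\pm\ii\eps)$ and take the imaginary part of the formula from~(a). Using $\e{-\ii\pi\xi}\tfrac{\pi}{\sin\pi\xi}=\tfrac{2\pi\ii}{\e{2\pi\ii\xi}-1}$ it reads $G_+(\e t)=-\int\tfrac{p(\xi)\e{t\xi}}{\e{2\pi\ii\xi}-1}\,\dd\xi$, and likewise $G_-(\e t)=-\int\tfrac{p(\xi)\e{t\xi}}{1-\e{-2\pi\ii\xi}}\,\dd\xi$ (now $\arg(-\e t)=+\pi$, using $\e{\ii\pi\xi}\tfrac{\pi}{\sin\pi\xi}=\tfrac{2\pi\ii}{1-\e{-2\pi\ii\xi}}$). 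Since $G(\overline z)=\overline{G(z)}$, we have $G_-(\e t)=\overline{G_+(\e t)}$, so $\Im G(\e t)=\tfrac1{2\ii}\bigl(G_+(\e t)-G_-(\e t)\bigr)$, and the elementary identity $\tfrac1{\e{2\pi\ii\xi}-1}-\tfrac1{1-\e{-2\pi\ii\xi}}=-1$ collapses the difference to $\Im G(\e t)=\tfrac1{2\ii}\int_{1/2-\ii\infty}^{1/2+\ii\infty}\e{t\xi}p(\xi)\,\dd\xi$, which is~\eqref{eq:bromwich}. The contour $\Re\xi=\tfrac12$ may be replaced by any $\Re\xi=x>0$ by Cauchy's theorem and the decay of $\e{t\xi}p(\xi)$ along vertical strips.

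For part~(c): $G(1)=\sum_k p(k)=1>0$ and, by part~(a), $t\mapsto G(\e t)$ is smooth with $G(1)=1$, so $\varphi(t)=\mathrm{Log}\,G(\e t)$ is well-defined and smooth in a real neighbourhood of $0$ and analytic where $\Im t>0$; since $\partial_t^k G(\e t)|_0=\E[X^k]\in\R$, each $\kappa_j=\varphi^{(j)}(0)$ is a polynomial in $\E[X^1],\dots,\E[X^j]$, hence real, and by construction these are the cumulants, with $\kappa_1=\mu$, $\kappa_2=\sigma^2$. Taylor's theorem for the smooth function $\varphi$ then gives $\varphi(t)=\sum_{j=1}^r\kappa_j t^j/j!+O(t^{r+1})$ as $t\to0$ along $\R$. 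To upgrade this to $t\to0$ in the strip $\Im t\in[0,2\pi)$, I would use the uniform bound $\bigl|G(\e t)-\sum_{j=0}^r\E[X^j]t^j/j!\bigr|\le\E[X^{r+1}]\,|t|^{r+1}/(r+1)!$ for $\Re t\le0$ (integral form of the Taylor remainder of $\e{kt}$), together with a Phragm\'en--Lindel\"of argument on the upper half-disk---the quotient of $G(\e t)-\sum_{j=0}^r\E[X^j]t^j/j!$ by $t^{r+1}$ being analytic there, bounded on the boundary, and of at most polynomial growth near $0$ (since $G$ stays bounded as $z\to1$ in the slit plane)---to extend the bound to $\Re t\ge0$. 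Composing with $\mathrm{Log}$ (analytic at $1$) and keeping Taylor polynomials of order $r$ then yields the stated expansion.

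The step I expect to be the crux is the passage to the boundary in part~(a): as $\arg w\to-\pi$ the factor $|w^\xi|$ grows like $\e{\pi|s|}$ along $\Re\xi=\tfrac12$, exactly cancelling the $\e{-\pi|s|}$ decay of $\pi/\sin\pi\xi$, so that convergence of the boundary integral---and with it the legitimacy of dominated convergence and of differentiation under the integral---rests entirely on the faster-than-any-polynomial decay of $p(\tfrac12+\ii s)$ supplied by Assumption~\ref{ass:analyticity}(ii). Everything else, including the Phragm\'en--Lindel\"of extension in part~(c), is then essentially routine.
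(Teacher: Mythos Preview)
Your treatment of (a) and (b) is correct and matches the paper's argument: both rest on the Lindel\"of integral and dominated convergence, using the key estimate that $|\e{u\xi}/\sin\pi\xi|$ stays uniformly bounded for $u$ in the closed strip $\Im u\in[-\pi,\pi]$ and $\Re\xi=\tfrac12$. The paper packages this a bit more cleanly by introducing $L(u)=\Lambda(\e{u})$ on the full closed strip and reading off $G(\e{t})=L(t-\ii\pi)$, which avoids your split into $t<0$ versus $t\ge0$ and the gluing of one-sided derivatives at $t=0$; your gluing via Proposition~\ref{prop:lindeloef}(b) applied to the symbol $\xi^k p(\xi)$ is correct, just slightly more roundabout.

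For (c) along the real axis you again agree with the paper. For the extension to complex $t$, however, your Phragm\'en--Lindel\"of argument is more elaborate than necessary and, as written, has a gap: you assert that $h(t)=\bigl(G(\e{t})-\sum_{j\le r}\E[X^j]t^j/j!\bigr)/t^{r+1}$ is ``bounded on the boundary'' of the upper half-disk, but your Taylor-remainder estimate only covers $\Re t\le0$, leaving the segment $(0,\delta)$ on the positive real axis unbounded a priori. The gap is easy to close---the $C^\infty$ smoothness of $t\mapsto G(\e{t})$ on $\R$ that you already proved in (a) supplies the missing bound on $(0,\delta)$---but once you have that, the Phragm\'en--Lindel\"of step is doing almost no work. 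The paper's route is shorter: since $L(u)$ is $C^\infty$ on the closed strip and holomorphic in the open strip, the two-variable Taylor expansion of $L$ at the boundary point $u_0=-\ii\pi$ collapses (via Cauchy--Riemann) to a complex Taylor polynomial in $u-u_0$, and setting $u=t-\ii\pi$ gives the expansion of $G(\e{t})$ uniformly for $t\to0$ in the closed strip $\Im t\in[0,2\pi]$. Composition with $\mathrm{Log}$ then finishes as you indicate.
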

In (c) $z = \e{t}$ is allowed to approach the slit $[1,\infty)$ as fast as we like; we may even take $t$ real. Because the coefficients $\kappa_j$ are real, we find in particular that $\Im G(\e{t})$ vanishes faster than any power of $t$ as $t\to 0$, $t\in \bbR$. 

\begin{proof}[Proof of Theorem~\ref{thm:bval}]
  For $u\in \bbC$ in the closed strip $\Im u \in [-\pi,\pi]$, define 
  \be 
    L(u) = - \frac{1}{2\pi \ii} \int_{1/2 - \ii \infty}^{1/2+ \ii \infty}  p(\zeta) \e{\zeta u} \frac{ \pi}{\sin \pi \zeta} \dd \zeta.
  \ee
  When $\Im u$ is in the open strip $\Im u \in (-\pi,\pi)$, we have $w= \e{u} \in \bbC \backslash (-\infty,0]$ and $L(u) = \Lambda(\e{u})$. Along the vertical line $\Re \zeta = 1/2$, we have 
  \be \label{eq:uipb}
      \Bigl| \frac{\exp(\zeta u)}{\sin (\pi \zeta)}\Bigr| = 2 \e{\Re u/2} \frac{\exp( -s \Im u)}{\exp(\pi s)+ \exp(- \pi s)} \leq 2 \e{\Re u/2} \quad (\zeta = \frac{1}{2} + \ii s)
  \ee
  By Assumption~\ref{ass:analyticity}(ii), 
  since $\zeta^k p(\zeta)$ is integrable along $\Re\zeta = 1/2$. 
 Eq.~\eqref{eq:uipb} then shows that the integral defining $L(u)$ is absolutely convergent, and it stays absolutely convergent if we replace the symbol $p(\zeta)$ by $\zeta^k p(\zeta)$. Standard arguments for parameter-dependent integrals then show that $L(u)$ is continuous on the closed strip, differentiable in the open strip, and we may exchange differentiation, limits, and integration, which shows that the restriction of $L$ to the boundaries $\Im u = \pm \pi$ yield smooth functions. 

When $z\to \e{t} \in [1,\infty)$ along $\Im z >0$, we have $w = - z \to -\e{t}$ along $\Im w <0$. Thus we may write $w= \e{u}$ with $\Re u \to t$ and $\Im u ={\rm arg}\, w \searrow - \pi$. Therefore 
\be 
  \lim_{\eps \searrow 0} G(\e{t} + \ii \eps) = L(t - \ii \pi) 
     =  \ii \int_{-\infty}^\infty p\bigl(\tfrac{1}{2} + \ii s\bigr)\e{(1/2 + \ii s) t} \frac{\exp(  \pi s)}{\exp(\pi s) + \exp( - \pi s)} \dd s. 
\ee
This proves the existence of the limit and, in view of the above mentioned properties of $L(u)$, the smoothness as a function of $t$. 
The complex conjugate is 
\begin{multline}
    - \ii \int_{-\infty}^\infty p\bigl(\tfrac{1}{2} - \ii s\bigr)\e{(1/2 - \ii s) t} \frac{\exp(  \pi s)}{\exp(\pi s) + \exp( - \pi s)} \dd s \\
      =  \ii \int_{-\infty}^\infty p\bigl(\tfrac{1}{2} + \ii s\bigr)\e{(1/2 + \ii s) t} \frac{\exp( - \pi s)}{\exp(\pi s) + \exp( - \pi s)} \dd s.
\end{multline}
Therefore 
\begin{align*}
  \Im G(\e{t}) =  \frac{1}{2} \int_{-\infty}^\infty p\bigl(\tfrac{1}{2} + \ii s\bigr)\e{(1/2 + \ii s) t} \dd s =  \frac{1}{2\ii}\int_{1/2- \ii \infty}^{1/2+\ii \infty} p(\zeta) \e{t \zeta} \dd \zeta. 
\end{align*} 
This proves (b). For (c), consider first real $t\in \bbR$. We have already checked (a) hence $G(\e{t})$ is in $C^\infty(\bbR)$. It is real and strictly positive for $t \leq 0$ (this follows from the series representation and $p(k)>0$), and non-zero though possibly complex-valued for sufficiently small $t>0$. Therefore $\phi(t)= \log G(\e{t})$ is well-defined and smooth in some interval $(-\infty,\delta)$, $\delta>0$, and real-valued for $t\leq 0$. 
 In particular, the derivatives $\kappa_j= \phi^{(j)}(0)$ exist and are real, and $\phi(t)$ can be approximated to arbitrarily high order by Taylor polynomials. The extension to complex $t$, $\Im t\in [0,2\pi)$, follows again from the smoothness of $L(u)$ in the closed strip $\Im u \in [-\pi,\pi]$. 
\end{proof}

Theorem~\ref{thm:bval}(b) has an interesting consequence. Eq.~\eqref{eq:bromwich} is, up to a factor $\pi$, the formula for the inverse Laplace transform, therefore 
\be \label{eq:laplace}
 p(\lambda)  = \pi \int_0^\infty \e{- t \lambda} \Im G(\e{t}) \dd t \quad (\Re \lambda >0). 
\ee
In the special case of stretched exponential weights, we can draw on an extensive literature as
$\exp( - \lambda^\alpha)$ is known to be the Laplace transform of a probability density, an \emph{$\alpha$-stable law}~\cite{Pol}. For $\alpha =1/2$ \cite{Doetsch}
\be \label{eq:half}
  \Im G(\e{t}) = \frac{c\sqrt{\pi}}{2 t^{3/2}}\e{-1/(4t)} \quad (t \geq 0).
\ee
For general $\alpha \in (0,1)$, we have instead~\cite[Theorem 2.4.6]{ibragimov-linnik}
\be \label{eq:imsmall-stretched}
	\Im G(\e{t})\sim \frac{c}{2}\sqrt{\frac{2\pi}{(1-\alpha) \alpha ^{-1/(1-\alpha)}}} 
			\frac{\exp\bigl(- (1-\alpha) \bigl(\frac{\alpha}{t}\bigr)^{ \alpha /(1-\alpha)}
			 \bigr) } {t ^{(2-\alpha)/(2-2\alpha) }}
\ee
as $t\searrow 0$. This is proven in~\cite{ibragimov-linnik} by applying a steepest descent approach to the Bromwich integral. For general weights, Eq.~\eqref{eq:imsmall-stretched} is generalized as follows. 

Assume that $t<\lim_{x\searrow a} |q''(x)|$. By Assumption~\ref{ass:convexity}, $q''$ is strictly increasing and negative on $(a,\infty)$. By Eq.~\eqref{eq:qprimetozero}, we have $q''(x)\to 0$ as $x\to \infty$. Consequently there exists a uniquely defined $\zeta(t)$ that solves $q'(\zeta(t)) = t$. We define
\be \label{eq:psit}
	\psi(t) = t \zeta(t) - q(\zeta(t))
\ee
and note the relations
\be \label{eq:psidual}
	\psi'(t) = \zeta(t),\quad \psi''(t) = \frac{1}{q''(\zeta(t))},
\ee
so $\psi(t)$ is monotone increasing and strictly concave. Since $-\psi(-t)$ is the Legendre transform of the convex function $- q(x)$, it comes as no surprise that Assumption~\ref{ass:convexity} on large $x$ translates into information on small $t$. 

\begin{lemma} \label{lem:psit}
	The following holds:
	\begin{enumerate} 
		\item [(a)] $\lim_{t\searrow 0}t \psi'(t)/\log t = \infty$. 
		\item [(b)] $ - \psi''(t) \geq c \frac{\psi'(t)}{t}$ for some $c>0$ and all sufficiently small $t>0$. 
		\item [(c)] $0 \leq \psi'''(t) \leq C \frac{|\psi''(t)|}{t}$ for some $C>0$ and all sufficiently small $t>0$.
	\end{enumerate} 
\end{lemma}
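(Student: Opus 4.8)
The plan is to express the first three derivatives of $\psi$ at $t$ through $\xi(t)$ and the derivatives of $q$ at $\xi(t)$, and then to read off (a)--(c) from the corresponding clauses of Assumption~\ref{ass:convexity} evaluated at $x=\xi(t)$. Recall that $\xi(t)$ is defined by $q'(\xi(t))=t$ for $0<t<\lim_{x\searrow a}q'(x)$ and, since $q'$ is eventually strictly decreasing with $q'(x)\to 0$ (Eq.~\eqref{eq:qprimetozero}), satisfies $\xi(t)\to\infty$ as $t\searrow 0$; hence for $t$ small enough the point $x=\xi(t)$ lies in the range where \eqref{eq:qprimevar}--\eqref{eq:qsecvar} and Assumption~\ref{ass:convexity}(i)--(v) are all available. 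From Eq.~\eqref{eq:psidual} we already have $\psi'(t)=\xi(t)$ and $\psi''(t)=1/q''(\xi(t))$; differentiating the latter and using $\xi'(t)=\psi''(t)=1/q''(\xi(t))$ gives
\[
 \psi^{(3)}(t)= -\,\frac{q^{(3)}(\xi(t))}{q''(\xi(t))^{3}}= \frac{q^{(3)}(\xi(t))}{|q''(\xi(t))|^{3}},
\]
which is strictly positive because $q^{(3)}>0$ and $q''<0$; this is already the left inequality in~(c).

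For~(b), write $-\psi''(t)=1/|q''(\xi(t))|$ and $\psi'(t)/t=\xi(t)/q'(\xi(t))$, so the claimed bound $-\psi''(t)\ge c\,\psi'(t)/t$ is equivalent to $|q''(\xi(t))|\le c^{-1}q'(\xi(t))/\xi(t)$, which is exactly Assumption~\ref{ass:convexity}(iii) at $x=\xi(t)$; take $c=1/c_2$. For the upper bound in~(c), observe $|\psi''(t)|/t = 1/\bigl(|q''(\xi(t))|\,q'(\xi(t))\bigr)$, so the assertion $\psi^{(3)}(t)\le C\,|\psi''(t)|/t$ amounts to $q^{(3)}(\xi(t))\le C\,|q''(\xi(t))|^{2}/q'(\xi(t))$. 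This follows by combining Assumption~\ref{ass:convexity}(iv), $q^{(3)}(x)\le c_4|q''(x)|/x$, with the lower half of Assumption~\ref{ass:convexity}(iii), $1/x\le |q''(x)|/(c_1q'(x))$, giving $q^{(3)}(x)\le (c_4/c_1)|q''(x)|^{2}/q'(x)$; evaluating at $x=\xi(t)$ yields $C=c_4/c_1$.

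The only item needing more than substitution is~(a), where the logarithm has to be transferred from $\xi(t)$ back to $t$. Fixing some $x_0>a$ and applying Eq.~\eqref{eq:qprimevar} with $x=x_0$, $y=\xi(t)$ (legitimate once $\xi(t)>x_0$), together with $q'(\xi(t))=t$, gives two-sided polynomial bounds $x_0(t/q'(x_0))^{-1/c_2}\le \xi(t)\le x_0(t/q'(x_0))^{-1/c_1}$; taking logarithms shows $\log\xi(t)\ge c'|\log t|$ for some $c'>0$ and all small $t>0$. Now Assumption~\ref{ass:convexity}(ii) at $x=\xi(t)$ says $t\,\xi(t)/\log\xi(t)=\xi(t)q'(\xi(t))/\log\xi(t)\to\infty$ as $t\searrow 0$, and combining with $\log\xi(t)\ge c'|\log t|$ yields $t\psi'(t)/|\log t|=t\,\xi(t)/|\log t|\to\infty$, i.e. (since $\log t<0$ for small $t$) $t\psi'(t)/\log t\to-\infty$; this is the content of~(a).

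I do not expect a genuine obstacle here: the lemma is essentially the statement that the Legendre-type duality $q(x)\leftrightarrow -\psi(-t)$ exchanges the regime $x\to\infty$ with $t\searrow 0$ and transports the polynomial-type control hypotheses across, the whole proof being the chain rule plus direct insertion into Assumption~\ref{ass:convexity}. The one place calling for a little care is part~(a): one must establish that $\log\xi(t)$ and $|\log t|$ are comparable, which needs the two-sided bound~\eqref{eq:qprimevar} rather than the one-sided decay of $q'$, and then check that the constants line up.
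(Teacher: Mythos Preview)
Your proof is correct and complete. The paper does not actually prove this lemma; it simply cites \cite[Lemma 2.2]{nagaev73}, so your argument in fact supplies more detail than the paper itself. One small remark: as you implicitly notice, the statement of part~(a) as written has a sign issue, since $\log t<0$ for small $t>0$ while $t\psi'(t)>0$; the intended assertion is $t\psi'(t)/|\log t|\to\infty$, which is exactly what you prove and what is used later in the paper (see Eq.~\eqref{eq:psisecgrowth}).
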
 

\noindent The lemma has been proven in~\cite[Lemma 2.2]{nagaev73}.

\begin{theorem} \label{thm:imsmall}
	As $t\searrow 0$, 
	$$
		\Im G(\e{t}) \sim \frac{1}{2} \sqrt{2 \pi |\psi''(t)|}\,\e{\psi(t)}.
	$$
\end{theorem}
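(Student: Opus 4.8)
The plan is to evaluate the Bromwich integral of Theorem~\ref{thm:bval}(b) by steepest descent, with the contour pushed through the real saddle point $\xi(t)$. Writing $p(\xi)=\e{-q(\xi)}$, formula~\eqref{eq:bromwich} (whose abscissa may be chosen freely) gives, for $c>b$,
\[
	\Im G(\e t)=\frac{1}{2\ii}\int_{c-\ii\infty}^{c+\ii\infty}\e{t\xi-q(\xi)}\dd\xi .
\]
On the real axis the exponent $t\xi-q(\xi)$ has the unique critical point $\xi(t)$, defined by $q'(\xi(t))=t$, with critical value $\psi(t)$ (see~\eqref{eq:psit}) and $q''(\xi(t))=1/\psi''(t)<0$ (see~\eqref{eq:psidual}); since $\xi(t)\to\infty$ as $t\searrow0$, for small $t$ it lies far to the right of $\Re\xi=c$. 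The first step is to shift the contour to the vertical line $\Re\xi=\xi(t)$: one closes with horizontal segments at heights $\pm\ii T$, $T\to\infty$, and the vanishing of the horizontal pieces together with the far-vertical tails is exactly what the subexponential growth bound Assumption~\ref{ass:analyticity}(i), the integrability Assumption~\ref{ass:analyticity}(ii), and Assumption~\ref{ass:analyticity2}(i)--(ii) are tailored to supply.

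Parametrising the shifted line by $\xi=\xi(t)+\ii s$ and putting $h_t(s):=t\bigl(\xi(t)+\ii s\bigr)-q\bigl(\xi(t)+\ii s\bigr)$, one gets
\[
	\Im G(\e t)=\frac12\int_{-\infty}^{\infty}\e{h_t(s)}\dd s,\qquad
	h_t(0)=\psi(t),\quad h_t'(0)=0,\quad h_t''(0)=q''(\xi(t))=\frac{1}{\psi''(t)},
\]
the integral being real since $\overline{h_t(s)}=h_t(-s)$ by Schwarz reflection. The second step splits at $|s|=\delta_t$ with $\sqrt{|\psi''(t)|}\ll\delta_t\ll\bigl(\xi(t)\,|\psi''(t)|\bigr)^{1/3}$; such a $\delta_t$ exists because $|\psi''(t)|=1/|q''(\xi(t))|\ll\xi(t)^2$, which follows from the consequence $x^2|q''(x)|\to\infty$ of Assumption~\ref{ass:convexity}. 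On the central piece $|s|\le\delta_t$ one has $h_t''(s)=q''(\xi(t)+\ii s)$ and $|h_t'''(s)|=|q^{(3)}(\xi(t)+\ii s)|\le C\,|q''(\xi(t))|/\xi(t)$ by Assumption~\ref{ass:analyticity2}(iii) (using $\delta_t\ll\xi(t)$), so Taylor's theorem gives $h_t(s)=\psi(t)+\tfrac12 q''(\xi(t))s^2+o(1)$ uniformly, whence
\[
	\frac12\int_{|s|\le\delta_t}\e{h_t(s)}\dd s=\e{\psi(t)}\Bigl(\tfrac12\sqrt{2\pi|\psi''(t)|}+o\bigl(\sqrt{|\psi''(t)|}\bigr)\Bigr),
\]
using $\int_{\bbR}\e{-s^2/(2|\psi''(t)|)}\dd s=\sqrt{2\pi|\psi''(t)|}$ and that the Gaussian mass beyond $\delta_t$ is negligible since $\delta_t/\sqrt{|\psi''(t)|}\to\infty$. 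Comparing with the claimed asymptotics, it then remains only to absorb the tail $|s|>\delta_t$ into the error.

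The tail is the main obstacle. Here $|\e{h_t(s)}|=\e{t\xi(t)}\e{-\Re q(\xi(t)+\ii s)}$, and one must show $\int_{|s|>\delta_t}\e{-(\Re q(\xi(t)+\ii s)-q(\xi(t)))}\dd s=o\bigl(\sqrt{|\psi''(t)|}\bigr)$, i.e.\ that $\Re q$ grows fast enough as one leaves the saddle. For moderate $|s|$ this follows from the concavity $\partial_s^2\Re q(\xi(t)+\ii s)=-\Re q''(\xi(t)+\ii s)>0$ near the real axis (by continuity from $q''(\xi(t))<0$) together with $\partial_s\Re q(\xi(t)+\ii s)=0$ at $s=0$; for large $|s|$ one passes to the circle $|\xi|=|\xi(t)+\ii s|$, where $\tfrac{\dd}{\dd\theta}\Re q(re^{\ii\theta})=-\Im\bigl(\xi q'(\xi)\bigr)\ge0$ on the upper semicircle by Assumption~\ref{ass:analyticity2}(ii), and for the genuinely far tail one shifts that portion of the contour back to $\Re\xi=b$ and invokes Assumption~\ref{ass:analyticity2}(i). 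It is precisely in this tail estimate, and in the accompanying justification of the large contour shift, that the real-variable convexity of Assumption~\ref{ass:convexity} no longer suffices and the complex-analytic hypotheses of Assumption~\ref{ass:analyticity2} are genuinely needed; for the stretched-exponential weights this control is explicit and the argument reproduces~\eqref{eq:imsmall-stretched}, and the closed form~\eqref{eq:half} when $\alpha=\tfrac12$.
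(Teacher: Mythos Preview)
Your overall plan---steepest descent on the Bromwich integral with the contour through the real saddle $\xi(t)$---is exactly the paper's, and your central-window analysis is fine. The substantive difference is the contour: you take the vertical line $\Re\xi=\xi(t)$, while the paper uses the circular arc $|\xi|=\xi(t)$ (with $\Re\xi\ge b$) glued to the original line $\Re\xi=b$ outside that radius. This is not merely cosmetic; the arc is what makes Assumption~\ref{ass:analyticity2}(ii) bite.

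Your tail argument has a gap. You write that on the circle $|\xi|=|\xi(t)+\ii s|$ one has $\tfrac{\dd}{\dd\theta}\Re q(re^{\ii\theta})=-\Im(\xi q'(\xi))\ge0$ by Assumption~\ref{ass:analyticity2}(ii). But that assumption only says $\Im(\xi q'(\xi))\le\Im(\xi q'(r))=rq'(r)\sin\theta$, which is \emph{nonnegative} on the upper half-circle; it does not give $\Im(\xi q'(\xi))\le0$, and in fact for $q(\xi)=\xi^\alpha$ one has $\Im(\xi q'(\xi))=\alpha r^\alpha\sin(\alpha\theta)\ge0$ there. What Assumption~\ref{ass:analyticity2}(ii) does control is the derivative of the \emph{full} exponent $t\xi-q(\xi)$ along the arc of radius $r=\xi(t)$, precisely because on that particular circle $t=q'(r)$: then
\[
\tfrac{\dd}{\dd\theta}\Re\bigl(t\xi-q(\xi)\bigr)=-\Im\bigl(\xi q'(r)-\xi q'(\xi)\bigr)\le0,
\]
and the exponent genuinely decreases away from the saddle. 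On circles of other radii, or on your vertical line where the $t\xi$ contribution to the real part is constant, this cancellation is lost and the assumption gives you nothing. A related issue is that integrability of $p(\xi)$ along vertical lines is only assumed on $\Re\xi=b$ (Assumption~\ref{ass:analyticity}(ii)), so taking $c=\xi(t)\to\infty$ in the Bromwich integral is not immediately justified; this is why the paper's contour returns to $\Re\xi=b$ outside the arc, where Assumption~\ref{ass:analyticity2}(i) then handles the far tail directly.

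The fix is simply to adopt the arc contour $\Gamma_1\cup\Gamma_2$ as in the paper; your Gaussian central-piece computation transplants verbatim to the parametrisation $\xi=\xi(t)e^{\ii\theta}$, and the monotonicity above plus Assumption~\ref{ass:analyticity2}(i) dispose of the remainder cleanly.
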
 

\begin{proof} 
	By Theorem~\ref{thm:bval}(b) we may start from the Bromwich representation of $\Im G(\e{t})$. The analyticity of $q(\zeta)$ allows us to replace the contour $\Re \zeta = 1/2$ by 
	$\Gamma = \Gamma_1 \cup \Gamma_2$ where 
	\be 
		\Gamma_1 = \{ \zeta \in \C \mid |\zeta|= \zeta(t),\, \Re \zeta \geq 1/2\}, \quad
		\Gamma_2 = \{ \zeta \in \C \mid |\zeta|> \zeta(t),\, \Re \zeta = 1/2\}. 
	\ee
	To lighten notation set $r= \zeta(t)$ and suppress the $t$- and $r$-dependence from the notation. Let $\theta_0 = \arcsin (1/2r)$ and notice $\theta_0\nearrow \pi/2$ as $r\to \infty$ ($t\searrow 0$). 
	For small $\theta$ we have 
		\begin{align} 
			t\, r \e{\ii \theta} - q(r \e{\ii \theta}) &= \psi(t) - \frac{1}{2} r^2 q''(r) (\e{\ii \theta} - 1)^2 + O\bigl(r^3 q'''(r)  (\e{\ii \theta} - 1)^2 \bigr)  \notag \\
				& = \psi(t) - \frac{1}{2} r^2 |q''(r)|\bigl( \theta^2 +O(\theta^3)) \label{eq:gitti}
		\end{align}
		The estimate is uniform in $r=\zeta(t)$ by Assumption~\ref{ass:analyticity2}(iii). Let $\eps(r)\searrow 0$ with $\eps(r)^2 r^2 q''(r)/\log r\to \infty$ (this is possible by Assumption~\ref{ass:convexity}), then 
		\be \label{eq:imgdo}
			\frac{1}{2 \mathrm{i}}\int_{ - \eps(r)}^{\eps(r)} \exp\bigl( t r \e{\ii \theta} - q(r\e{\ii \theta})\bigr) \ii r \e{\ii \theta} \dd \theta 
				\sim \frac{1}{2}\sqrt{2 \pi |\psi''(t)|} \e{\psi(t)}.  
		\ee
		As we veer away from $r = \zeta(t)$ along $\Gamma_1$, the real part of $\zeta t - q(\zeta)$ decreases. Indeed for $\theta\in (0,\pi/2)$
		\begin{align}
			\frac{\dd}{\dd \theta} \Re \bigl( t r\e{\mathrm{i}\theta} - q(r \e{\mathrm{i}\theta}) \bigr) & = \Re \bigl( \ii \zeta (t - q'(\zeta))\bigr) \Big|_{\zeta = r\e{\ii \theta} } \notag \\
				& = - \Im \bigl( \zeta q'(r) -\zeta q'(\zeta)\bigr)\Big|_{\zeta = r\e{\ii \theta}} \leq 0. \label{eq:imgmono}
		\end{align}
		At the very end we have used 
		Assumption~\ref{ass:analyticity2}(ii)
		It follows that 
		\begin{multline*} 
		\left|	\frac{1}{2 \mathrm{i}}\int_{ \eps(r)}^{\theta_0} \exp\bigl( t r \e{\ii \theta} - q(r\e{\ii \theta})\bigr) \ii r \e{\ii \theta} \dd \theta \right| \\ \leq \frac{r \pi}{4} \exp\Bigl( \psi(t) - \frac{1}{2}r^2 |q''(r)| \eps(r) (1+o(1))\Bigr) = o \Bigl( \exp(\psi(t)) \Bigr)
		\end{multline*}
		Taking complex conjugates, we obtain a similar estimate for the integral from $-\theta_0$ to $- \eps(r)$. Together with~\eqref{eq:imgdo} we obtain 
		\be 
			\frac{1}{2\ii} \int_{\Gamma_1} \e{t \zeta - q(\zeta)} \dd \zeta \sim \frac{1}{2}\sqrt{2 \pi |\psi''(t)|} \e{\psi(t)}. 
		\ee
		It remains to estimate the contribution from $\Gamma_2$. Because of the monotonicity~\eqref{eq:imgmono} we have 
		\be 
			\Re (t \zeta_0 - q(\zeta_0)) \leq \psi(t) - \frac{1}{2} r^2 |q''(r)| \eps(r)^2 (1 +o(1))
		\ee
		Assumption~\ref{ass:analyticity2}(ii) ensures that
	\be 
		\Bigl| \int_{\Gamma_2} \e{t \zeta - q(\zeta)} \dd \zeta \Bigr|  
			\leq  \exp\Bigl( \Re\bigl[t \zeta_0 - q(\zeta_0)\bigr] + O( \log r))\Bigr) 
				= o\Bigl( \exp( \psi(t))\Bigr). 
	\ee	
\end{proof} 

\section{Critical point and Gaussian approximation}\label{sec:saddlepoint}

Here we prove Lemmas~\ref{lem:avar}-\ref{lem:crit-magnitudes} and Proposition~\ref{prop:truncated} and we address steps 4 and~5 of the proof strategy. 

\subsection{Variational analysis of $f_n(x)$. Critical scales}\label{sec:variational}

\begin{proof} [Proof of Lemma~\ref{lem:avar}]
We treat the case $a=0$. Under Assumption~\ref{ass:convexity}(i), $q'$ is strictly convex and decreasing. Therefore 
	\be \label{eq:tangentbound}
	\begin{aligned}
		f'_n(x) & = q'(x) - \frac{N_n- x}{n \sigma^2} \\
			&  \geq q'(x_n^*)+ q''(x_n^*) (x-x_n^*) - \frac{N_n- x}{n \sigma^2} 
			 = \frac{N_n^*- N_n}{n\sigma^2}
	\end{aligned} 
	\ee
	with equality if and only if $x = x_n^*$. 
	If $N_n< N_n^*$,  we obtain $f'_n(x) >0$ on $(0,\infty)$ and parts (a) and (b) of the lemma follow right away. 
	
	 If $N_n > N_n^*$, then $f'_n(x_n^*) = (N_n^*- N_n)/(n\sigma^2) <0$ and $\lim_{x\to \infty} f'_n(x) = \infty$, so by the intermediate value theorem $f'_n$ has at least one zero in $(x_n^*,\infty)$. On the other hand
	 \be
	 	f''_n(x) = q''(x) + (n\sigma^2)^{-1}= q''(x) - q''(x_n^*) >0 \text{ on }(x_n^*,\infty)
	\ee
	 so $f'_n$ is strictly increasing and $f'_n$ has exactly one zero $x_n$ in $(x_n^*,\infty)$, moreover $f_n(x_n) = \min_{(x_n^*,\infty)} f_n$.  Since $q'(x_n)>0$ by Assumption~\ref{ass:convexity} and $q'(x_n) = (N_n-x_n)/(n\sigma^2)$, we must have $x_n<N_n$. This proves the first part of (c). 
	 
	 If in addition to $N_n>N_n^*$, we have $\limsup_{n\to \infty} N_n/(n \sigma^2) < \lim_{x\searrow a} q'(x)$, then $\lim_{x\searrow a} f'_n(x) >0$. We have already observed that $f'_n(x_n^*)<0$. By the intermediate value theorem,  $f'_n$ has at least one zero $x'_n$ in $(0,x_n^*)$. Since $f''_n(x) = q''(x) -q ''(x_n^*)<0$ on $(0,x_n^*)$, the zero is unique and corresponds to  maximizer. This completes the proof of (c).  The proof of (d) is similar to (c) and therefore omitted. 	 
\end{proof} 

\begin{proof} [Proof of Lemma~\ref{lem:breakeven}]
	We treat the case $a=0$. 
	Write $f_n(x) = I_n(x,N_n)$ with $I_n(x,y) = q(x) + [y-x]^2/[2n \sigma^2]$. For $y>N_n^*$, let $x_n(y)>x_n^*>x'_n(y)$ be the solutions of $\partial_x I_n(x,y) =0$, with $x'_n(y)$ well-defined for $N_n \leq n \sigma^2 \sup q'$ only. Notice that $x\mapsto I_n(x,y)$ is increasing in $(0,x'_n(y))$, decreasing in $(x'_n(y),x_n(y))$, and increasing in $(x_n(y),\infty)$. We have  
	\be \label{eq:breakeven1}
		\frac{\dd}{\dd y} \bigl[I_n(x_n(y),y) - I(0,y)\bigr] = \frac{y-x_n(y)}{n\sigma^2} - \frac{y}{n\sigma^2} = - \frac{x_n(y)}{n\sigma^2} < 0.
	\ee
	As $y\searrow N_n^*$ at fixed $n$, a careful examination of the proof of Lemma~\ref{lem:avar} shows $x_n(y)\searrow x_n^*$ and $x'_n(y) \nearrow x_n^*(y)$, hence 
	$I_n(x_n(y),y) \to I_n(x_n^*, N_n^*)$. 
	But $x\mapsto I_n(x,N_n^*)$ is strictly increasing on $(0,\infty)$ because for $N_n = N_n^*$, $\partial_x I_n (\cdot,N_n) = f'_n(x)\geq 0$ by Eq.~\eqref{eq:tangentbound}, hence 
	$I_n(x_n^*,N_n^*)> I_n(0,N_n^*)$ and by continuity
	\be \label{eq:breakeven2}
		\lim_{y\searrow N_n^*} \bigl[I_n(x_n(y),y) - I(0,y)\bigr] >0.
	\ee
	Assumption~\ref{ass:convexity} implies that $q(y)= o(y)$ as $y\to \infty$. It follows that 
	\be \label{eq:breakeven3}
		\lim_{y\to \infty} \bigl[I_n(y,y) - I(0,y)\bigr] = \lim_{y\to \infty}\bigl[q(y) - \frac{y^2}{2n \sigma^2}\bigr] = -\infty. 
	\ee
	Eqs.~\eqref{eq:breakeven1}--\eqref{eq:breakeven3} guarantee the existence and uniqueness of a solution $y=N_n^{**}$ to the equation $I_n(x_n(y),y) =I(0,y)$, and  (a)--(c) follow with 
 the observation $f_n(x_n)- f_n(0) = [I_n(x_n(y),y)- I_n(0,y)]|_{y=N_n}$. 
 \end{proof} 

\begin{proof}[Proof of Lemma~\ref{lem:critsequences}]
As noted after Assumption~\ref{ass:convexity}, we have 
$\lim_{x\to \infty} x^2 q''(x) = - \infty$, moreover from the definition~\eqref{eq:xnstar} of $x_n^*$ and the observation $x_n^*\to \infty$ we get 
\be 
	1 = \lim_{n\to \infty} n \sigma^2 |q''(x_n^*)| \gg \frac{n \sigma^2}{(x_n^*)^2}
\ee
hence $x_n^*\gg \sqrt{n}$. The inequality $x_n^*< N_n^*$ follows from the definition~\ref{eq:xnstar} of $N_n^*$ and the positivity of $q'$. The inequality $N_n^*< N_n^{**}$ holds true by definition of $N_n^{**}$. 
By Assumption~\ref{ass:convexity}(v) there exists $C>0$ such that $q(x)\leq C x^\alpha$ for all sufficiently large $x$. Fix $C'>C$ and $N_n \geq ( 2 C' n \sigma^2)^{1/(2-\alpha)}$. Then for large $n$, 
\be 
	\frac{q(N_n)}{N_n/(2 n \sigma^2)} \leq 2 n \sigma^2  C N_n^{\alpha-2} \leq \frac{C}{C'} <1.
\ee	
Write $C/C' = 1-\eps$. 
It follows that $f_n(N_n) = q(N_n) < (1-\eps) \frac{N_n}{n\sigma^2} 
\leq (1-\eps) (1+o(1)) f_n(a)$, and a fortiori $\min f_n \leq f_n(N_n) < f_n(a)$, which shows $N_n \geq N_n^{**}$. This proves $N_n^{**} = O(n^{1/(2-\alpha)})$ and completes the proof of the first part of the lemma.

Next suppose by contradiction that $N_n^* /x_n^*\to 1$. Then by Eq.~\eqref{eq:xnstar}
we must have $n \sigma^2 q'(x_n^*) /x_n^* \to 0$. Since $x_n^*\to \infty$,  Assumption~\ref{ass:convexity} yields $n \sigma^2 q''(x_n^*)\to 0$, contradicting the definition of $x_n^*$. Similarly, the assumption $N_n^*/x_n^* \to \infty$ leads to $n \sigma^2 q''(x_n^*) \to \infty$, contradicting again the definition of $x_n^*$. So $N_n^*/x_n^*$ stays bounded away from $1$ and from $\infty$. 
\end{proof}

\begin{proof} [Proof of Lemma~\ref{lem:crit-magnitudes}] 
	Remember $q'(x_n^*) = [N_n^* - x_n^*]/[n\sigma^2]$ by definition of $N_n^*$, and $q'(x_n) = [N_n - x_n]/[n\sigma^2]$ by definition of $x_n$. Since $q'$ is strictly decreasing we deduce 
	\be
		\frac{N_n - x_n}{n\sigma^2} = q'(x_n)< q'(x_n^*) =  \frac{N^*_n - x_n^*}{n\sigma^2} < \frac{N_n^*}{2\sigma^2},
	\ee
	so $N_n - x_n \leq N_n^*$ (see Figure~\ref{fig critical}).
	Since $q'$ is strictly convex, we have $q'(x_n^*)>q'(x_n) + q''(x_n)(x_n^*-x_n)$ hence 
	\be
		q''(x_n) >\frac{q'(x_n) - q'(x_n^*)}{x_n - x_n^*} = \frac{(N_n - x_n) - (N^*_n- x_n^*)}{n\sigma^2 (x_n - x_n^*)} = \frac{O(N_n^*)}{n \sigma^2 (N_n+ O(N_n^*))}
	\ee
	We also know that $q''(x_n) <0$, so we obtain
	\be
		 f''_n(x_n) =  q''(x_n) + \frac{1}{n\sigma^2} = \frac{1}{n\sigma^2} \Bigl( 1+ O\Bigl(\frac{N_n^*}{N_n}\Bigr) \Bigr). 
	\ee
\end{proof} 
\begin{centering}
\begin{figure}[htb]
\begin{picture}(0,0)%
\includegraphics{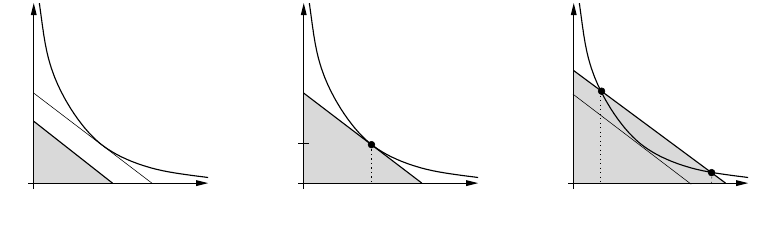}%
\end{picture}%
\setlength{\unitlength}{2368sp}%
\begingroup\makeatletter\ifx\SetFigFont\undefined%
\gdef\SetFigFont#1#2#3#4#5{%
  \reset@font\fontsize{#1}{#2pt}%
  \fontfamily{#3}\fontseries{#4}\fontshape{#5}%
  \selectfont}%
\fi\endgroup%
\begin{picture}(10221,3146)(151,-4685)
\put(6526,-4111){\makebox(0,0)[lb]{\smash{{\SetFigFont{6}{7.2}{\rmdefault}{\mddefault}{\updefault}{\color[rgb]{0,0,0}$x$}%
}}}}
\put(8101,-4186){\makebox(0,0)[lb]{\smash{{\SetFigFont{5}{6.0}{\rmdefault}{\mddefault}{\updefault}{\color[rgb]{0,0,0}$x_n'$}%
}}}}
\put(5026,-4186){\makebox(0,0)[lb]{\smash{{\SetFigFont{5}{6.0}{\rmdefault}{\mddefault}{\updefault}{\color[rgb]{0,0,0}$x_n^*$}%
}}}}
\put(5701,-4186){\makebox(0,0)[lb]{\smash{{\SetFigFont{5}{6.0}{\rmdefault}{\mddefault}{\updefault}{\color[rgb]{0,0,0}$N_n^*$}%
}}}}
\put(4351,-1636){\makebox(0,0)[lb]{\smash{{\SetFigFont{5}{6.0}{\rmdefault}{\mddefault}{\updefault}{\color[rgb]{0,0,0}$q'(x)$}%
}}}}
\put(2926,-4111){\makebox(0,0)[lb]{\smash{{\SetFigFont{6}{7.2}{\rmdefault}{\mddefault}{\updefault}{\color[rgb]{0,0,0}$x$}%
}}}}
\put(751,-1636){\makebox(0,0)[lb]{\smash{{\SetFigFont{5}{6.0}{\rmdefault}{\mddefault}{\updefault}{\color[rgb]{0,0,0}$q'(x)$}%
}}}}
\put(2101,-4186){\makebox(0,0)[lb]{\smash{{\SetFigFont{5}{6.0}{\rmdefault}{\mddefault}{\updefault}{\color[rgb]{0,0,0}$N_n^*$}%
}}}}
\put(1576,-4186){\makebox(0,0)[lb]{\smash{{\SetFigFont{5}{6.0}{\rmdefault}{\mddefault}{\updefault}{\color[rgb]{0,0,0}$N_n$}%
}}}}
\put(151,-2761){\makebox(0,0)[lb]{\smash{{\SetFigFont{5}{6.0}{\rmdefault}{\mddefault}{\updefault}{\color[rgb]{0,0,0}$\frac{N_n^*}{n\sigma^2}$}%
}}}}
\put(151,-3136){\makebox(0,0)[lb]{\smash{{\SetFigFont{5}{6.0}{\rmdefault}{\mddefault}{\updefault}{\color[rgb]{0,0,0}$\frac{N_n}{n\sigma^2}$}%
}}}}
\put(3751,-2761){\makebox(0,0)[lb]{\smash{{\SetFigFont{5}{6.0}{\rmdefault}{\mddefault}{\updefault}{\color[rgb]{0,0,0}$\frac{N_n^*}{n\sigma^2}$}%
}}}}
\put(10126,-4111){\makebox(0,0)[lb]{\smash{{\SetFigFont{6}{7.2}{\rmdefault}{\mddefault}{\updefault}{\color[rgb]{0,0,0}$x$}%
}}}}
\put(7951,-1636){\makebox(0,0)[lb]{\smash{{\SetFigFont{5}{6.0}{\rmdefault}{\mddefault}{\updefault}{\color[rgb]{0,0,0}$q'(x)$}%
}}}}
\put(9226,-4186){\makebox(0,0)[lb]{\smash{{\SetFigFont{5}{6.0}{\rmdefault}{\mddefault}{\updefault}{\color[rgb]{0,0,0}$N_n^*$}%
}}}}
\put(9751,-4186){\makebox(0,0)[lb]{\smash{{\SetFigFont{5}{6.0}{\rmdefault}{\mddefault}{\updefault}{\color[rgb]{0,0,0}$N_n$}%
}}}}
\put(9601,-3736){\makebox(0,0)[lb]{\smash{{\SetFigFont{5}{6.0}{\rmdefault}{\mddefault}{\updefault}{\color[rgb]{0,0,0}$x_n$}%
}}}}
\put(7801,-4636){\makebox(0,0)[lb]{\smash{{\SetFigFont{7}{8.4}{\rmdefault}{\mddefault}{\updefault}{\color[rgb]{0,0,0}(c) $N_n > N_n^*$ : two solutions}%
}}}}
\put(4201,-4636){\makebox(0,0)[lb]{\smash{{\SetFigFont{7}{8.4}{\rmdefault}{\mddefault}{\updefault}{\color[rgb]{0,0,0}(b) $N_n = N_n^*$ : one solution}%
}}}}
\put(601,-4636){\makebox(0,0)[lb]{\smash{{\SetFigFont{7}{8.4}{\rmdefault}{\mddefault}{\updefault}{\color[rgb]{0,0,0}(a) $N_n < N_n^*$ : no solutions}%
}}}}
\put(7351,-2836){\makebox(0,0)[lb]{\smash{{\SetFigFont{5}{6.0}{\rmdefault}{\mddefault}{\updefault}{\color[rgb]{0,0,0}$\frac{N_n^*}{n\sigma^2}$}%
}}}}
\put(7351,-2386){\makebox(0,0)[lb]{\smash{{\SetFigFont{5}{6.0}{\rmdefault}{\mddefault}{\updefault}{\color[rgb]{0,0,0}$\frac{N_n}{n\sigma^2}$}%
}}}}
\end{picture}%
\caption{Solutions to $q'(x) = (N_n - x)/(n\sigma^2)$ (=critical points of $f_n$)
  as in Lemma~\ref{lem:crit-magnitudes}.}
\label{fig critical}
\end{figure}
\end{centering}

For the proof of Theorem~\ref{thm:moderate} in the case $N_n \sim N_n^*$ we need the following. 

\begin{lemma} \label{lem:fncri}
	Let $N_n = N_n^*$. We have 
	\be 
		f_n(x_n^*) - \frac{(N^*_n)^2}{2 n \sigma^2 } \geq \eps \frac{(N_n^*)^2}{2 n \sigma^2 }. 
	\ee
	for some $\eps>0$ and all sufficiently large $n$, 
\end{lemma}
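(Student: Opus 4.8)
\emph{Strategy.} The plan is to reduce the claimed inequality to a statement about $q$ at the single point $x_n^*$, and then to extract a strict gain from the decay rate of $q''$ encoded in Assumption~\ref{ass:convexity}(iv). The point is that the cheap estimates (Assumption~\ref{ass:convexity}(iii), (v)) land exactly on a borderline constant $\tfrac12$, so a finer input is genuinely needed.

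\emph{Reduction.} Using the defining relations $|q''(x_n^*)| = 1/(n\sigma^2)$ and $N_n^*-x_n^* = n\sigma^2 q'(x_n^*)$ from~\eqref{eq:xnstar}, a direct computation with $N_n = N_n^*$ gives
\[
	f_n(x_n^*) - \frac{(N_n^*)^2}{2n\sigma^2}
	= q(x_n^*) + \frac{(N_n^*-x_n^*)^2}{2n\sigma^2} - \frac{(N_n^*)^2}{2n\sigma^2}
	= q(x_n^*) - x_n^* q'(x_n^*) - \tfrac12 (x_n^*)^2 |q''(x_n^*)|.
\]
Next, integration by parts yields, for any fixed $x_0 > a$,
\[
	q(x_n^*) - x_n^* q'(x_n^*) = C_0 + \int_{x_0}^{x_n^*} u\,|q''(u)|\,\dd u, \qquad C_0 := q(x_0) - x_0 q'(x_0),
\]
so the quantity to be bounded below is $C_0 + \int_{x_0}^{x_n^*} u|q''(u)|\,\dd u - \tfrac12 (x_n^*)^2|q''(x_n^*)|$. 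The crude monotonicity bound $|q''(u)|\geq |q''(x_n^*)|$ (valid since $q^{(3)}>0$) only gives $\int_{x_0}^{x_n^*} u|q''(u)|\,\dd u \geq (\tfrac12-o(1))(x_n^*)^2|q''(x_n^*)|$, i.e.\ nothing strictly positive; the improvement must come from the rate at which $|q''|$ decays.

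\emph{Key estimate.} Assumption~\ref{ass:convexity}(iv) gives $\frac{\dd}{\dd u}\log|q''(u)| = -q^{(3)}(u)/|q''(u)| \leq -c_3/u$, hence $|q''(u)| \geq |q''(x_n^*)|(x_n^*/u)^{c_3}$ for $a < u \leq x_n^*$; moreover $c_3 < 2$, for otherwise $|q''(x)|=O(x^{-2})$ contradicting $x^2|q''(x)|\to\infty$ (recall $x^2 q''(x)/\log x\to-\infty$). Therefore
\[
	\int_{x_0}^{x_n^*} u\,|q''(u)|\,\dd u \geq |q''(x_n^*)|\,(x_n^*)^{c_3}\int_{x_0}^{x_n^*} u^{1-c_3}\,\dd u
	= \frac{|q''(x_n^*)|(x_n^*)^2}{2-c_3}\Bigl(1-(x_0/x_n^*)^{2-c_3}\Bigr),
\]
and since $x_n^*\to\infty$ and $1/(2-c_3)>1/2$, there is $\varepsilon_0>0$ (one may take $\varepsilon_0 = c_3/[4(2-c_3)]$) with $\int_{x_0}^{x_n^*} u|q''(u)|\,\dd u \geq (\tfrac12+\varepsilon_0)(x_n^*)^2|q''(x_n^*)|$ for all large $n$. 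Plugging in, $f_n(x_n^*) - \frac{(N_n^*)^2}{2n\sigma^2} \geq C_0 + \varepsilon_0 (x_n^*)^2|q''(x_n^*)|$.

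\emph{Conclusion.} By Lemma~\ref{lem:critsequences} (or directly: Assumption~\ref{ass:convexity}(iii) gives $n\sigma^2 q'(x_n^*)=q'(x_n^*)/|q''(x_n^*)|\leq x_n^*/c_1$, hence $N_n^*\leq(1+1/c_1)x_n^*$) one has $N_n^*\leq C x_n^*$, so $\frac{(N_n^*)^2}{2n\sigma^2} = \tfrac12(N_n^*)^2|q''(x_n^*)| \leq \tfrac{C^2}{2}(x_n^*)^2|q''(x_n^*)|$, i.e.\ $(x_n^*)^2|q''(x_n^*)|\geq \frac{2}{C^2}\cdot\frac{(N_n^*)^2}{2n\sigma^2}$. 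Since also $(x_n^*)^2|q''(x_n^*)|\to\infty$ (so the fixed constant $C_0$ is absorbed), for $n$ large
\[
	f_n(x_n^*) - \frac{(N_n^*)^2}{2n\sigma^2} \geq \frac{\varepsilon_0}{C^2}\cdot\frac{(N_n^*)^2}{2n\sigma^2},
\]
which is the claim with $\varepsilon = \varepsilon_0/C^2>0$. The only real obstacle is the borderline nature of the reduced inequality: Assumptions~\ref{ass:convexity}(iii) and~(v) give merely $q(x_n^*)\gtrsim x_n^* q'(x_n^*)$ with constants that need not beat $\tfrac12$, so the crux is to squeeze the definite surplus $\varepsilon_0$ out of the finer decay control in Assumption~\ref{ass:convexity}(iv); the rest is bookkeeping.
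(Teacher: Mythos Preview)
Your proof is correct and takes a genuinely different route from the paper's.

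The paper argues directly with $f_n$: using $f_n'(x_n^*)=0$ it writes $f_n'(x)=\frac{1}{n\sigma^2}\int_x^{x_n^*}(n\sigma^2|q''(y)|-1)\,\dd y$, observes that on a half-interval (say $y\leq x_n^*/2$) the integrand $n\sigma^2|q''(y)|-1$ is bounded below by a positive constant (via~\eqref{eq:qsecvar}, i.e.\ Assumption~\ref{ass:convexity}(iv)), and then integrates $f_n'$ once more from $0$ to $x_n^*$ to extract a contribution of size $\const\cdot(x_n^*)^2/(n\sigma^2)$. Your approach instead performs the reduction $f_n(x_n^*)-\frac{(N_n^*)^2}{2n\sigma^2}=q(x_n^*)-x_n^*q'(x_n^*)-\tfrac12(x_n^*)^2|q''(x_n^*)|$ first, then writes $q-xq'$ as an integral of $u|q''(u)|$ and feeds in the decay bound $|q''(u)|\geq|q''(x_n^*)|(x_n^*/u)^{c_3}$ directly.

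Both arguments rest on the same ingredient, the lower bound in Assumption~\ref{ass:convexity}(iv), and both convert it (implicitly or explicitly) into the inequality $c_3<2$ via $x^2|q''(x)|\to\infty$. What your version buys is transparency: the borderline nature of the problem (the naive monotonicity bound lands exactly on $\tfrac12$) and the source of the surplus $\varepsilon_0=\frac{1}{2-c_3}-\tfrac12>0$ are made completely explicit, and there is no need to split the interval or invoke $f_n'\geq 0$ on the complementary piece. The paper's version has the advantage of working entirely inside the variational picture for $f_n$ and generalizes slightly more smoothly to the case $a>0$, $q(a)\neq 0$ (which you handle via the absorbed constant $C_0$).
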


\begin{proof}
	Assume $a=0$ and $q(0)=0$. By Lemma~\ref{lem:breakeven} we already know $f_n(x_n^*)- f_n(0)>0$, we prove $f_n(x_n^*)-f_n(0)>\eps f_n(0)$. Since $f'_n(x_n^*)=0$ for $N_n = N_n^*$, 
	\be 
		f'_n(x) = \int_{x_n^*}^x f''_n(y) \dd y = \frac{1}{n \sigma^2} \int_{x}^{x_n^*} \bigl(n \sigma^2 |q''(y)| -1 \bigr) \dd y. 
	\ee
	For $y\geq x_n^*/2$ the integrand stays bounded away from zero, hence 
	\be 
		f'_n(x) \geq \frac{c (x_n^* - x)}{n \sigma^2}
	\ee	
	 for all $x \geq x_n^*/2$ and some $c>0$. Then 
	\be 
		f_n(x_n^*) - f_n(0) = \int_0^{x_n^*} f'_n(y) \dd y \geq \frac{c{x_n^*}^2}{8 n \sigma^2}
	\ee
	and the statement follows from $N_n^* = O(x_n^*)$ (Lemma~\ref{lem:critsequences}) 
	and $f_n(0) = \frac{(N^*_n)^2}{2 n \sigma^2 }(1+o(1))$. 
 The proof for $a>0$ is based on a similar estimate of $f_n(x_n^*)- f_n(2a)$ and therefore omitted. 
\end{proof}
\begin{proof}[Proof of Prop.~\ref{prop:truncated}]
	Fix $r\in \N_0$. We have 
	\be
		f'_{nr}(x) 
		= f'_n(x) + O\Bigl((\frac{N_n-x}{n\sigma^2})^2\Bigr). 
	\ee
	Clearly $f'_{nr}(N_n) = q'(N_n)>0$. Fix $\delta\in (0,\eps)$ and $x\in (x_n^*,(1+\delta)x_n^*)$. 
	Remembering~\eqref{eq:xnstar} and the monotonicity of $q'$, we obtain 
	\be \label{eq:trusti}
	\begin{aligned}
		f'_{nr}(x) & \leq \frac{N_n^* - x_n^*}{n \sigma^2}  - (1+o(1))\frac{N_n -(1+\delta)x_n^*}{n \sigma^2} \\
		 & = - \frac{1}{n\sigma^2}\bigl( N_n +o(N_n) - N_n^* - \delta x_n^* \bigr)
		  \leq - \frac{1}{n\sigma^2} \bigl( (1+o(1)) \eps N_n^* - \delta x_n^* \bigr)
	\end{aligned}	
	\ee
	which is eventually negative because of $x_n^*\leq N_n^*$ and $\delta <\eps$. It follows that $f'_{nr}$ does indeed have a zero $x_{nr}$ which lies between $(1+\delta)x_{n}^*$ and $N_n$. On $(x_n^*,(1+\delta)x_n^*)$ $f_{nr}$ is strictly decreasing by~\eqref{eq:trusti}, on $((1+\delta)x_n^*,N_n)$ the second derivative satisfies 
	\be
		f''_{nr}(x) = \frac{1}{n\sigma^2}\Bigl( 1- n \sigma^2 |q''(x)| + O\bigl( \frac{N_n}{n}\bigr)\Bigr)
	\ee
	which stays bounded away from $0$, hence $f_{nr}$ is strictly convex. It follows that $x_{nr}$ is the unique zero of $f'_{nr}$ and the maximizer of $f_{nr}$ in $(x_n^*,N_n)$. 
	Moreover 
	\be
		(1+o(1))\frac{N_n - x_{nr}}{n\sigma^2} = q'(x_{nr})\leq q'(x_n^*) = \frac{N_n^*- x_n^*}{n\sigma^2}
	\ee
	hence $x_{nr} = N_n - (1+o(1))n \sigma^2 q'(x_{nr}) = N_n +  O(N_n^*)$. The asymptotic expression for $f_{nr}(x_{nr})$ is easily checked. 
\end{proof}	

\subsection{Critical points}
 We look for critical points $(t_n,\zeta_n)$ with $t_n\searrow 0$ and $\zeta_n\in (a,\infty)$ of  
\be
	\Phi_n(t,\zeta) = - q(\zeta) + n \varphi(t) - mt + t \zeta
\ee
(remember $m= \mu n + N_n$).
Since our contour integrals involve integrals over $z=\e{t}$, it is 
 convenient to work with functions of a single variable $t$: for $t<\lim_{x\to a} |q''(x)|$, let 
\be
	\Psi_n(t) = \Phi_n(t,\zeta(t))  = n \varphi(t) - m t+ \psi(t).
\ee
where $\zeta(t)$ is the solution of $q'(\zeta(t))=t$ as on p.~\pageref{eq:psit}. 
Then $(t,\zeta)$ is a critical point of $\Phi_n(t,\zeta)$ if and only if $\zeta = \zeta(t)$ and $\Psi'_n(t) =0$. So instead of looking for bivariate critical points, we may look for zeros of $\Psi'_n(t)$ in $(0,\sup |q''(x)|)$. For later purpose we note the relations
\begin{align}
	\Psi'_n(t) & = n \varphi'(t) - m + \zeta(t) = - (N_n - \zeta(t)) + n \sigma^2 t(1 +O(t)) \label{eq:Psinprime}\\
	\Psi''_n(t) & = n \varphi''(t) + \psi''(t) = n \sigma^2 (1+ O(t)) + \frac{1}{q''(\zeta(t))},
\end{align} 
with $\psi(t)$ defined in~\eqref{eq:psit}. The variable $t$ and the analysis of $\Psi_n$ are in some sense dual to the variable $x$ (or $\zeta$) and the variational problem $f_n(x)=\min$ analyzed in Section~\ref{sec:variational}. The analysis becomes more involved, however, because we need to take into account correction terms from $\sum_{j\geq 3} \kappa_j t^j/j!$. 

\begin{lemma}[Inflection point of $\Psi_n$] \label{lem:inflection}
	Let $\delta>0$ such that $\inf_{(0,\delta)}\varphi''(t)>0$. Then for all sufficiently large $n$, $\Psi_n$ has an inflection point $t_n^*\in (0,\delta)$. Any inflection point satisfies $t_n^*\sim q'(x_n^*)$, and $\Psi_n$ is concave below the smallest inflection point and convex above the largest inflection point.
\end{lemma}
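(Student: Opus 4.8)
The plan is to analyze the zeros of
\[
  \Psi_n''(t) = n\varphi''(t) + \psi''(t) = n\varphi''(t) + \frac{1}{q''(\xi(t))}
\]
on $(0,\delta)$, using \eqref{eq:psidual}. First I would record the endpoint behaviour: as $t\searrow 0$ one has $\xi(t)\to\infty$ (because $q'$ is positive, strictly decreasing, and tends to $0$, by Assumption~\ref{ass:convexity}(i) and \eqref{eq:qprimetozero}), so $q''(\xi(t))\to 0^-$ and $1/q''(\xi(t))\to-\infty$, while $n\varphi''(t)\to n\sigma^2$; hence $\Psi_n''(0^+)=-\infty$ and $\Psi_n$ is concave just to the right of $0$. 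Next I would exhibit a point where $\Psi_n''>0$. Take $t_0:=q'(x_n^*/2)$, so that $\xi(t_0)=x_n^*/2$ and, since $x_n^*\to\infty$, $t_0\to0$ (in particular $t_0<\delta$ eventually). Integrating the bound $\tfrac{\mathrm d}{\mathrm dx}\log|q''(x)|\in[-c_4/x,-c_3/x]$ coming from Assumption~\ref{ass:convexity}(iv), together with $|q''(x_n^*)|=1/(n\sigma^2)$ from \eqref{eq:xnstar}, gives $|q''(x_n^*/2)|\ge 2^{c_3}/(n\sigma^2)$, hence $\psi''(t_0)\ge -2^{-c_3}n\sigma^2$; combined with $n\varphi''(t_0)=n\sigma^2(1+o(1))$ (using $\varphi''(0)=\sigma^2$, Theorem~\ref{thm:bval}(c)), this yields $\Psi_n''(t_0)\ge n\sigma^2(1-2^{-c_3}+o(1))>0$ for $n$ large, since $c_3>0$. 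By the intermediate value theorem $\Psi_n$ has an inflection point $t_n^*\in(0,t_0)\subset(0,\delta)$.

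Second, I would pin down \emph{every} inflection point $t_n^*\in(0,\delta)$, characterized by $n\varphi''(t_n^*)=1/|q''(\xi(t_n^*))|$. I claim $t_n^*\to0$: otherwise a subsequence stays in a fixed compact $[\varepsilon_0,\delta]$, so $\xi(t_n^*)$ stays in a fixed compact subset of $(a,\infty)$ and $1/|q''(\xi(t_n^*))|$ is bounded, whereas $n\varphi''(t_n^*)\ge n\inf_{(0,\delta)}\varphi''\to\infty$ --- a contradiction. Hence $\varphi''(t_n^*)\to\sigma^2$ and $|q''(\xi(t_n^*))|=\tfrac1{n\sigma^2}(1+o(1))=|q''(x_n^*)|(1+o(1))$; the monotonicity of $q''$ and the two-sided bound $(y/x)^{c_3}\le|q''(x)|/|q''(y)|\le(y/x)^{c_4}$ for $y>x$ (Assumption~\ref{ass:convexity}(iv), cf.\ \eqref{eq:qsecvar}) force $\xi(t_n^*)\sim x_n^*$, and then \eqref{eq:qprimevar} gives $t_n^*=q'(\xi(t_n^*))\sim q'(x_n^*)$.

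For the final assertion, $\Psi_n''$ is continuous and negative on a right-neighbourhood of $0$, so it remains negative on $(0,t_n^{*,-})$ where $t_n^{*,-}$ is the smallest inflection point, i.e.\ $\Psi_n$ is concave there. For the largest inflection point $t_n^{*,+}$, every inflection point equals $(1+o(1))q'(x_n^*)$, while $t_0=q'(x_n^*/2)\ge 2^{c_1}q'(x_n^*)$ by \eqref{eq:qprimevar} is bounded away from $q'(x_n^*)$, so eventually $t_n^{*,+}<t_0$; since $\Psi_n''(t_0)>0$ and $\Psi_n''$ has no zero on $(t_n^{*,+},\delta)$, it is positive throughout $(t_n^{*,+},\delta)$, so $\Psi_n$ is convex there. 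I expect the main obstacle to be the \emph{sharp} localization $t_n^*\sim q'(x_n^*)$ rather than mere comparability $t_n^*\asymp q'(x_n^*)$: this needs the preliminary step $t_n^*\to 0$ (to force $\varphi''(t_n^*)\to\sigma^2$) followed by two successive applications of the power-law comparisons \eqref{eq:qprimevar}--\eqref{eq:qsecvar}, first to pass from $|q''(\xi(t_n^*))|\sim|q''(x_n^*)|$ to $\xi(t_n^*)\sim x_n^*$ and then to $t_n^*\sim q'(x_n^*)$; everything else is bookkeeping with the intermediate value theorem.
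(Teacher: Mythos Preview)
Your argument is correct and follows essentially the same approach as the paper's: both establish $\Psi_n''(0^+)=-\infty$, exhibit a point above $q'(x_n^*)$ where $\Psi_n''>0$, apply the intermediate value theorem, and then deduce $t_n^*\sim q'(x_n^*)$ from $|q''(\xi(t_n^*))|\sim|q''(x_n^*)|$ via the power-law comparisons \eqref{eq:qprimevar}--\eqref{eq:qsecvar}. The only difference is cosmetic: the paper picks an abstract sequence $\eps_n\gg q'(x_n^*)$ with $|\psi''(\eps_n)|=o(n\sigma^2)$, whereas you choose the concrete point $t_0=q'(x_n^*/2)$ and compute $|\psi''(t_0)|\le 2^{-c_3}n\sigma^2$; your treatment of the concavity/convexity statement is also slightly more explicit than the paper's.
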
 

\noindent If the inflection point is unique, then $\Psi_n$ is concave on $(0,t_n^*)$ and $(t_n^*,\delta)$. In general we do not know whether the inflection point is unique, however the asymptotic behavior $t_n^* \sim q'(x_n^*)$ is uniquely determined, and all statements below hold for every inflection point $t_n^*$.

\begin{remark} \label{rem:tnstar}
	It follows that $t_n^*$ is of the order of $N_n^*/n$: from Lemma~\ref{lem:inflection} and the definition of $N_n^*$, we have 
	$t_n^* \sim \frac{N_n^* - x_n^*}{n\sigma^2}$, and then Lemma~\ref{lem:critsequences} yields 
	\be 
			\delta \frac{N_n^*}{n} \leq	t_n^* \leq C \frac{N_n^*}{n}.
	\ee 
\end{remark} 

\begin{proof}[Proof of Lemma~\ref{lem:inflection}]
	As a preliminary observation, we note that any solution $t_n^*$ of $\Psi'_n(t) =0$ converges to zero: this is because $-\psi''(t^*_n) = n \varphi''(t)\geq n \inf_{(0,\delta)} \varphi'' \to \infty$. 
 
	As $t\searrow 0$ at fixed $n$, we have $\varphi''(t)\to \sigma^2$ and $\psi''(t) \to -\infty$ hence $\Psi_n''(t)\to - \infty$. On the other hand we may choose $\eps_n$ in such a way that $\eps_n\searrow 0$ and $\Psi''_n(\eps_n)\to \infty$: indeed $\psi''(q'(x_n^*)) = 1 / q''(x_n^*) = - n \sigma^2$ by definition of $x_n^*$, so choosing $\eps_n\gg q'(x_n^*)$ in such a way that $|\psi''(\eps_n)|\ll |\psi''(q'(x_n^*))|= n \sigma^2$ we find $\Psi_n''(\eps_n) = (1+o(1)) n\sigma^2 + o(n \sigma^2) \to \infty$. 
	It follows from the intermediate value theorem that $\Psi_n''(t)=0$ has a solution $t^*_n$ in $(0,\eps_n)$. It satisfies 
	\be
		\frac{q''(x_n^*)}{q''(\zeta(t_n^*))} =  - \frac{\psi''(t_n^*)}{n \sigma^2} =  \frac{\varphi''(t_n^*)}{\sigma^2} = 1 + O(t_n^*)\to 1. 
	\ee
	Assumption~\ref{ass:convexity}(iv) and its consequence~\eqref{eq:qsecvar} imply that $\zeta(t_n^*) / x_n^*\to 1$ and $t_n^* = q'(\zeta(t_n^*))\sim q'(x_n^*)$.
\end{proof} 

\noindent $\Psi'_n(t)$ is positive for small $t$ and decreasing on $(0,t_n^*)$ and increasing on $(t_n^*,\delta)$. Define $\zeta_n^* = \zeta(t_n^*)$ and notice $\zeta_n^* \sim x_n^*$ from the proof of the previous lemma. 

%

For $N_n \ll n$, let $\eta_n\sim N_n/(n\sigma^2)$ be the solution of~\eqref{eq:etan}. Notice $\Psi'_n(\eta_n) = \zeta(\eta_n)\to \infty$. 

\begin{lemma} \label{lem:crit-medium}
	Let $\delta>0$ be as in Lemma~\ref{lem:inflection}, $\eps>0$, and $t_n^*$ an inflection point of $\Psi_n$. Let $N_n\to \infty$ with $(1+\eps) N_n^*\leq N_n \ll n$. Then for sufficiently large $n$, $\Psi'_n$ has exactly two zeros in $(0,\delta)$, one zero $t_n \in (0, t_n^*)$ and another $t'_n \in (t_n^*,\eta_n)$. Set $\zeta_n := \zeta(t_n)$. We have
	$$	0 \leq N_n - \zeta_n = O(N_n^*), $$
	$ \limsup_{n\to \infty} t_n / t_n^*<1$, $\liminf_{n\to \infty} \zeta_n/x_n^*>1$, and $\liminf t'_n/\eta_n >0$.
\end{lemma}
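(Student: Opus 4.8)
The plan is to determine the zeros of $\Psi'_n$ on $(0,\delta)$ by examining the shape of $\Psi'_n$. By the monotonicity properties recorded right after Lemma~\ref{lem:inflection}, $\Psi'_n$ is positive for small $t$, strictly decreasing on $(0,t_n^*)$ and strictly increasing on $(t_n^*,\delta)$; in particular it has at most one zero on each of these intervals, hence at most two in $(0,\delta)$, and it has exactly two as soon as $\Psi'_n(t_n^*)<0$ while $\Psi'_n$ is positive at the endpoints of some interval $(0,\eta_n)$ with $t_n^*<\eta_n<\delta$. So the crux is to show $\Psi'_n(t_n^*)<0$ and then to locate $t_n$ and $t'_n$ quantitatively.

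The convenient device is the function $g(x):=x+n\sigma^2q'(x)$. By Assumption~\ref{ass:convexity} it is strictly convex ($g''=n\sigma^2q^{(3)}>0$), $g'(x)=1+n\sigma^2q''(x)$ vanishes only at $x_n^*$, and $g(x_n^*)=N_n^*$ by~\eqref{eq:xnstar}, so $g$ attains its unique minimum $N_n^*$ at $x_n^*$, with $g''\asymp 1/x_n^*$ near $x_n^*$ by Assumption~\ref{ass:convexity}(iv). Since $t=q'(\xi(t))$, equation~\eqref{eq:Psinprime} reads $\Psi'_n(t)=-N_n+g(\xi(t))+O(nt^2)$. I would evaluate this at three places. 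As $t\searrow0$ one has $\xi(t)\to\infty$, hence $\Psi'_n(t)\to+\infty$. At $t=t_n^*$, Lemma~\ref{lem:inflection} gives $\xi(t_n^*)=(1+o(1))x_n^*$, so by the flatness of $g$ at its minimum $g(\xi(t_n^*))=N_n^*+O\big((\xi(t_n^*)-x_n^*)^2/x_n^*\big)=(1+o(1))N_n^*$, while $O(n(t_n^*)^2)=O((N_n^*)^2/n)=o(N_n^*)$ because $t_n^*\asymp N_n^*/n$ and $N_n^*=o(n)$ by Lemma~\ref{lem:critsequences}; therefore $\Psi'_n(t_n^*)=-N_n+(1+o(1))N_n^*\le(-\eps+o(1))N_n^*<0$ for large $n$, using $N_n\ge(1+\eps)N_n^*$. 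At $t=\eta_n$, which lies in $(t_n^*,\delta)$ for large $n$ (indeed $\eta_n\sim N_n/(n\sigma^2)\to0$ because $N_n\ll n$, and $t_n^*\sim q'(x_n^*)<N_n^*/(n\sigma^2)\le N_n/((1+\eps)n\sigma^2)\sim\eta_n/(1+\eps)$), one has $\Psi'_n(\eta_n)=\xi(\eta_n)>0$ by the choice~\eqref{eq:etan}; as $\Psi'_n$ increases on $(\eta_n,\delta)$ it stays positive there. Hence $\Psi'_n$ has exactly two zeros in $(0,\delta)$: $t_n\in(0,t_n^*)$ and $t'_n\in(t_n^*,\eta_n)$.

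The remaining assertions are quantitative refinements. The bound on $N_n-\xi_n$ is immediate from $\Psi'_n(t_n)=0$: $N_n-\xi_n=n\sigma^2t_n(1+O(t_n))\ge0$, and $t_n<t_n^*=O(N_n^*/(n\sigma^2))$ forces $N_n-\xi_n=O(N_n^*)$. For $\limsup t_n/t_n^*<1$ and $\liminf\xi_n/x_n^*>1$ I would use~\eqref{eq:qprimevar}, which gives $\lambda^{-1/c_2}\xi(t)\le\xi(\lambda t)\le\lambda^{-1/c_1}\xi(t)$ for $\lambda\in(0,1)$; fix $\lambda\in\big((1+\eps)^{-c_1},1\big)$ and put $c:=\lambda^{-1/c_1}\in(1,1+\eps)$. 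Using $\xi(\lambda t_n^*)\le(1+o(1))c\,x_n^*$ and $n\sigma^2\lambda t_n^*=(1+o(1))\lambda(N_n^*-x_n^*)$ together with $x_n^*\le N_n^*$, one finds $\Psi'_n(\lambda t_n^*)\le-N_n+(1+o(1))cN_n^*\le(c-(1+\eps)+o(1))N_n^*<0$; since $\Psi'_n$ decreases on $(0,t_n^*)$ this gives $t_n<\lambda t_n^*$ and $\xi_n=\xi(t_n)>\xi(\lambda t_n^*)\ge(1+o(1))\lambda^{-1/c_2}x_n^*$ with $\lambda^{-1/c_2}>1$. Finally, $\liminf t'_n/\eta_n>0$ follows by contradiction: if $t'_n/\eta_n\to0$ along a subsequence then $n\sigma^2t'_n=o(N_n)$, so $\Psi'_n(t'_n)=0$ yields $\xi(t'_n)=(1+o(1))N_n$ and hence $t'_n=q'(\xi(t'_n))=(1+o(1))q'(N_n)$ by~\eqref{eq:qprimevar}; but $t'_n>t_n^*\sim q'(x_n^*)$, and~\eqref{eq:qprimevar} together with the fact that $N_n/x_n^*$ is bounded away from $1$ (Lemma~\ref{lem:critsequences} and $N_n\ge(1+\eps)N_n^*$) give $q'(x_n^*)\ge(1+\rho)q'(N_n)$ for some fixed $\rho>0$, a contradiction.

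The step I expect to be the main obstacle is the estimate $\Psi'_n(t_n^*)=(1+o(1))(N_n^*-N_n)$: it relies on recognizing $g$, on exploiting that $g$ is flat to first order at its minimizer $x_n^*$, and on showing that the remainder $O(n(t_n^*)^2)$ is negligible against $N_n^*$ (which in turn uses $N_n^*=o(n)$). Once that is settled, the refinements in the last paragraph reduce to the slowly-varying bounds~\eqref{eq:qprimevar} for $q'$, used throughout together with the separation $N_n/x_n^*\ge(1+\eps)(N_n^*/x_n^*)$ being bounded away from $1$.
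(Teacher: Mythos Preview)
Your proof is correct and follows the same overall strategy as the paper's: evaluate $\Psi'_n$ at $0^+$, at $t_n^*$, and at $\eta_n$, then use the concave/convex structure of $\Psi_n$ to locate the two zeros. Your packaging via the auxiliary function $g(x)=x+n\sigma^2 q'(x)$, strictly convex with minimum $g(x_n^*)=N_n^*$, is a clean way to obtain the key estimate $\Psi'_n(t_n^*)=-(N_n-N_n^*)+o(N_n^*)$ that the paper derives by hand in~\eqref{eq:crin}. For the refinements you and the paper swap tactics: the paper proves $\limsup t_n/t_n^*<1$ by contradiction (assume $t_n/t_n^*\to1$ and rerun~\eqref{eq:crin}) while you evaluate $\Psi'_n(\lambda t_n^*)$ directly for a well-chosen $\lambda<1$; conversely, for $\liminf t'_n/\eta_n>0$ the paper gives a one-line direct bound via $\xi(t'_n)\le(1+o(1))x_n^*$ whereas you argue by contradiction. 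One small caveat: the monotonicity of $\Psi'_n$ on all of $(0,t_n^*)$ and $(t_n^*,\delta)$ that you invoke up front is only guaranteed below the \emph{smallest} and above the \emph{largest} inflection point (Lemma~\ref{lem:inflection}); the paper handles this explicitly at the end of its proof by noting that its estimates hold for every inflection point and then choosing the extremal one, and your argument goes through the same way.
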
 

\begin{remark} \label{rem:tn}
	When $N_n \gg N_n^*$, we use Eq.~\eqref{eq:qprimevar}, Lemmas~\ref{lem:critsequences} and~\ref{lem:inflection} and find 
	\be 
	  	\zeta_n\sim N_n, \quad t_n = q'(\zeta_n)\sim q'(N_n) \ll q'(x_n^*)  \sim t_n^* = O\Bigl(\frac{N_n^*}{n}\Bigr)
  	\ee  	
  	hence $t_n = o(t_n^*)$. 	
	When $N_n = O(N_n^*)$, we have instead $c^{-1} t_n^* \leq t_n \leq c\, t_n^*$ for some $c>0$ and all sufficiently large $n$: The upper bound is part of Lemma~\ref{lem:crit-medium}. For the lower bound, we note that $t_n = q'(\zeta_n) \geq  q'(N_n)$ because $q'$ is decreasing and $\zeta_n \leq N_n$. Since $N_n$ is of the order of $N_n^*$, Eq.~\eqref{eq:qprimevar} shows that $q'(N_n)$ is of the order of $q'(N_n^*)$ which in turn is of the order of $q'(x_n^*)\sim t_n^*$.
\end{remark}

\begin{proof}[Proof of Lemma~\ref{lem:crit-medium}]
	We check first that $t_n^*< \eta_n$. With $C\geq 1$ as in Lemma~\ref{lem:critsequences}, we have 
	\be
		t_n^* \sim q'(x_n^*) = \frac{N_n^* - x_n^*}{n\sigma^2} \leq (1- C^{-1}) \frac{N_n^*}{n\sigma^2}
	\ee
	so 
	\be \label{eq:tstareta}
		t_n^*/\eta_n \leq (1+o(1))(1-C^{-1}) N_n^*/N_n \leq (1+o(1))\frac{1- C^{-1}}{1+\eps}
	\ee
	 is bounded away from $1$. As $t \to 0$ at fixed $n$, $\Psi'_n(t) \sim \zeta(t) \to  \infty$, and as $n\to \infty$, $\Psi'_n(\eta_n)\to \infty$.  Furthermore
	\be \label{eq:crin}
	\begin{aligned}
		\Psi'_n(t_n^*)  & = - (N_n -\zeta_n^*) + (1+o(1)) n \sigma^2 q'(\zeta_n^*)\\
			& = - \bigl(N_n - (1+o(1)) x_n^*\bigr) + (1+o(1)) n \sigma^2 q'(x_n^*) \\
			& = - \bigl(N_n - (1+o(1)) x_n^*\bigr) + (1+o(1)) \bigl(N_n^*- x_n^*\bigr) \\
			& = - (N_n - N_n^*) + o (N_n^*) \to - \infty. 
	\end{aligned} 
	\ee
	The intermediate value theorem proves the existence of a zero $t_n \in (0,t_n^*)$ and another zero $t'_n \in (t_n^*,\eta_n)$.
	 
	 Suppose by contradiction that $t_n/ t_n^*\to 1$. Then the identities $t_n = q'(\zeta_n)$, $t^*_n = q'(\zeta_n^*)$ and Eq.~\eqref{eq:qprimevar} imply $\zeta_n \sim \zeta_n^*\sim x_n^*$ and an estimate analogous to~\eqref{eq:crin} shows $\Psi'_n(t_n)\to -\infty$, in contradiction with $\Psi'_n(t_n)=0$. It follows that $t_n/t^*_n$ and $\zeta_n / x_n^*$ are  bounded away from $1$. In addition, $t_n \leq t_n^* = O(N_n^*/n)$ and
\be
	N_n - \zeta_n \sim n\sigma^2 t_n = O(N_n^*). 
\ee
For the lower bound of $t'_n$,  
	we use $t'_n \geq t_n^* \sim q'(x_n^*)$ and Assumption~\ref{ass:convexity} to get $\zeta'_n \leq (1+o(1)) x_n^*$ where $q'(\zeta'_n) = t'_n$. Since $t'_n = O(\eta_n) \to 0$, $\Psi'_n(t'_n)=0$ together with~\eqref{eq:Psinprime} yields 
	\be 
	 t'_n \sim \frac{N_n - \zeta'_n}{n \sigma^2} \sim \eta_n \Bigl( 1 - \frac{\zeta_n'}{N_n} \Bigr) \geq \eta_n\Bigl( 1-\frac{N_n^*}{N_n} \Bigr)
	\ee 
	and $N_n \geq (1+\eps) N_n^*$ implies $\liminf t'_n /\eta_n >0$. Notice that, in view of~\eqref{eq:tstareta}, we have a fortiori $\liminf t'_n /t_n^* >0$.
	
	We have actually shown that for every inflection point $t_n^*$, $t/t_n^*\leq 1$ stays bounded away from $1$. In particular, in case of non-uniqueness of $t_n^*$ we may choose $t_n^*$ as the \emph{smallest} inflection point of $\Psi_n$. Then $\Psi'_n$ is strictly increasing on $(0,t_n^*)$, consequently the zero $t_n$ is unique. A similar argument shows that $t'_n$ is unique. 
\end{proof} 

\noindent When $\liminf_{n\to \infty} N_n/n>0$, the sequence $\eta_n$ is either no longer defined or it does not converge to zero. The previous lemma is modified as follows. 

\begin{lemma} \label{lem:crit-large}
	Assume $N_n\to \infty$ with $\liminf N_n/n >0$. Then there exists $\delta_0>0$ such that for large $n$, $\Psi_n$ has exactly one critical point $t_n$ in $(0,\delta_0)$. The critical point lies in $(0,t_n^*)$ and it satisfies $0 \leq N_n -\zeta(t_n) =O(N_n^*)$.
\end{lemma}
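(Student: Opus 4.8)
The plan is to mirror the proof of Lemma~\ref{lem:crit-medium}, which is in fact easier here: since $\liminf N_n/n>0$, the ``second'' critical point $t'_n$ is no longer trapped below a vanishing cutoff $\eta_n$, and instead it will be pushed out of a \emph{fixed} interval $(0,\delta_0)$. First I would choose $\delta_0>0$, independent of $n$, small enough that on $(0,\delta_0]$: (i) $\varphi$ is defined and smooth (Theorem~\ref{thm:bval}(c)) and $\xi(t)=(q')^{-1}(t)$ is defined; (ii) $\Re\varphi''(t)>\tfrac12\sigma^2$, so $\Re\varphi'$ is strictly increasing there (this in particular furnishes the $\delta$ of Lemma~\ref{lem:inflection}); (iii) writing $c:=\liminf_n N_n/n\in(0,\infty]$, and only if $c<\infty$, $\Re\varphi'(t)-\mu<c/2$, which is possible because $\Re\varphi'(t)\to\mu$ as $t\searrow 0$. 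Throughout, let $t_n^*$ be the \emph{smallest} inflection point of $\Psi_n$; by Lemma~\ref{lem:inflection} and Remark~\ref{rem:tnstar} we have $t_n^*\sim q'(x_n^*)\to 0$, so $t_n^*<\delta_0$ for large $n$ and $\Psi_n$ is concave on $(0,t_n^*)$.

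On $(0,t_n^*)$ the derivative $\Psi'_n$ is then strictly decreasing, and I would locate the critical point by the intermediate value theorem. As $t\searrow 0$ one has $\Psi'_n(t)=n\,\Re\varphi'(t)-m+\xi(t)\to+\infty$, because $\xi(t)\to\infty$ while $n\,\Re\varphi'(t)-m\to-N_n$ stays finite (cf.~\eqref{eq:Psinprime}). At $t_n^*$, arguing exactly as for~\eqref{eq:crin} in the proof of Lemma~\ref{lem:crit-medium}---the computation only uses $\xi(t_n^*)\sim x_n^*$, valid for every inflection point by Lemma~\ref{lem:inflection}---one gets $\Psi'_n(t_n^*)=-(N_n-N_n^*)+o(N_n^*)\to-\infty$, since $N_n^*=O(n^{1/(2-\alpha)})=o(n)=o(N_n)$. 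Hence $\Psi'_n$ has exactly one zero $t_n\in(0,t_n^*)$.

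Next I would rule out any further critical point in $[t_n^*,\delta_0)$ by showing $\Psi'_n<0$ there: for such $t$, monotonicity of $\xi$ and Lemma~\ref{lem:critsequences} give $\xi(t)\le\xi(t_n^*)\sim x_n^*=o(n)$, while the choice of $\delta_0$ gives $n(\Re\varphi'(t)-\mu)\le(1-\varepsilon)N_n$ for some $\varepsilon>0$ and all large $n$ (using $N_n\ge(1-o(1))cn$ when $c<\infty$, resp.\ $N_n/n\to\infty$ when $c=\infty$), so $\Psi'_n(t)=n(\Re\varphi'(t)-\mu)+\xi(t)-N_n\le-\varepsilon N_n+o(n)<0$. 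Thus $t_n$ is the unique critical point of $\Psi_n$ in $(0,\delta_0)$ and it lies in $(0,t_n^*)$. For the size estimate I would use $\Psi'_n(t_n)=0$ to write $N_n-\xi(t_n)=n(\Re\varphi'(t_n)-\mu)\ge 0$ (since $\Re\varphi'$ increases from $\mu$ on $(0,\delta_0]$), and then, since $t_n\le t_n^*\to 0$, the Taylor expansion in~\eqref{eq:Psinprime} gives $N_n-\xi(t_n)=n\sigma^2 t_n(1+o(1))\le n\sigma^2 t_n^*(1+o(1))\sim n\sigma^2 q'(x_n^*)=N_n^*-x_n^*=O(N_n^*)$, using~\eqref{eq:xnstar}.

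The step I expect to be the main obstacle is making the cutoff $\delta_0$ genuinely uniform in $n$: this is exactly where $\liminf N_n/n>0$ is indispensable, as it is what lets the $-N_n$ term dominate the bounded-order term $n(\Re\varphi'(t)-\mu)$ on all of $[t_n^*,\delta_0)$, while the estimate $x_n^*=o(n)$ from Lemma~\ref{lem:critsequences} is what makes the $\xi(t)$ contribution negligible there. Everything else is a routine re-run of the arguments already developed for Lemmas~\ref{lem:inflection} and~\ref{lem:crit-medium}.
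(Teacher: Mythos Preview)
Your proof is correct and follows essentially the same approach as the paper's: choose a fixed $\delta_0$ so that $\Re\varphi'(t)-\mu<\tfrac12\liminf(N_n/n)$ on $(0,\delta_0]$, reuse the argument of Lemma~\ref{lem:crit-medium} for the existence and uniqueness of $t_n\in(0,t_n^*)$ together with the bound $N_n-\xi(t_n)=O(N_n^*)$, and then show $\Psi'_n<0$ on $[t_n^*,\delta_0)$ so there is no second critical point. The paper's write-up is terser---it only evaluates $\Psi'_n(\delta_0)$ explicitly and leaves the extension to the whole interval implicit---but your more detailed version is the same argument spelled out.
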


\begin{proof} 
	The existence and uniqueness of a critical point in $(0,t_n^*)$ as well as the properties of $\zeta(t_n)$ are proven as in the previous lemma. 
	Fix $\delta_0>0$ such that $\varphi'(\delta_0) \leq \mu + \frac{1}{2} \liminf (N_n/n) =:\mu + \eps /2$. Then 
	\be
		\Psi_n'(\delta_0) = - n \Bigl(\mu+ \frac{N_n}{n} - \varphi'(\delta_0)\Bigr) + \zeta(\delta_0) \leq - n (\frac{\eps}{2}+o(1)\Bigr) + \zeta(\delta_0)\to - \infty.
	\ee
	It follows that $\Psi'_n<0$ on $(t_n^*,\delta_0)$. 
\end{proof} 

\subsection{Hessians}

Let $t_n\in (0,t_n^*)$ be the critical point of $\Psi_n(t)$, and $\zeta_n = \zeta(t_n)$. Thus $(t_n,\zeta_n)$ is a critical point of $\Phi_n(t,\zeta)$. Lemma~\ref{lem:Fn} shows 
\be \label{eq:psinsec}
	\Psi''_n(t_n) = - \frac{\det \mathrm{Hess}\, \Phi_n(t_n,\zeta_n)}{|q''(\zeta_n)|} = - \frac{1- n \Re \varphi''(t_n) |q''(\zeta_n)|}{|q''(\zeta_n)|}. 
\ee

\begin{lemma} \label{lem:hessian}
	Assume $N_n\to \infty$ with $\liminf_{n\to \infty} (N_n / N_n^*)>1$ and let $(t_n,\zeta_n)$ be the unique critical point of $\Phi_n$ in $(0,t_n^*)\times (a,\infty)$. 
	\begin{enumerate}
		\item [(a)] If $N_n\gg N_n^*$, then $\det\mathrm{Hess}\, \Phi_n(t_n,\zeta_n)\to - 1$. 
		\item [(b)] If $N_n = O(N_n^*)$, then $\det\mathrm{Hess}\, \Phi_n(t_n,\zeta_n)= - (1 - n \sigma^2 |q''(\zeta_n)|)+o(1)$ and it stays bounded away from zero. 
	\end{enumerate} 
\end{lemma}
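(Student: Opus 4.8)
The starting point is the explicit form of the Hessian recalled in Section~\ref{sec:strategy}: $\mathrm{Hess}\,\Phi_n(t,\xi)=\bigl(\begin{smallmatrix} n\,\Re\varphi''(t) & 1\\ 1 & -q''(\xi)\end{smallmatrix}\bigr)$, so that (since $q''<0$ on $(a,\infty)$)
$$\det\mathrm{Hess}\,\Phi_n(t_n,\xi_n)=n\,\Re\varphi''(t_n)\,|q''(\xi_n)|-1 .$$
Everything then reduces to estimating the single product $n\,\Re\varphi''(t_n)\,|q''(\xi_n)|$. I would record two facts to be used throughout. First, $t_n\to0$: indeed $t_n\in(0,t_n^*)$ and $t_n^*\sim q'(x_n^*)\to0$ by Lemma~\ref{lem:inflection} and \eqref{eq:qprimetozero}; since $\varphi$ is smooth at $0$ with real cumulants (Theorem~\ref{thm:bval}(c)), differentiating its Taylor expansion gives $\varphi''(t)=\sigma^2+\kappa_3 t+O(t^2)$, hence $\Re\varphi''(t_n)=\sigma^2+O(t_n)$, in particular $\Re\varphi''(t_n)$ is bounded. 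Second, the normalization $n\sigma^2|q''(x_n^*)|=1$ from \eqref{eq:xnstar} lets me rewrite $n\sigma^2|q''(\xi_n)|=|q''(\xi_n)|/|q''(x_n^*)|$, and control the right-hand side with the monotonicity estimate \eqref{eq:qsecvar} once the position of $\xi_n$ relative to $x_n^*$ is known (recall $|q''|$ is strictly decreasing on $(a,\infty)$ because $q^{(3)}>0$).

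For part (a), $N_n\gg N_n^*$, I would invoke Lemma~\ref{lem:crit-medium} (or Lemma~\ref{lem:crit-large} if $\liminf N_n/n>0$) to get $\xi_n=N_n+O(N_n^*)\sim N_n$. Since $N_n\gg N_n^*\ge(1+\delta)x_n^*$ by Lemma~\ref{lem:critsequences}, this forces $\xi_n/x_n^*\to\infty$, so by \eqref{eq:qsecvar} the ratio $|q''(\xi_n)|/|q''(x_n^*)|\to0$, i.e.\ $n\sigma^2|q''(\xi_n)|\to0$. Multiplying by the bounded factor $\Re\varphi''(t_n)$ gives $n\,\Re\varphi''(t_n)\,|q''(\xi_n)|\to0$, hence $\det\mathrm{Hess}\,\Phi_n(t_n,\xi_n)\to-1$.

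For part (b), $N_n=O(N_n^*)$, I would use Remark~\ref{rem:tn} (together with $t_n^*=O(N_n^*/n)$ from Remark~\ref{rem:tnstar}) to get $t_n=O(N_n^*/n)$, so $\Re\varphi''(t_n)=\sigma^2+O(N_n^*/n)$, while Lemma~\ref{lem:crit-medium} gives $\liminf\xi_n/x_n^*>1$, hence $\xi_n>x_n^*$ for large $n$ and $n|q''(\xi_n)|<n|q''(x_n^*)|=\sigma^{-2}$, i.e.\ $n|q''(\xi_n)|=O(1)$. Multiplying, $\det\mathrm{Hess}\,\Phi_n(t_n,\xi_n)=(\sigma^2+O(N_n^*/n))\,n|q''(\xi_n)|-1=n\sigma^2|q''(\xi_n)|-1+O(N_n^*/n)=-(1-n\sigma^2|q''(\xi_n)|)+o(1)$, using $N_n^*=o(n)$ from Lemma~\ref{lem:critsequences}. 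To see it stays bounded away from zero, I would use $\liminf\xi_n/x_n^*>1$ once more: fix $\delta>0$ with $\xi_n\ge(1+\delta)x_n^*$ eventually; then \eqref{eq:qsecvar} yields $n\sigma^2|q''(\xi_n)|=|q''(\xi_n)|/|q''(x_n^*)|\le(1+\delta)^{-c}<1$ for a constant $c>0$, so $1-n\sigma^2|q''(\xi_n)|\ge1-(1+\delta)^{-c}>0$.

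The argument is essentially bookkeeping, and I do not expect any genuine obstacle. The one point requiring a little care is that the error incurred when replacing $\Re\varphi''(t_n)$ by $\sigma^2$, namely $O(t_n)\cdot n|q''(\xi_n)|$, is truly $o(1)$; this needs the two bounds $t_n=O(N_n^*/n)$ and $n|q''(\xi_n)|=O(1)$ to hold simultaneously, both of which are already supplied by Remark~\ref{rem:tn} and the inclusion $\xi_n>x_n^*$. A secondary nuisance is that the inflection point $t_n^*$ of $\Psi_n$ need not be unique, but Lemmas~\ref{lem:crit-medium} and~\ref{lem:crit-large} were set up precisely so that the estimates on $t_n$ and $\xi_n$ hold for every inflection point, so nothing extra is required here.
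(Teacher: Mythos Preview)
Your proposal is correct and follows essentially the same approach as the paper: compute the determinant as $n\,\Re\varphi''(t_n)\,|q''(\xi_n)|-1$, replace $\Re\varphi''(t_n)$ by $\sigma^2+O(t_n)$, and control $n\sigma^2|q''(\xi_n)|$ via the location of $\xi_n$ relative to $x_n^*$. The only noticeable difference is in part~(a): the paper bounds $q''(\xi_n)$ using the secant inequality for the convex function $q'$, namely $0\ge q''(\xi_n)\ge\frac{q'(\xi_n)-q'(x_n^*)}{\xi_n-x_n^*}$, which yields the quantitative rate $n\sigma^2|q''(\xi_n)|=O(N_n^*/N_n)$, whereas you invoke the power-law comparison~\eqref{eq:qsecvar} to get only $n\sigma^2|q''(\xi_n)|\to0$; since the lemma asks only for the limit, your route suffices.
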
 

\begin{proof} 
	(a) If $N_n\gg N_n^*$, then by Lemmas~\ref{lem:crit-medium} and~\ref{lem:crit-large}, we have $\zeta_n = N_n + O(N_n^*) \sim N_n$, hence in particular $\zeta_n \gg N_n^*\geq x_n^*$. 
	Exploiting the monotonicity and the convexity of $q'$, we have 
	\be \label{eq:qsecco}
		0 \geq q''(\zeta_n) \geq \frac{q'(\zeta_n) - q'(x_n^*)}{\zeta_n - x_n^*} 
			 = \frac{O(N_n^*/(n\sigma^2))}{N_n (1 +o(1))} = - \frac{1}{n\sigma^2} O\Bigl( \frac{N_n^*}{N_n}\Bigr)
	\ee
	from which we get 
	\be
		\det \mathrm{Hess}\, \Phi_n(t_n,\zeta_n) = - 1 + n \sigma^2 (1+ O(t_n)) |q''(\zeta_n)| = - 1 + O\Bigl(\frac{N_n^*}{N_n}\Bigr) \to -1. 
	\ee
	(b) If $N_n =O(N_n^*)$: By Lemma~\ref{lem:crit-medium}, $\zeta_n\geq (1+\eps) x_n^*$ for some $\eps>0$. 	
	Consequently $|q''(\zeta_n)|\leq (1- \delta) |q''(x_n^*)| = (1-\delta) / (n\sigma^2)$ for some $\delta>0$ and large $n$, from which we deduce that 
	\be
		1- n \Re \varphi''(t_n) |q''(\zeta_n)|\geq 1 - (1+ O(t_n))(1- \delta) = 1- \delta + O(t_n), 
	\ee
	in particular the expression stays bounded away from zero. We also have 
	\be
		\det \mathrm{Hess}\, \Phi_n(t_n,\zeta_n) =  - (1 - n \sigma^2 |q ''(\zeta_n)|) + o (t_n) n \sigma^2 |q''(\zeta_n)|
	\ee
	Since $\zeta_n > x_n^*$, the estimate~\eqref{eq:qsecco} still holds true and
	\be
		t_n n \sigma^2 |q''(\zeta_n)| =t_n O\Bigl( \frac{N_n^*}{N_n}\Bigr) = O(t_n) \to 0.
	\ee
\end{proof} 

\subsection{Gaussian approximation} \label{sec:gaussian}

Next we address the Gaussian approximation for the evaluation of $H_n$. Because of Theorem~\ref{thm:imsmall}, we need not deal with a bivariate integral and instead may use 
\be \label{eq:hnpsi}
	\frac{1}{\pi}\int_0^{\eps_n}\e{ n\Re \varphi(t) - mt}   \Im G(\e{t}) \dd t \sim \frac{1}{\sqrt{2\pi}}  \int_0^{\eps_n} \sqrt{|\psi''(t)|} \e{\Psi_n(t)} \dd t
\ee 
as $n\to \infty$ and $\eps_n \searrow 0$. Remember that $\Psi_n(t_n) = \Phi_n(t_n,\zeta_n)$ and from Eq.~\eqref{eq:psinsec} and Lemma~\ref{lem:hessian}, 
\be \label{eq:psiratios}
	\frac{\psi_n''(t_n)}{\Psi_n''(t_n)} = \frac{1}{1 - n \sigma^2 |q''(\zeta_n)|}
\ee
with a denominator bounded away from zero. 
The following technical lemma helps estimate the prefactor $\sqrt{|\psi''(t)|}$. Set 
\be \label{eq:rndef}
	R_n(t):= \Psi_n(t) + \log \sqrt{|\psi''(t)|}. 
\ee

\begin{lemma} \label{lem:prefactor}
	Let $N_n\to \infty$ with $\liminf (N_n/N_n^*)>1$ and $t_n \in (0,t_n^*)$ the zero of $\Psi'_n(t)$ from Lemma~\ref{lem:crit-medium}. Then $R'_n$ has at least one zero $s_n \in (0,t_n)$. Moreover there exists a sequence $\delta_n\searrow 0$ such that every such zero lies in $((1-\delta_n)t_n,t_n)$. 
\end{lemma}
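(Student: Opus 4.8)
The plan is to differentiate $R_n$ from~\eqref{eq:rndef} directly and to exploit that the prefactor correction has size $1/t$. Since $\psi''(t)=1/q''(\xi(t))<0$ and $\psi^{(3)}(t)=-q^{(3)}(\xi(t))/q''(\xi(t))^{3}>0$, we have $R_n'(t)=\Psi_n'(t)-h_n(t)$ with $h_n(t):=\tfrac12\,q^{(3)}(\xi(t))/q''(\xi(t))^{2}>0$. Assumptions~\ref{ass:convexity}(iii)--(iv) give $q^{(3)}(\xi)\asymp|q''(\xi)|/\xi$ and $\xi|q''(\xi)|\asymp q'(\xi)$, whence
\[
  \frac{c}{t}\le h_n(t)\le\frac{C}{t}
\]
for constants $0<c\le C$ depending only on the $c_i$. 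As $\Psi_n'(t_n)=0$ this already gives $R_n'(t_n)=-h_n(t_n)\le-c/t_n<0$. Hence, by the intermediate value theorem, it suffices to produce $\delta_n\to0$ (which may then be replaced by its running supremum to make it monotone) with $R_n'>0$ on $(0,(1-\delta_n)t_n]$: this simultaneously furnishes a zero $s_n\in((1-\delta_n)t_n,t_n)$ and shows that no zero of $R_n'$ lies in $(0,(1-\delta_n)t_n]$. Because $R_n'(t)\ge\Psi_n'(t)-C/t=\bigl(t\Psi_n'(t)-C\bigr)/t$, everything reduces to proving $v(t):=t\Psi_n'(t)>C$ on $(0,(1-\delta_n)t_n]$, and I would split this interval at $t_1:=q'(2N_n)$.

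On $(0,t_1]$ we have $\xi(t)\ge2N_n$, so by~\eqref{eq:Psinprime} and $\varphi'>\mu$ near $0$ we get $\Psi_n'(t)=\xi(t)-N_n+n(\varphi'(t)-\mu)\ge\tfrac12\xi(t)$, hence $v(t)\ge\tfrac12 t\xi(t)=\tfrac12\xi(t)q'(\xi(t))>C$ for all large $n$ by Assumption~\ref{ass:convexity}(ii) (as $\xi(t)\ge2N_n\to\infty$). The interval $[t_1,(1-\delta_n)t_n]$ is non-degenerate for large $n$: using $N_n/\xi_n=O(1)$ (from $\xi_n=N_n+O(N_n^{*})$ when $N_n\gg N_n^{*}$, and $\xi_n\asymp x_n^{*}\asymp N_n^{*}\asymp N_n$ otherwise, by Lemmas~\ref{lem:crit-medium}, \ref{lem:crit-large}, \ref{lem:critsequences}) together with~\eqref{eq:qprimevar} one checks $c_\sharp\,t_n\le t_1\le2^{-c_1}t_n$. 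On this interval I would compare $\Psi_n'$ with $w(t):=\xi(t)-N_n+n\sigma^{2}t$, so that $\Psi_n'(t)=w(t)+O(nt^{2})$. The function $w$ is strictly decreasing on $(0,t_n]$, since $w'(t)=n\sigma^{2}-1/|q''(\xi(t))|<0$ because $\xi(t)\ge\xi_n>x_n^{*}$ forces $|q''(\xi(t))|<|q''(x_n^{*})|=1/(n\sigma^{2})$; moreover $w(t_n)=\Psi_n'(t_n)+O(nt_n^{2})=O(nt_n^{2})$.

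On the shrinking interval $((1-\delta_n)t_n,t_n)$ one has $\xi(u)=\xi_n(1+O(\delta_n))$ by~\eqref{eq:qprimevar}, and since $q''$ varies slowly (cf.~\eqref{eq:qsecvar}) also $|q''(\xi(u))|\sim|q''(\xi_n)|$; therefore
\[
  |w'(u)|=\frac{1-n\sigma^{2}|q''(\xi(u))|}{|q''(\xi(u))|}\gtrsim\frac{1}{|q''(\xi_n)|}\asymp\frac{\xi_n}{t_n},
\]
the first $\gtrsim$ using that $1-n\sigma^{2}|q''(\xi_n)|$ stays bounded away from $0$ by Lemma~\ref{lem:hessian} --- this is the only place the hypothesis $\liminf N_n/N_n^{*}>1$ is used. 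Integrating, $w\bigl((1-\delta_n)t_n\bigr)\gtrsim\delta_n\xi_n-O(nt_n^{2})$; since $nt_n=O(N_n^{*})=O(\xi_n)$ (hence $nt_n^{2}=O(t_n\xi_n)$) and $t_n=o(\delta_n)$, this gives $w((1-\delta_n)t_n)\gtrsim\delta_n\xi_n$, and by monotonicity $\Psi_n'(t)\ge w((1-\delta_n)t_n)-O(nt_n^{2})\gtrsim\delta_n\xi_n$ for every $t\le(1-\delta_n)t_n$. Consequently, for $t\in[t_1,(1-\delta_n)t_n]$,
\[
  v(t)=t\,\Psi_n'(t)\ge t_1\,\Psi_n'(t)\gtrsim t_n\,\delta_n\,\xi_n=\delta_n\,\xi_n q'(\xi_n),
\]
which tends to $\infty$ provided $\delta_n\gg(\xi_n q'(\xi_n))^{-1}$.

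To conclude I would take $\delta_n:=\sqrt{\max\{t_n,\,(\xi_n q'(\xi_n))^{-1}\}}$, which tends to $0$ (as $t_n\to0$ and $\xi_n q'(\xi_n)\to\infty$ by Assumption~\ref{ass:convexity}(ii)) while dominating both $t_n$ and $(\xi_n q'(\xi_n))^{-1}$, so that the two estimates apply for all large $n$ and yield $v>C$, hence $R_n'>0$, on $(0,(1-\delta_n)t_n]$. Together with $R_n'(t_n)<0$ this proves the lemma. The step I expect to be the main obstacle is the uniform lower bound $|w'(u)|\gtrsim\xi_n/t_n$ on $((1-\delta_n)t_n,t_n)$: it requires both that $1-n\sigma^{2}|q''(\xi_n)|$ remain bounded away from $0$ (Lemma~\ref{lem:hessian}, hence the assumption on $N_n/N_n^{*}$) and that $q''$ vary slowly enough on that neighbourhood that $|q''(\xi(u))|\sim|q''(\xi_n)|$ throughout; the remainder is bookkeeping of the $O(nt^{2})$ Taylor remainder from $\varphi$ and of the comparison of $t_1$ with $t_n$.
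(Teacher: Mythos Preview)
Your proof is correct and takes a genuinely different route from the paper's argument. The paper proves existence by the same intermediate-value step ($R_n'(t)\to+\infty$ as $t\searrow 0$, $R_n'(t_n)<0$), but for the localisation it does \emph{not} try to control $R_n'$ directly. Instead it passes to the dual variable: for any zero $s_n$ it sets $y_n:=\psi'(s_n)+\tfrac12\psi^{(3)}(s_n)/\psi''(s_n)\sim\psi'(s_n)=\xi(s_n)$, reads off from $R_n'(s_n)=0$ that $(N_n-y_n)/(n\sigma^2)\sim q'(y_n)$, recognises this as the critical-point equation for $f_n$, and then uses the sharpness of the variational problem (that the derivative of $q'(y)-(N_n-y)/(n\sigma^2)$ is bounded away from zero above $x_n^{*}$) to force $y_n\sim x_n$, whence $s_n\sim q'(x_n)\sim t_n$.

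Your argument stays entirely in the $t$-variable and is more constructive: you produce an explicit $\delta_n$ and show $t\Psi_n'(t)>C$ on all of $(0,(1-\delta_n)t_n]$ by splitting at $t_1=q'(2N_n)$ and, on the inner piece, integrating a lower bound on $|w'|$ obtained from Lemma~\ref{lem:hessian}. This buys you a concrete rate $\delta_n=\sqrt{\max\{t_n,(\xi_n q'(\xi_n))^{-1}\}}$, whereas the paper only gets $s_n\sim t_n$; on the other hand the paper's detour through $y_n$ makes transparent that the location of $s_n$ is governed by the same variational problem as everything else, a structural point that your more hands-on estimates obscure. The step you flagged as delicate---that $1-n\sigma^2|q''(\xi(u))|$ stays bounded away from zero on $((1-\delta_n)t_n,t_n)$---is fine: since $\xi(u)\ge\xi_n$ and $|q''|$ is decreasing, $n\sigma^2|q''(\xi(u))|\le n\sigma^2|q''(\xi_n)|$, and the latter is bounded away from $1$ by Lemma~\ref{lem:hessian}.
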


\begin{proof}
	As $t\searrow 0$ at fixed $n$,  using Lemma~\ref{lem:psit}, we have
	\begin{align} 
		R'_n(t) = \Psi'_n(t) + \frac{1}{2} \frac{\psi'''(t)}{\psi''(t)} & = n \bigl(\Re \varphi'(t) - \mu\bigr) - N_n  + \psi(t) + \frac{1}{2} \frac{\psi'''(t)}{\psi''(t)} \notag \\ 
			& = n (\sigma^2 t + O(t^2)) - N_n + \psi'(t) + O(1/t) \notag \\ 
			& = - N_n + o(1) + (1+o(1)) \psi'(t) \to \infty, \label{eq:rninit}
	\end{align}
	hence $t\mapsto R_n(t)$ is initially increasing. At $t=t_n$ we have 
	\be 
		R'_n(t_n) = \Psi'_n(t_n) +  \frac{1}{2} \frac{\psi'''(t_n)}{\psi''(t_n)}
			=  \frac{1}{2} \frac{\psi'''(t_n)}{\psi''(t_n)} <0. 
	\ee
	The intermediate value theorem guarantees the existence of a zero $s_n$ of $R'_n$. Set 
	\be \label{eq:yn}
		y_n:= \psi'(s_n) + \frac{1}{2} \frac{\psi'''(s_n)}{\psi''(s_n)}. 
	\ee
	In view of Lemma~\ref{lem:psit}, we have $y_n \sim \psi'(s_n)$ and by Eqs.~\eqref{eq:qprimevar} and~\eqref{eq:psidual},  $q'(y_n) \sim s_n$. 
	From $s_n \leq t_n \leq t_n^*$ and Lemma~\ref{lem:crit-magnitudes} we get that $y_n$ is larger than $x_n^*$ and actually bounded away from it. 
	By the definition of $s_n$, 
	\be 
		0 = n \bigl( \Re \varphi'(s_n) - \mu \bigr) - N_n + y_n = n \sigma^2 s_n(1 + O(s_n)) - [N_n - y_n]
	\ee
	hence 
	\be 
		\frac{N_n - y_n}{n\sigma^2} \sim s_n \sim q'(y_n). 
	\ee
	Let $x_n \in (x_n^*,N_n)$ be the solution of $q'(x_n) = (N_n- x_n)/(n \sigma^2)$. Now
	\be 
		\frac{\dd}{\dd y}\Bigl( q'(y) - \frac{N_n - y}{n\sigma^2} \Bigr)
			=q''(y) + \frac{1}{n \sigma^2} = \frac{1- n \sigma^2|q''(y)|}{n \sigma^2} 
	\ee
	and $n \sigma^2 |q''(y)|$ stays bounded away from $1$ when $y\geq (1+\delta)x_n^*$, we find that for some $c>0$, we have 
	\be
		\Bigl|q'(y_n)  - \frac{N_n - y}{n\sigma^2} \Bigr| \geq 
		\frac{ c |y_n - x_n|}{n \sigma^2}
	\ee
	hence $y_n - x_n = o(N_n - y_n) = o(N_n)$. If $N_n \gg N_n^*$, Lemma~\ref{lem:crit-magnitudes} says $x_n \sim N_n$ and we deduce $y_n - x_n =o(x_n)$. If $N_n = O(N_n^*)$, we use $x_n \geq x_n^*$ in conjunction with 
	Lemma~\ref{lem:critsequences} and find $y_n - x_n = o(N_n^*)= o(x_n^*) = o(x_n)$. 
	
	Thus we have checked that $y_n \sim x_n$, which in turn yields $s_n \sim q'(x_n)$. An entirely similar argument shows $t_n \sim q'(x_n)$, so we must have $s_n \sim t_n$. This holds for every zero in $(0,t_n)$, in particular the smallest one, and the lemma follows. 
\end{proof}

\begin{lemma} \label{lem:gaussian}  
	Let $N_n\to \infty$ with $\liminf (N_n/N_n^*)>1$ and $N_n = O(N_n^*)$. Then 
	$$\int_0^{t'_n} \sqrt{|\psi''(t)|} \e{\Psi_n(t)} \dd t 
		\sim \sqrt{\frac{2\pi}{1- n \sigma^2 q''(\zeta_n)}}\, \e{\Psi_n(t_n)}. $$
\end{lemma}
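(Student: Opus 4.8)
The plan is to regard $\int_0^{t_n'}\sqrt{|\psi''(t)|}\,\e{\Psi_n(t)}\,\dd t=\int_0^{t_n'}\e{R_n(t)}\,\dd t$, with $R_n(t)=\Psi_n(t)+\tfrac12\log|\psi''(t)|$ as in~\eqref{eq:rndef}, as a one–dimensional Laplace integral and to localise it at the maximiser of $R_n$. First I would pin down that maximiser. By Lemma~\ref{lem:prefactor}, $R_n'>0$ near the origin and the largest zero $\tilde s_n$ of $R_n'$ in $(0,t_n)$ satisfies $\tilde s_n\in((1-\delta_n)t_n,t_n]$ with $\delta_n\searrow0$; moreover $R_n'=\Psi_n'+\tfrac12\psi^{(3)}/\psi''<0$ on all of $(\tilde s_n,t_n')$, because on $(t_n,t_n')$ one has $\Psi_n'<0$ and $\psi^{(3)}/\psi''<0$, while on $(\tilde s_n,t_n)$ one has $0\le\Psi_n'\le\Psi_n'(\tilde s_n)=-\tfrac12\psi^{(3)}(\tilde s_n)/\psi''(\tilde s_n)=O(1/t_n)$ by Lemma~\ref{lem:psit}(c). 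Hence $R_n$ increases on $(0,(1-\delta_n)t_n)$, attains its maximum over $[0,t_n']$ at $\tilde s_n$, and decreases on $(\tilde s_n,t_n')$. Since $R_n'=O(1/t_n)$ on $(\tilde s_n,t_n)$ and $t_n-\tilde s_n\le\delta_n t_n$, it follows that $R_n(\tilde s_n)=R_n(t_n)+O(\delta_n)$ and $\Psi_n(\tilde s_n)=\Psi_n(t_n)+o(1)$, so $\e{R_n(\tilde s_n)}=(1+o(1))\sqrt{|\psi''(t_n)|}\,\e{\Psi_n(t_n)}$. Throughout I will use the magnitudes implied by $N_n=O(N_n^*)$ and $\liminf N_n/N_n^*>1$: $t_n\asymp t_n^*\asymp N_n^*/n$ (Remark~\ref{rem:tn}); $\xi_n\ge(1+\eps)x_n^*$, so that $1-n\sigma^2|q''(\xi_n)|$ is bounded away from $0$ and from $1$ (Lemma~\ref{lem:hessian}(b)), whence $|\psi''(t_n)|=1/|q''(\xi_n)|\asymp n$, $|\Psi_n''(t_n)|\asymp n$, and, by Lemma~\ref{lem:psit}(c), $|\Psi_n^{(3)}|\le n|\varphi^{(3)}|+|\psi^{(3)}|=O(n^2/N_n^*)$ on a fixed neighbourhood $\{t\asymp t_n\}$.

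Next I would run the Gaussian approximation on a window $[t_n-w_n,t_n+w_n]$. Because $N_n^*\gg\sqrt n$, a scale $w_n\searrow0$ exists with $n^{-1/2}\ll w_n\ll(N_n^*/n^2)^{1/3}$; then $nw_n^2\to\infty$, $|\Psi_n^{(3)}|w_n^3\to0$, and $w_n=o(t_n)$. Evaluating $R_n'$ at the endpoints with $|\Psi_n''|\asymp n$ and $|\psi^{(3)}/\psi''|=O(1/t_n)=o(nw_n)$ gives $R_n'(t_n-w_n)>0>R_n'(t_n+w_n)$, so $\tilde s_n$ lies in the window; this same estimate shows $t_n-\tilde s_n=O(1/(nt_n))$, hence $\delta_n t_n=O(1/(nt_n))=o(w_n)$, which will be needed for the left tail. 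On the window $\sqrt{|\psi''(t)|}=(1+o(1))\sqrt{|\psi''(t_n)|}$, since $|\psi''(t)-\psi''(t_n)|\le\sup|\psi^{(3)}|\,w_n=O(|\psi''(t_n)|\,w_n/t_n)=o(|\psi''(t_n)|)$, and, by the concavity of $\Psi_n$ on $(0,t_n^*)$, $\Psi_n'(t_n)=0$, and the cubic remainder bound, $\Psi_n(t_n+u)=\Psi_n(t_n)+\tfrac12\Psi_n''(t_n)u^2+o(1)$ uniformly in $|u|\le w_n$. Extending the resulting Gaussian integral to $\R$ (the omitted tail is exponentially small relative to $\sqrt{2\pi/|\Psi_n''(t_n)|}$ because $|\Psi_n''(t_n)|w_n^2\to\infty$), the central piece equals $(1+o(1))\sqrt{|\psi''(t_n)|}\,\e{\Psi_n(t_n)}\sqrt{2\pi/|\Psi_n''(t_n)|}$, which by~\eqref{eq:psiratios} is $(1+o(1))\sqrt{2\pi/(1-n\sigma^2|q''(\xi_n)|)}\,\e{\Psi_n(t_n)}$ --- exactly the asserted right-hand side.

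It remains to show both tails are $o$ of this. On the right tail $(t_n+w_n,t_n')$, $R_n$ is strictly decreasing, so the integral is $\le t_n'\,\e{R_n(t_n+w_n)}$; writing $R_n(t_n+w_n)-R_n(t_n)=\int_{t_n}^{t_n+w_n}\Psi_n'+\int_{t_n}^{t_n+w_n}\tfrac12\psi^{(3)}/\psi''$, the first integral is $\le-c\,nw_n^2$ by concavity of $\Psi_n$ and $|\Psi_n''|\asymp n$, the second is $o(1)$, and $t_n'\le\eta_n=o(1)$, so this tail is $o(\e{R_n(t_n)}/\sqrt n)$. On the left tail $(0,t_n-w_n)$: since $\delta_n t_n=o(w_n)$, $(0,t_n-w_n)\subset(0,(1-\delta_n)t_n)$, where $R_n$ is increasing, so the integrand is dominated by $\e{R_n(t_n-w_n)}$; in particular the apparent non-integrability of $\sqrt{|\psi''(t)|}$ at $0$ is harmless, for $R_n(t)\to-\infty$ as $t\searrow0$ because $\Psi_n(t)\le\psi(t)$ for $t<\eta_n$ and, by Lemma~\ref{lem:psit}(a), $\psi(t)\to-\infty$ faster than $\log(1/t)$ while $\log|\psi''(t)|=O(\log(1/t))$. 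Splitting $(0,t_n-w_n)=(0,t_n/2)\cup(t_n/2,t_n-w_n)$: on the second interval $R_n$ is still increasing so $R_n\le R_n(t_n-w_n)\le R_n(t_n)-c\,nw_n^2$ (concavity of $\Psi_n$ again), giving $o(\e{R_n(t_n)}/\sqrt n)$; on $(0,t_n/2)$, $R_n\le R_n(t_n/2)$, and $\int_{t_n/2}^{3t_n/4}\Psi_n'\ge c\,N_n^*t_n\asymp(N_n^*)^2/n$ together with the $O(1)$ variation of $\tfrac12\log|\psi''|$ over $(t_n/2,t_n)$ yields $R_n(t_n/2)\le R_n(t_n)-c(N_n^*)^2/n$; since $(N_n^*)^2/n\gg\log n$ this too beats every power of $n$. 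Summing the central contribution with the two tails gives the lemma.

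The hard part is the left tail. The prefactor $\sqrt{|\psi''(t)|}$ grows polynomially as $t\downarrow0$, so $\e{\Psi_n(t)}$ cannot simply be replaced by its supremum there; and because we only assume $\liminf N_n/N_n^*>1$, the true maximiser $\tilde s_n$ of $R_n$ does not coincide with $t_n$, so one must quantify $t_n-\tilde s_n$ sharply enough ($=O(1/(nt_n))$) to keep it inside a Gaussian window of width $w_n\gg n^{-1/2}$. Both points are settled by Lemma~\ref{lem:prefactor}, by Lemma~\ref{lem:psit}(a) (super-logarithmic decay of $\psi$ near $0$, which swamps $\sqrt{|\psi''|}$), and by the curvature bound $|\Psi_n''|\asymp n$ with $(N_n^*)^2/n\to\infty$.
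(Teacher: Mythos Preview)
Your proof is correct and takes essentially the same approach as the paper: a Gaussian approximation of $\int_0^{t_n'}\e{R_n(t)}\dd t$ on a window of width $w_n$ around $t_n$ with $|\Psi_n''(t_n)|\,w_n^2\to\infty$ and negligible cubic remainder, the left tail controlled via the monotonicity of $R_n$ supplied by Lemma~\ref{lem:prefactor}, and the right tail via the concavity of $\Psi_n$ together with the monotonicity of $|\psi''|$. The only cosmetic difference is that the paper treats both tails with a single pointwise bound on the integrand and then chooses $c_n\to\infty$ so that $c_n^2\gg\log|\psi''(t_n)|$, whereas you split the left tail into $(0,t_n/2)$ and $(t_n/2,t_n-w_n)$ and quantify $t_n-\tilde s_n=O(1/(nt_n))$ explicitly; the substance is identical.
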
 

\begin{proof} 
		By Lemma~\ref{lem:inflection} and~\ref{lem:crit-medium}, $\Psi_n$ is increasing on $(0,t_n)$ and decreasing on $(t_n,t'_n)$. We use a Gaussian approximation to $\Psi_n$ around $t_n$ and adapt~\cite[Lemma 2.1]{nagaev73}. 
		First we check that the window $(t_n -\eps_n, t_n+ \eps_n)$ contributing most to the Gaussian integral fits amply into $(0,t_n')$, i.e., 
		\be \label{eq:windowsize}
			\eps_n:= \frac{1}{\sqrt{|\Psi_n''(t_n)|}} 
			 = o \bigl( \min(t_n, t'_n - t_n)\bigr). 
		\ee			
		By Lemma~\ref{lem:crit-medium}, we have $t_n \leq (1- \delta) t_n^* \leq (1-\delta) t'_n$ for some $\delta \in (0,1)$, hence $t'_n - t_n \geq \delta t_n$ and 
		\be \label{eq:tprimet}
 			\min(t_n, t'_n - t_n) \geq \delta t_n. 
 		\ee
		Eq.~\eqref{eq:psiratios} and Lemma~\ref{lem:hessian} show that $\Psi''_n(t_n)$ is of the order of $\psi''(t_n)$, $\eps_n^2$ of the order of $1/|\psi_n''(t_n)|$. By Lemma~\ref{lem:psit}, 
		$t_n^2 |\psi_n''(t_n)| \to \infty$ hence $\eps_n^2/t_n^2 \to 0$  and~\eqref{eq:windowsize} follows. 
		The same argument shows that for 
		\be 
			c_n \to \infty \quad \text{with}\quad c_n \eps_n = o(t_n)
		\ee 
		we still get $c_n \eps_n = o \bigl( \min(t_n, t'_n - t_n)\bigr)$. 
		 Second, we observe that the prefactor $\psi''(t)$ is essentially constant on the relevant window: 
		 Because of $\psi'''(u)/ \psi''(u) = O(1/u)$ (Lemma~\ref{lem:psit}), we have
 		\be
 			\frac{\psi''(t)}{\psi''(t_n)}  = \exp\Bigl( -\int_{t_n}^t \frac{\psi'''(u)}{\psi''(u)} \dd u\Bigr)  = \exp\Bigl( O(\log \frac{t}{t_n})\Bigr) 
		\ee
		and  
		\be \label{eq:relevant-secder}
			\sup\Bigl \{ \Bigl| \frac{\psi''(t)}{\psi''(t_n)} - 1\Bigr|\, \Big|\, |t- t_n|\leq c_n \eps_n \Bigr\} = O(c_n \eps_n )\to 0.
		\ee
		Third, we note that cubic corrections can be neglected: 
		\be \label{eq:third-order}
			\Psi_n(t) = \Psi_n(t_n) + \frac{1}{2}\bigl( 1 + o(1)\bigr) \Psi_n''(t_n) (t-t_n)^2
		\ee 
		uniformly in $|t-t_n| \leq c_n \eps_n$. To this aim write, with the help of~\eqref{eq:relevant-secder}, 
		\begin{align*} 
			\Psi''_n(t) &= n \bigl( \varphi''(t_n) + O(c_n \eps_n) \bigr) + \psi''_n(t_n) (1+ O(c_n \eps_n) ) \notag \\
				&= \Psi_n''(t_n) + \psi''_n(t_n) \Bigl( O\Bigl(\frac{n c_n \eps_n}{\psi_n''(t_n)}\Bigr) + O(c_n \eps_n) \Bigr)
		\end{align*}
		Now $n / \psi''(t_n) = n q''(\zeta_n) = O(1)$ by Lemma~\ref{lem:hessian} and $\psi''(t_n) = O(\Psi_n''(t_n))$ by Eq.~\eqref{eq:psiratios} and the same lemma, hence 
		\be
			\Psi''_n(t) = (1+ O(c_n \eps_n))\Psi_n''(t_n)
		\ee
	 in $|t-t_n| \leq c_n \eps_n$ and~\eqref{eq:third-order} follows.  Eqs.~\eqref{eq:relevant-secder}, \eqref{eq:third-order} and~\eqref{eq:psiratios} yield 
		\be \label{eq:gaussian-dominant}
			\int_{t_n - c_n\eps_n}^{t_n+ c_n\eps_n} \sqrt{|\psi''(t)|} \e{\Psi_n(t)} \dd t   \sim \sqrt{\frac{ 2 \pi \psi''(t_n)}{\Psi_n''(t_n)}} 
				\sim \sqrt{\frac{2\pi}{1- n \sigma^2 q''(\zeta_n)}}\, \e{\Psi_n(t_n)}.
		\ee
		Our next task is to estimate the integral on $(0,t_n- c_n\eps_n)$ and $(t_n + c_n\eps_n,t'_n)$, taking into account that the prefactor $\sqrt{|\psi''(t)|}$ goes to infinity. On both intervals we have 
		\be \label{eq:psintails}
			\Psi_n(t) \leq \Psi_n(t_n) - \frac{1}{2}(1+O(c_n \eps_n)) c_n^2. 
		\ee
		For $t\geq t_n + c_n \eps_n$ we have $ - \psi''(t) \leq - \psi''(t_n)$ hence 
		\be \label{eq:integrand-tails}
			\sqrt{|\psi''(t)|} \exp(\Psi_n(t)) \leq \exp\Bigl( \Psi_n(t_n) - \frac{1}{2}(1+o(1)) c_n^2 + \frac{1}{2} \log |\psi''(t_n)|\Bigr).
		\ee
		By Lemma~\ref{lem:prefactor} we may choose $c_n \eps_n \searrow 0$ in such a way that $R_n$ is increasing on $(0,t_n(1- c_n \eps_n))$, which yields 
		\be 
			\Psi_n(t) + \log \sqrt{|\psi''(t)|} \leq \Psi_n(t_n- c_n \eps_n) + \log \sqrt{|\psi''(t_n - c_n \eps_n)|}. 
		\ee
		This estimate, combined with Eqs.~\eqref{eq:relevant-secder} and~\eqref{eq:psintails}, shows that~\eqref{eq:integrand-tails} holds true not only in $(t_n + c_n \eps_n, t'_n)$ but also in $(0, t_n - c_n \eps_n)$.
		Next we check that we can choose $c_n\to \infty$ so that not only $c_n \eps_n = o(t_n)$ but in fact 
		\be \label{eq:gtails}
			- (1+o(1)) c_n^2 + \log |\psi''(t_n)| \to - \infty.
		\ee 
		By Lemma~\ref{lem:psit} (iii), as $t \searrow 0$, we may estimate $|\psi''(t)|$ as follows: fix $t_0>0$ and take $t\in (0,t_0)$, then 
		\begin{align}
			\log (- \psi''(t)) & = \log (- \psi''(t_0)) - \int_t^{t_0} \frac{ \psi'''(u)}{\psi''(u)} \dd u \leq \log (- \psi''(t_0)) + \int_t^{t_0} \frac{C}{u} \dd u \notag \\ 
			& = \log (- \psi''(t_0)) + C \log \frac{t_0}{t} = O(|\log t|). 
		\end{align} \label{eq:psiseclog}
		In particular 
		 $\log |\psi''(t_n)| = O(\log t_n)$. 
		On the other hand 
		\be 
			\frac{c_n^2}{|\log t_n|} = \Bigl(\frac{c_n \eps_n}{t_n}\Bigr)^2 \frac{\Psi_n''(t_n)}{\psi''(t_n)}  \frac{t_n^2 |\psi''(t_n)|}{|\log t_n|}
		\ee 
		The ratio $\Psi''_n(t_n)/\psi''(t_n)$ stays bounded away from $0$ and by Lemma~\ref{lem:psit}, 
		\be \label{eq:psisecgrowth}
			\frac{t^2 |\psi''(t)|}{|\log t|} \to \infty.
		\ee 
		 Thus we may find a function $\omega(t) \to 0$ such that $\omega(t)^2 t^2 |\psi''(t)|/|\log t|$ still goes to infinity as $t\searrow 0$, set $c_n \eps_n = t_n \omega(t_n)$, and then Eq.~\eqref{eq:gtails} holds true. The bound~\eqref{eq:integrand-tails} then shows 
		\be \label{eq:gaussian-tails}
			\int_0^{t_n - c_n \eps_n } \sqrt{|\psi''(t)|} \e{\Psi_n(t)} \dd t 
				+ \int_{t_n + c_n \eps_n }^{t'_n} \sqrt{|\psi''(t)|} \e{\Psi_n(t)} \dd t 
				= o \Bigl( \e{\Psi_n(t_n)} \Bigr).
		\ee
		Eqs.~\eqref{eq:gaussian-dominant} and~ \eqref{eq:gaussian-tails} complete the proof of the lemma. 
\end{proof} 

\begin{lemma} \label{lem:gaussian2}
	Let $N_n\to \infty$ with $N_n^*\ll N_n \ll n$. Then 
	$$\int_0^{t'_n} \sqrt{|\psi''(t)|} \e{\Psi_n(t)} \dd t 
		\sim \sqrt{2\pi}\, \e{\Psi_n(t_n)}. $$
\end{lemma}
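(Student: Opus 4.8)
The plan is to transcribe the proof of Lemma~\ref{lem:gaussian}, the only substantive change being that in the regime $N_n\gg N_n^*$ the factor $1-n\sigma^2|q''(\xi_n)|$ governing the Hessian ratio tends to $1$ rather than to a constant in $(0,1)$, so that the prefactor $\sqrt{2\pi/(1-n\sigma^2|q''(\xi_n)|)}$ collapses to $\sqrt{2\pi}$. First I would collect the facts about the critical point valid in the present range. Since $N_n^*\ll N_n\ll n$ the sequence $\eta_n\to 0$ is defined, Lemma~\ref{lem:crit-medium} applies, and by Remark~\ref{rem:tn} we have $\xi_n\sim N_n$, $t_n=q'(\xi_n)\sim q'(N_n)=o(t_n^*)$, while $t'_n\in(t_n^*,\eta_n)$; in particular $t_n=o(t_n^*)<t'_n$, so for large $n$ one has $\min(t_n,\,t'_n-t_n)=t_n$. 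Because $\xi_n\sim N_n\gg x_n^*$, the estimate~\eqref{eq:qsecco} gives $n\sigma^2|q''(\xi_n)|=O(N_n^*/N_n)\to 0$; combined with Eq.~\eqref{eq:psiratios} this yields $\psi''(t_n)/\Psi_n''(t_n)=(1-n\sigma^2|q''(\xi_n)|)^{-1}\to 1$, so $\Psi_n''(t_n)$ and $\psi''(t_n)$ are comparable. Finally, since $\Psi_n$ is concave on $(0,t_n^*)\ni t_n$ and $\Psi_n'(t_n)=0$, $\Psi_n$ is increasing on $(0,t_n)$ and decreasing on $(t_n,t'_n)$.

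With these preliminaries the body of the proof of Lemma~\ref{lem:gaussian} carries over. I would set $\eps_n=|\Psi_n''(t_n)|^{-1/2}$ and, using $t_n^2|\psi''(t_n)|\to\infty$ (Lemma~\ref{lem:psit}) together with the comparability just noted, pick $c_n\to\infty$ with $c_n\eps_n=o(t_n)=o(\min(t_n,t'_n-t_n))$. On $|t-t_n|\le c_n\eps_n$ the prefactor is asymptotically constant, $\psi''(t)/\psi''(t_n)=\exp(O(c_n\eps_n/t_n))=1+o(1)$ (from $\psi^{(3)}/\psi''=O(1/u)$, Lemma~\ref{lem:psit}), and the cubic term in the Taylor expansion of $\Psi_n$ is negligible (since $n/\psi''(t_n)=nq''(\xi_n)=o(1)$ and $\psi''(t_n)=O(\Psi_n''(t_n))$), giving
\[ \int_{t_n-c_n\eps_n}^{t_n+c_n\eps_n}\sqrt{|\psi''(t)|}\,\e{\Psi_n(t)}\dd t\sim\sqrt{\tfrac{2\pi\,\psi''(t_n)}{\Psi_n''(t_n)}}\,\e{\Psi_n(t_n)}\sim\sqrt{2\pi}\,\e{\Psi_n(t_n)}, \]
the last equivalence by $n\sigma^2|q''(\xi_n)|\to 0$.

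For the tails I would argue as in Lemma~\ref{lem:gaussian}. On $(t_n+c_n\eps_n,t'_n)$ the prefactor $\sqrt{|\psi''(t)|}$ is nonincreasing (as $\psi^{(3)}\ge 0$) and $\Psi_n(t)\le\Psi_n(t_n)-\tfrac12(1+o(1))c_n^2$; on $(0,t_n-c_n\eps_n)$ I would invoke Lemma~\ref{lem:prefactor}, applicable since $N_n/N_n^*\to\infty>1$, to get that $R_n(t)=\Psi_n(t)+\log\sqrt{|\psi''(t)|}$ is increasing up to $(1-\delta_n)t_n$, hence bounded there by its value at $t_n-c_n\eps_n$. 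Using $\log|\psi''(t_n)|=O(|\log t_n|)$ and $t^2|\psi''(t)|/|\log t|\to\infty$ (Lemma~\ref{lem:psit}), one refines the choice to $c_n\eps_n=t_n\omega(t_n)$ with $\omega\searrow 0$ slowly, so that $-(1+o(1))c_n^2+\log|\psi''(t_n)|\to-\infty$; the integrand on each tail is then at most $\e{\Psi_n(t_n)}$ times a factor tending to $0$, and as the range of integration has length $\le t'_n=o(1)$, both tail integrals are $o(\e{\Psi_n(t_n)})$. Adding the three pieces gives the claim.

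I do not expect a genuine obstacle, since the argument is a transcription of one already carried out; the only point demanding care is the bookkeeping check that every place where the proof of Lemma~\ref{lem:gaussian} invoked $N_n=O(N_n^*)$ — the separation $t_n\le(1-\delta)t_n^*$, the boundedness away from $0$ of $1-n\Re\varphi''(t_n)|q''(\xi_n)|$, and the comparability of $\Psi_n''(t_n)$ with $\psi''(t_n)$ — continues to hold, with room to spare, under $N_n\gg N_n^*$. Each of these does, the net effect being precisely the replacement of $1-n\sigma^2|q''(\xi_n)|$ by its limit $1$.
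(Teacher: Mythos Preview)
Your proposal is correct and follows exactly the same approach as the paper, which simply states that the proof of Lemma~\ref{lem:gaussian} applies without changes and the end result simplifies because $1-n\sigma^2 q''(\xi_n)\to 1$ by Lemma~\ref{lem:hessian}. You have spelled out in detail the verification that the paper leaves implicit.
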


\begin{proof}
	The proof of Lemma~\ref{lem:gaussian} applies without any changes, the end result simplifies because $1- n \sigma^2 q''(\zeta_n) \to 1$ by Lemma~\ref{lem:hessian}. 
\end{proof} 

\begin{lemma} \label{lem:gaussian3} 
	Let $N_n\to \infty$ with $\liminf (N_n/n)>0$. Let $\delta_0>0$ as in Lemma~\ref{lem:crit-large}. Then 
	$$\int_0^{\delta_0} \sqrt{|\psi''(t)|} \e{\Psi_n(t)} \dd t 
		\sim \sqrt{2\pi}\, \e{\Psi_n(t_n)}. $$
\end{lemma}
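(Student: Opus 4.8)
The plan is to reduce the statement to the Gaussian-approximation machinery already used for Lemmas~\ref{lem:gaussian} and~\ref{lem:gaussian2}. First I would note that $\liminf(N_n/n)>0$ together with $N_n^*\ll n$ (Lemma~\ref{lem:critsequences}) forces $N_n\gg N_n^*$, so that Lemmas~\ref{lem:crit-large}, \ref{lem:hessian}(a) and~\ref{lem:prefactor} all apply. By Lemma~\ref{lem:crit-large}, $\Psi_n$ has a unique critical point $t_n\in(0,t_n^*)\subset(0,\delta_0)$, and $\Psi'_n<0$ on $(t_n^*,\delta_0)$. Since $\Psi'_n$ is positive near $0$ (the discussion after Lemma~\ref{lem:inflection}) and $t_n$ is its only zero in $(0,\delta_0)$, we get $\Psi'_n>0$ on $(0,t_n)$ and $\Psi'_n<0$ on $(t_n,\delta_0)$, so $t_n$ is the global maximizer of $\Psi_n$ on $(0,\delta_0)$. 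Moreover, because $N_n\gg N_n^*$, Lemma~\ref{lem:hessian}(a) gives $n\sigma^2|q''(\xi_n)|\to 0$, and then Eq.~\eqref{eq:psiratios} yields $\psi''(t_n)/\Psi''_n(t_n)\to 1$; this is exactly what produces the clean constant $\sqrt{2\pi}$ on the right-hand side, rather than the $\sqrt{2\pi/(1-n\sigma^2 q''(\xi_n))}$ that appears in Lemma~\ref{lem:gaussian}.

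Next I would run the usual Gaussian estimate near $t_n$, copying the proof of Lemma~\ref{lem:gaussian2}. Set $\eps_n=1/\sqrt{|\Psi''_n(t_n)|}$ and choose $c_n\to\infty$ with $c_n\eps_n=o(t_n)$; this is possible because $|\Psi''_n(t_n)|$ and $|\psi''(t_n)|$ are of the same order and $t_n^2|\psi''(t_n)|\to\infty$ by Lemma~\ref{lem:psit}. Since $\delta_0$ is a fixed constant and $t_n\to 0$, the window $(t_n-c_n\eps_n,t_n+c_n\eps_n)$ sits well inside $(0,\delta_0)$. On that window the prefactor $\sqrt{|\psi''(t)|}$ is essentially constant by Lemma~\ref{lem:psit}(c), and cubic corrections to $\Psi_n$ are negligible because $n/\psi''(t_n)=n q''(\xi_n)=O(1)$ (Lemma~\ref{lem:hessian}); this yields a window contribution $\sqrt{2\pi\,\psi''(t_n)/\Psi''_n(t_n)}\,\e{\Psi_n(t_n)}\sim\sqrt{2\pi}\,\e{\Psi_n(t_n)}$.

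It then remains to bound the two tails. On $(t_n+c_n\eps_n,\delta_0)$ I would use that $\Psi_n$ is decreasing there, so $\Psi_n(t)\le\Psi_n(t_n+c_n\eps_n)=\Psi_n(t_n)-\frac{1}{2}(1+o(1))c_n^2$ by the Gaussian shape, together with the fact that $|\psi''|$ is nonincreasing, hence $|\psi''(t)|\le|\psi''(t_n)|$; this bounds the integrand by $\sqrt{|\psi''(t_n)|}\exp(\Psi_n(t_n)-\frac{1}{2}(1+o(1))c_n^2)$, and $\log|\psi''(t_n)|=O(|\log t_n|)$ with $t^2|\psi''(t)|/|\log t|\to\infty$ let us pick $c_n$ so that this is $o(\e{\Psi_n(t_n)})$, exactly as in Eqs.~\eqref{eq:gtails}--\eqref{eq:psisecgrowth}. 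On $(0,t_n-c_n\eps_n)$, where the prefactor $\sqrt{|\psi''(t)|}$ diverges, I would invoke Lemma~\ref{lem:prefactor} to arrange (after possibly enlarging $c_n\eps_n$, still $o(t_n)$) that $R_n=\Psi_n+\log\sqrt{|\psi''|}$ is increasing on this interval, so the integral is again controlled by its value at $t_n-c_n\eps_n$ and the same choice of $c_n$ makes it $o(\e{\Psi_n(t_n)})$.

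The main obstacle is not analytic but bookkeeping: one must check that Lemma~\ref{lem:prefactor} and the tail estimates of Lemma~\ref{lem:gaussian} really go through verbatim in the present regime, where $\eta_n$ (and hence $t'_n$) is no longer available and the upper limit of integration is a fixed $\delta_0$ rather than a sequence tending to zero. This is harmless: Lemma~\ref{lem:prefactor} uses only that $t_n\in(0,t_n^*)$ is the zero of $\Psi'_n$ and that $N_n\gg N_n^*$, and the stretch $(t_n^*,\delta_0)$ of the upper tail---the one genuinely new piece---is absorbed by the same estimate since there $\Psi_n$ is strictly decreasing (Lemma~\ref{lem:crit-large}) and $|\psi''(t)|\le|\psi''(t_n)|$. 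Assembling the window contribution and the two tail bounds gives the claim.
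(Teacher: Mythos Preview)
Your proposal is correct and follows essentially the same approach as the paper's own proof, which is a two-sentence remark that the argument is identical to Lemmas~\ref{lem:gaussian} and~\ref{lem:gaussian2} except that the integration interval is now the fixed $(0,\delta_0)$ rather than one shrinking to zero. You have accurately unpacked the details the paper leaves implicit, including the observation that $N_n\gg N_n^*$ so the prefactor simplifies to $\sqrt{2\pi}$, and you correctly flag that Lemma~\ref{lem:prefactor} (stated with reference to Lemma~\ref{lem:crit-medium}) applies here because its proof only needs $t_n\in(0,t_n^*)$ and $N_n\gg N_n^*$, both supplied by Lemma~\ref{lem:crit-large}.
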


\begin{proof}
	By Lemma~\ref{lem:crit-large}, $\Psi_n(t)$ has a unique critical point $t_n$ in $(0,\delta_0)$ and we may use a Gaussian approximation on this interval---there is no need to restrict to an interval whose length goes to zero. Apart from this difference, the proof is identical to Lemmas~\ref{lem:gaussian} and~\ref{lem:gaussian2}. 
\end{proof}

\section{Evaluation of contour integrals. Proof of the main theorems} \label{sec:asyana}

The proof of the main theorems starts from the decomposition 
\be \label{eq:decomposition}
	\P(S_n = n \mu +N_n) = H_n + V_n
\ee
where $H_n$ and $V_n$ are defined as in~\eqref{eq:hvdef}.
The correctness of~\eqref{eq:decomposition} is checked as in Section~\ref{sec:strategy}, building on the properties of $G(z)$ proven in Section~\ref{sec:lindeloef}. For the proof of Theorem~\ref{thm:critical} it is convenient to decompose $H_n$ further as $H_n = H_n^1 + H_n^2$ where 
\be \label{eq:hnone}
	H_n^1 = \frac{1}{\pi} \int_0^{t'_n} \e{n\Re \varphi(t) - (\mu n +N_n)t} \sin (n \Im \varphi(t)) \dd t
\ee
and $H_n^2$ is a similar integral, but with integration from $t'_n$ to $\eta_n$. 
In the proof of Theorem~\ref{thm:large} we adopt slightly modified definitions and replace the sequence $\eta_n \sim N_n/(n\sigma^2)$ in the domain of integration 
by another sequence $\eps_n \searrow 0$ or by some fixed small $\eps>0$. 

\subsection{Evaluation of $V_n$}

Theorem~\ref{thm:large} only needs upper bounds for $V_n$, provided in Lemmas~\ref{lem:arc} and~\ref{lem:arc2}. Theorems~\ref{thm:moderate} and~\ref{thm:critical} requires the full asymptotic behavior of $V_n$ proven in Lemma~\ref{lem:arcmod}. 

\begin{lemma} \label{lem:arc}
  Suppose $N_n \to \infty$ and $N_n \geq \delta n$ for some $\delta>0$ and all $n\in \bbN$. Then for suitable $C=C_\delta>0$, every sufficiently small $\eta>0$, and all $n\in \bbN$ 
 \begin{equation*} 
	 \frac{1}{\pi}  \int_0^{\pi} \e{n  \Re \phi(\eta + \ii \theta) - (\mu n + N_n) \eta }  \dd \theta
       \leq  \e{- C n \eta}. 
  \end{equation*} 
\end{lemma}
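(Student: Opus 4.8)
The left-hand side has a $\theta$-independent prefactor, so the plan is to pull it out and bound the remaining integral by a supremum. Since $\e{n\Re\varphi(\eta+\ii\theta)}=|G(\e{\eta+\ii\theta})|^n$ (with the convention that this is $0$ where $G$ vanishes) and $N_n\geq\delta n$, the quantity to be estimated equals
\[
  \e{-(\mu n+N_n)\eta}\,\frac1\pi\int_0^\pi|G(\e{\eta+\ii\theta})|^n\,\dd\theta
  \;\leq\; \e{-(\mu+\delta)n\eta}\Bigl(\sup_{0\le\theta\le\pi}|G(\e{\eta+\ii\theta})|\Bigr)^{n}.
\]
Hence it is enough to show that, for some constant $C_1>0$ and all sufficiently small $\eta>0$,
\[
  \sup_{0\le\theta\le\pi}|G(\e{\eta+\ii\theta})|\;\leq\;\e{\mu\eta}\bigl(1+C_1\eta^2\bigr).
\]
Indeed, plugging this in and using $1+x\le\e{x}$ bounds the right-hand side above by $\exp\bigl(-n\eta(\delta-C_1\eta)\bigr)\le\e{-(\delta/2)n\eta}$ once $\eta\le\delta/(2C_1)$, which is the lemma with $C=\delta/2$. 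Note only $N_n\ge\delta n$ is used, not $N_n\to\infty$.

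To prove the supremum estimate I would split $[0,\pi]=[0,\theta_0]\cup[\theta_0,\pi]$ for a small fixed $\theta_0$. On the far arc $[\theta_0,\pi]$ the point $\e{\eta+\ii\theta}$ is bounded away from the dominant singularity $z=1$: by Theorem~\ref{thm:bval}(a) the map $t\mapsto G(\e{t})$ is smooth along the slit, so $z\mapsto G(z)$ and $z\mapsto G'(z)$ are continuous, hence bounded by some $M_1$, on the compact set $\{\e{t}:0\le\Re t\le\eta_0,\ 0\le\Im t\le\pi\}$ (with boundary values taken from above along $[1,\e{\eta_0}]$). Therefore $|G(\e{\eta+\ii\theta})-G(\e{\ii\theta})|\le\int_0^\eta|\e{s+\ii\theta}G'(\e{s+\ii\theta})|\,\dd s=O(\eta)$. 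Moreover, since $p(k)=\e{-q(k)}>0$ for all integers $k>a$ by Assumption~\ref{ass:convexity}, the law of $X$ is aperiodic, so $|G(\e{\ii\theta})|=|\sum_k p(k)\e{\ii k\theta}|<1$ for every $\theta\in(0,2\pi)$, and by compactness $\max_{\theta\in[\theta_0,\pi]}|G(\e{\ii\theta})|=1-c_0$ with $c_0>0$. Combining, $|G(\e{\eta+\ii\theta})|\le 1-c_0/2\le\e{\mu\eta}$ on $[\theta_0,\pi]$ for $\eta$ small.

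On the near arc $[0,\theta_0]$, for $\theta_0$ and $\eta$ small enough $\e{\eta+\ii\theta}$ lies in the neighbourhood of $z=1$ where $\varphi=\mathrm{Log}\,G$ is smooth (Theorem~\ref{thm:bval}(c)), so $|G(\e{\eta+\ii\theta})|=\e{\Re\varphi(\eta+\ii\theta)}$. Using the Taylor expansion of $\varphi$ at $0$ with real cumulants,
\[
  \Re\varphi(\eta+\ii\theta)=\mu\eta+\tfrac{\sigma^2}{2}\bigl(\eta^2-\theta^2\bigr)+O\bigl((\eta^2+\theta^2)^{3/2}\bigr),
\]
and after shrinking $\theta_0$ and the bound on $\eta$ so as to absorb the remainder into the negative term $-\tfrac{\sigma^2}{2}\theta^2$ together with an $O(\eta^2)$ error, one gets $\Re\varphi(\eta+\ii\theta)\le\mu\eta+\sigma^2\eta^2$ there, hence $|G(\e{\eta+\ii\theta})|\le\e{\mu\eta}\e{\sigma^2\eta^2}\le\e{\mu\eta}(1+2\sigma^2\eta^2)$. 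Together with the far-arc bound this yields the required estimate with $C_1=2\sigma^2$.

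The Taylor estimate and the bookkeeping of constants are routine; the two points I would be careful about, both supplied by earlier results, are (i) the boundedness of $G'$ up to the slit $[1,\e{\eta_0}]$ approached from above, which relies on Theorem~\ref{thm:bval}, and (ii) the strict inequality $|G(\e{\ii\theta})|<1$ for $\theta\ne0$, i.e.\ aperiodicity of the step law, which comes from the eventual positivity of $p(k)$ in Assumption~\ref{ass:convexity}. The main obstacle is really just the careful handling of $G$ near the slit segment $[1,\e{\eta}]$, along whose upper side the contour of $V_n$ begins.
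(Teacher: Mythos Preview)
Your proof is correct and follows essentially the same approach as the paper's: split the $\theta$-interval into a near arc and a far arc, use the Taylor expansion of $\varphi$ (Theorem~\ref{thm:bval}(c)) on the near arc, and use $|G(\e{\ii\theta})|<1$ on the far arc. The only difference is packaging: you first establish an $n$-independent bound $\sup_\theta|G(\e{\eta+\ii\theta})|\le\e{\mu\eta}(1+C_1\eta^2)$ and then invoke $N_n\ge\delta n$ via the prefactor, whereas the paper estimates $\Re\varphi(t)-(\mu+N_n/n)\Re t$ directly on each arc. Your separation makes the role of the hypothesis $N_n\ge\delta n$ slightly more transparent and yields $C=\delta/2$ rather than the paper's $\min(\delta/2,\mu)$, but otherwise the arguments coincide. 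One small remark: your invocation of boundary regularity up to the slit for the far-arc estimate is not actually needed, since on $[\theta_0,\pi]$ the path $s\mapsto\e{s+\ii\theta}$ stays away from $[1,\infty)$ and $G$ is analytic there; ordinary compactness suffices.
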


\begin{proof} 
   Let $\mathcal{S}_+$ be the half-strip $\{t \in \bbC\mid \Re t \geq 0,\, \Im t \in [0,\pi )\}$.   By Theorem~\ref{thm:bval}, we know that as $t\to 0$ in $\mathcal{S}_+$
   \be \label{eq:cornerbound}
   \Re \bigl(\phi(t) - \mu t - \frac{N_n}{n} t\bigr)  =  - \frac{N_n}{n} \Re t + \frac{1}{2} \sigma^2 \bigl( (\Re t)^2 - (\Im t)^2\bigr) + O(t^3).
   \ee
    Then for sufficiently small $\eps_1>0$ and all $t\in \mathcal{S}_+$ with $\max(|\Re t|,|\Im t|) \leq \eps_1$, the right side of Eq.~\eqref{eq:cornerbound} is smaller than $- \delta \Re t /2$, which shows 
    \be 
	\frac{1}{\pi}  \int_0^{\eps_1} \e{n \Re \phi(\eta + \ii \theta) - (\mu n + N_n) \eta} \dd \theta
    	       \leq \frac{\eps_1}{\pi} \e{- n \delta \eta/2} 
    \ee 
	for all $\eta \in (0,\eps_1)$. On the other hand on the unit circle $G(z)$ is given by a power series with strictly positive coefficients and therefore $|G(z)|$ has a unique maximum at $z=1$. It follows that for all $t = \ii \theta$ with $\theta \in [\eps_1, 2\pi - \eps_1]$, we know $\Re \varphi(t)<0$.  By continuity this extends to some thin vertical strip $\Im t\in [\eps_1,2\pi-\eps_1]$, $\Re t\in [0,\eps_2]$ so that $\Re (\varphi(t) - \mu t) \leq - \mu \Re t$ and 
	\be \label{eq:arc2}
		\frac{1}{\pi}  \int_{\eps_1}^\pi \e{n \Re \phi(\eta + \ii \theta) - (\mu n + N_n) \eta }  \dd \theta \leq  \frac{\pi - \eps_1}{\pi} \e{- n\mu \eta} 
	\ee 
	for all $\eta \in (0,\eps_2)$. To conclude, we let $C_\delta:= \min(\delta/2, \mu)$. 
\end{proof}

\begin{lemma} \label{lem:arc2}
  Assume $N_n = m- n \mu \to \infty$ and $N_n = o(n)$. Let $\eps_n \searrow 0$ with $\eps_n \leq \eta_n = (1+o(1))\frac{N_n}{n \sigma^2}$. Then for suitable constant $C>0$, as $n\to \infty$,
 \begin{equation*} 
  \Bigl|\frac{1}{2\pi}   \int_0^{\pi} \e{n \Re \phi(\eps_n + \ii \theta) - m \eps_n)} 
  \dd \theta\Bigr|
       \leq \e{- (1+o(1)) N_n \eps_n/2}.
  \end{equation*} 
\end{lemma}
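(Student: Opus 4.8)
The plan is to adapt the proof of Lemma~\ref{lem:arc} to the present regime $N_n=o(n)$, $\eps_n\searrow0$, splitting the integral over $\theta\in[0,\pi]$ at a small fixed $\theta_0>0$ into a \emph{corner piece} $\theta\in[0,\theta_0]$, controlled by the Taylor expansion of $\varphi$ at the origin, and an \emph{outer piece} $\theta\in[\theta_0,\pi]$, controlled by the strict inequality $|G(\e{\ii\theta})|<G(1)=1$ for $\theta\neq0$. Throughout I use that $\eta_n=(1+o(1))N_n/(n\sigma^2)$, so $\eps_n\le\eta_n$ yields $n\eps_n\le(1+o(1))N_n/\sigma^2$, hence $\tfrac{n\sigma^2}{2}\eps_n^2\le\tfrac12(1+o(1))N_n\eps_n$ and $n\eps_n^3=o(N_n\eps_n)$, and also $N_n\eps_n\le N_n\eta_n=(1+o(1))N_n^2/(n\sigma^2)=o(n)$.

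For the corner piece, Theorem~\ref{thm:bval}(c) with $r=2$ (recall $\kappa_1=\mu$, $\kappa_2=\sigma^2$) gives $\varphi(t)=\mu t+\tfrac{\sigma^2}{2}t^2+O(|t|^3)$ as $t\to0$ in the strip, so for large $n$ and $\theta\in[0,\theta_0]$ (where $t=\eps_n+\ii\theta$ is small),
$$ n\Re\varphi(\eps_n+\ii\theta)-m\eps_n=-N_n\eps_n+\tfrac{n\sigma^2}{2}\eps_n^2-\tfrac{n\sigma^2}{2}\theta^2+nO\bigl((\eps_n^2+\theta^2)^{3/2}\bigr). $$
Since $(\eps_n^2+\theta^2)^{3/2}\le C(\eps_n^3+\theta^3)$ (no cross terms), the remainder is at most $Cn\eps_n^3+Cn\theta^3=o(N_n\eps_n)+C\theta_0 n\theta^2$; picking $\theta_0$ small enough that $C\theta_0\le\sigma^2/4$ and using the bound on $\tfrac{n\sigma^2}{2}\eps_n^2$ above, we get
$$ n\Re\varphi(\eps_n+\ii\theta)-m\eps_n\le-(1+o(1))\tfrac{N_n\eps_n}{2}-\tfrac{\sigma^2 n}{4}\theta^2 . $$
Integrating the Gaussian factor over $\theta\in[0,\theta_0]$ contributes $O(n^{-1/2})\le1$ for large $n$, so the corner piece is $\le e^{-(1+o(1))N_n\eps_n/2}$.

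For the outer piece I argue as at the end of the proof of Lemma~\ref{lem:arc}: $G(z)=\sum_k p(k)z^k$ has strictly positive coefficients, so $|G(\e{\ii\theta})|<G(1)=1$ for $\theta\in(0,2\pi)$ and, by compactness, $\rho:=\sup_{\theta\in[\theta_0,\pi]}|G(\e{\ii\theta})|<1$; since the analytic continuation $G$ is continuous on a neighbourhood of the compact arc $\{\e{\ii\theta}:\theta\in[\theta_0,\pi]\}$, which avoids the slit $[1,\infty)$, there is $\eps_2>0$ with $|G(\e{s+\ii\theta})|\le(1+\rho)/2=:\rho'<1$ for all $s\in[0,\eps_2]$ and $\theta\in[\theta_0,\pi]$. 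Hence, once $\eps_n\le\eps_2$, $n\Re\varphi(\eps_n+\ii\theta)-m\eps_n\le n\log\rho'=-cn$ with $c:=|\log\rho'|>0$, and since $N_n\eps_n=o(n)$ we get $\tfrac12e^{-cn}\le e^{-(1+o(1))N_n\eps_n/2}$ for large $n$. Adding the two pieces and absorbing the constant factor $2$ gives the claim.

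The only delicate point is the bookkeeping in the corner estimate, where one must retain the sharp constant $\tfrac12$ in front of $N_n\eps_n$: this is exactly why the term $\tfrac{n\sigma^2}{2}\eps_n^2$ has to be estimated through $n\eps_n\le(1+o(1))N_n/\sigma^2$ (this is what upgrades $-N_n\eps_n$ to $-(1+o(1))N_n\eps_n/2$), and why the cubic remainder must be split into pure powers so that $n\eps_n^3=o(N_n\eps_n)$ while $n\theta^3$ is swallowed by the quadratic Gaussian term. Both facts are immediate from $\eps_n\le\eta_n$ and $\eps_n\searrow0$; the remaining bounds are routine and the outer range $[\theta_0,\pi]$ is essentially verbatim from Lemma~\ref{lem:arc}.
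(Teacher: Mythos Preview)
Your proof is correct and follows essentially the same approach as the paper's: split the $\theta$-integral at a fixed small $\theta_0$, control the corner piece via the Taylor expansion of $\varphi$, and bound the outer piece via $|G(\e{\ii\theta})|<1$ plus continuity. The paper compresses the corner estimate into the single line $\Re\bigl(\varphi(t)-\tfrac{m}{n}t\bigr)\le-\bigl(\tfrac{N_n}{n}-\tfrac12\sigma^2\Re t\bigr)\Re t$ by simply discarding the nonpositive $-\tfrac{\sigma^2}{2}(\Im t)^2+O((\Im t)^3)$ terms, whereas you track the cubic remainder more explicitly and retain a Gaussian factor in $\theta$; and for the outer piece the paper uses $\Re\varphi\le0$ to obtain $\e{-n\mu\eps_n}$ while you use $\Re\varphi\le\log\rho'<0$ to obtain $\e{-cn}$---both are ample since $N_n\eps_n=o(n)$.
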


\begin{proof} 
   The proof is analogous to Lemma~\ref{lem:arc}. We start from the estimate~\eqref{eq:cornerbound}. In a sufficiently small $\eps_1$-neighborhood of the  corner $0$ of the half-strip $\mathcal{S}_+$, we have 
   \be \label{eq:aml}
    \Re \bigl(\phi(t) - \frac{m}{n} t\bigr) \leq   - \Bigl( \frac{N_n}{n} - \frac{1}{2} \sigma^2 \Re t \Bigr)   \Re t. 
   \ee
   Notice that $\eps_1$ can be chosen $n$-independent: we only need $- \sigma^2 (\Im t)^2 + O((\Im t)^3) \leq 0$ for $|\Im t|\leq \eps_1$. 
  When $\Re t= \eps_n$ with $\eps_n \leq \frac{N_n}{ n \sigma^2}$, the upper bound in~\eqref{eq:aml} is in turn bounded by $- (1+o(1)) N_n \eps_n /2$. 
  
  When $\Re t= \eps_n$ is small but $\Im t$ is bounded away from $0$ and $2 \pi$, we estimate  
  \be \label{eq:middlebound} 
   \Re \Bigl(\phi(t)- \frac{m}{n} t\Bigr) \leq - \frac{m}{n} \Re t = - \Bigl( \mu + \frac{N_n}{n}\Bigr) t \leq - \frac{N_n}{n} t. 
  \ee
  and we conclude as in Lemma~\ref{lem:arc}.
\end{proof} 

The proof of the next lemma is closely related to the treatment of moderate deviations for random variables with generating functions analytic beyond $z=1$ given by Ibragimov and Linnik~\cite{ibragimov-linnik}.

\begin{lemma} \label{lem:arcmod}
	Let $N_n \to \infty$ along $\sqrt{n} \ll N_n = O( n^{1-\gamma})$ for some $\gamma>0$. Define $\eta_n \sim N_n/(n \sigma^2)$ by~\eqref{eq:etan} and $V_n$ as in~\eqref{eq:hvdef}. 
	Then Eq.~\eqref{eq:vnfinal} holds true. 
\end{lemma}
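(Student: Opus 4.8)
The plan is to evaluate $V_n$ by a Gaussian approximation of the arc integral around $\theta=0$, following~\cite{ibragimov-linnik}, and then to rewrite the resulting exponent through the Cram\'er series exactly as in Step~3 of Section~\ref{sec:strategy}. Recall $V_n=\frac{1}{\pi}\,\Re\int_0^\pi\e{n\varphi(\eta_n+\ii\theta)-m(\eta_n+\ii\theta)}\dd\theta$, and recall from Theorem~\ref{thm:bval}(c) that $\varphi$ is smooth on a neighbourhood of $0$ in $\{\Im t\ge0\}$, holomorphic inside, with real derivatives at $0$; hence $\Re\varphi''(t)\to\sigma^2>0$, $\varphi'''$ is bounded near $0$, and $\Im\varphi^{(j)}(t)=O(t^K)$ for every $K$ as $t\searrow0$ (each $\varphi^{(j)}$ having real Taylor coefficients at $0$). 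The defining relation $\Re\varphi'(\eta_n)=\mu+N_n/n$ together with $\Re\varphi'(t)=\mu+\sigma^2 t+O(t^2)$ gives $\eta_n\sim N_n/(n\sigma^2)$; since $\sqrt n\ll N_n=O(n^{1-\gamma})$, this keeps $\eta_n$ polynomially between $n^{-1/2}$ and $n^{-\gamma}$, so that $n\delta_n\eta_n^K=o(1)$ and $\eta_n^K=o(\delta_n)$ once $K$ is large enough (depending on $\gamma$), where throughout $\delta_n:=n^{-1/2}\log n$. I would then split $\int_0^\pi=\int_0^{\delta_n}+\int_{\delta_n}^{\delta_1}+\int_{\delta_1}^\pi$ with $\delta_1>0$ a small fixed constant.

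On the inner window, Taylor expanding the exponent about $\eta_n$ and using $\Re\varphi'(\eta_n)=m/n$: the linear term is $\ii\theta(n\varphi'(\eta_n)-m)=-n\theta\,\Im\varphi'(\eta_n)=O(n\delta_n\eta_n^K)=o(1)$, the imaginary part of the quadratic term is $O(n\delta_n^2\eta_n^K)=o(1)$, and the cubic and higher terms are $O(n\delta_n^3)=o(1)$, all uniformly on $[0,\delta_n]$. Thus the integrand there equals $\e{n\Re\varphi(\eta_n)-m\eta_n}\,\e{-\frac12 n\Re\varphi''(\eta_n)\theta^2}\bigl(1+o(1)\bigr)$, and completing the Gaussian integral (its tail beyond $\delta_n$ is $O(\e{-c(\log n)^2})=o(n^{-1/2})$ because $n\delta_n^2=(\log n)^2\to\infty$) shows that this piece is asymptotic to $\frac{1}{\sqrt{2\pi n\Re\varphi''(\eta_n)}}\,\e{n\Re\varphi(\eta_n)-m\eta_n}$, which is exactly Eq.~\eqref{eq:veval}. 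On the middle range, the Taylor expansion of $\Re\varphi(\eta_n+\ii\theta)$ together with $\theta\,|\Im\varphi'(\eta_n)|\le C_K\theta\,\eta_n^K\le c\theta^2$ on $[\delta_n,\delta_1]$ — valid because $C_K\eta_n^K=o(\delta_n)$ while $\theta\ge\delta_n$ — gives $\Re\varphi(\eta_n+\ii\theta)\le\Re\varphi(\eta_n)-c'\theta^2$ for $\delta_1$ small and $n$ large, so this piece is $\le\pi\,\e{n\Re\varphi(\eta_n)-m\eta_n}\e{-c'n\delta_n^2}=o\bigl(n^{-1/2}\e{n\Re\varphi(\eta_n)-m\eta_n}\bigr)$. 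On the outer range I would use that $G$ is continuous on $\overline{\mathbb D}$ and on the compact arc $\{\e{\eta+\ii\theta}:0\le\eta\le\eta_0,\ \delta_1\le\theta\le\pi\}$, and that $|G|$ attains its maximum over $\overline{\mathbb D}$ only at $z=1$ (with $G(1)=1$), so $|G(\e{\eta_n+\ii\theta})|\le1-c(\delta_1)$ there for $n$ large and $\Re\varphi(\eta_n+\ii\theta)=\log|G(\e{\eta_n+\ii\theta})|\le-c''<0$; since $n\Re\varphi(\eta_n)=O(n\eta_n)=O(N_n)=o(n)$, this piece is $\le\pi\,\e{-c''n-m\eta_n}=o\bigl(n^{-1/2}\e{n\Re\varphi(\eta_n)-m\eta_n}\bigr)$ as well. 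Summing the three contributions proves Eq.~\eqref{eq:veval}.

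Finally I would pass from Eq.~\eqref{eq:veval} to Eq.~\eqref{eq:vnfinal} just as in the paragraph containing Eq.~\eqref{eq:vnfinal}: $\Re\varphi''(\eta_n)\to\sigma^2$, and writing $n\Re\varphi(\eta_n)-m\eta_n=-n\bigl[(\mu+\tfrac{N_n}{n})\eta_n-\Re\varphi(\eta_n)\bigr]$, substituting the expansion of $\eta_n$ in powers of $N_n/n$ dictated by $\Re\varphi'(\eta_n)=\mu+N_n/n$, and using the real-coefficient Taylor expansion of $\Re\varphi$ (Theorem~\ref{thm:bval}(c)) together with Definition~\ref{def:cramer} (equivalently Eq.~\eqref{eq:craleg}), one obtains $n\Re\varphi(\eta_n)-m\eta_n=-\frac{N_n^2}{2n\sigma^2}+\frac{N_n^3}{n^2}\sum_{j=0}^{r-1}\lambda_j\bigl(\tfrac{N_n}{n}\bigr)^{j}+o(1)$. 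The $o(1)$ error is controlled by the truncation error $O\bigl(n(N_n/n)^r\bigr)\to0$ (the hypothesis on $r$) and by $n\eta_n^K\to0$ for $K$ large (the super-polynomially small imaginary-part contributions). Combined with Eq.~\eqref{eq:veval}, this is Eq.~\eqref{eq:vnfinal}.

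I expect the main obstacle to be the bookkeeping in the intermediate range $[\delta_n,\delta_1]$ and the compatibility of scales it demands: the cut-off $\delta_n$ and the order $K$ of vanishing of $\Im\varphi'(\eta_n)$ must be chosen so that the linear defect $\theta\,\Im\varphi'(\eta_n)$ is absorbed into the genuine Gaussian decay $-\tfrac12 n\Re\varphi''(\eta_n)\theta^2$ while simultaneously $n\delta_n^3\to0$ and $n\delta_n^2\to\infty$. The choice $\delta_n=n^{-1/2}\log n$ together with $K>1/(2\gamma)$ achieves this, and it is precisely here that the hypothesis $\sqrt n\ll N_n=O(n^{1-\gamma})$ — which confines $\eta_n$ polynomially between $n^{-1/2}$ and $n^{-\gamma}$ — is used.
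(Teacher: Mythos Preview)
Your proof is correct and follows essentially the same approach as the paper: a three-piece splitting of the arc integral with $\delta_n=(\log n)/\sqrt{n}$, a Gaussian approximation on the inner window exploiting the cancellation $\Re\varphi'(\eta_n)=m/n$ and the super-polynomial smallness of $\Im\varphi^{(j)}(\eta_n)$, a compactness argument on the outer arc via $|G|<1$ away from $z=1$, and the final rewriting through the Cram\'er series. The only cosmetic difference is that you work with the representation $V_n=\frac{1}{\pi}\Re\int_0^\pi \e{n\varphi(\eta_n+\ii\theta)-m(\eta_n+\ii\theta)}\dd\theta$ from the outset, which makes the bookkeeping of the linear $-\ii m\theta$ term slightly cleaner than the paper's cosine formulation.
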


\begin{proof}
	Since $\Re \phi'(t) = \mu + \sigma^2 t + O(t^2)$ is strictly increasing for small $t>0$ and $N_n / n \to 0$, we may fix $\delta>0$ small enough so that 
	for large $n\geq n_\delta$, the equation~\eqref{eq:etan} has indeed a unique solution $\eta_n \in (0,\delta)$, which satisfies $\eta_n \sim N_n/(n\sigma^2)$. 
	Arguments analogous to the proof of Eq.~\eqref{eq:arc2} show 
	\be \label{eq:arc3}
		\frac{1}{\pi} \Bigl| \int_{\delta}^\pi \e{n (\varphi(\eta_n + \ii \theta)) - (\mu n + N_n) (\eta_n + \ii \theta)} \cos (n \Im \varphi(\eta_n + \ii \theta)) \dd \theta \Bigr| \leq \e{- n \mu \eta_n}.
	\ee
	We also have, uniformly in $s \in (0,\delta_n)$, 
   \begin{align}  
     \phi(\eta_n + \ii s) & = \phi(\eta_n) + \phi'(\eta_n) \ii s - \frac{1}{2} \phi''(\eta_n) s^2 + O(s^3)\notag  \\
      & = \phi(\eta_n) + \ii \bigl( \mu+ \frac{N_n}{n}\bigr) s - \Im \phi'(\eta_n) s - \frac{1}{2} \phi''(\eta_n) s^2 + O(s^3)
   \end{align} 	
	and therefore 
    \begin{multline} 
	    \Re \phi(\eta_n + \ii s) - \Bigl(\mu + \frac{N_n}{n}\Bigr) \eta_n 
	    = \Re \phi(\eta_n) - \frac{1}{2} \phi''(\eta_n) s^2 (1+O(s)) -  \Im \phi'(\eta_n) s.
    \end{multline}
    Since $\varphi''(\eta_n) \to \sigma^2$, standard arguments show 
  \be  \label{eq:arc4}
  	\frac{1}{\pi} \int_0^\delta \e{ n \Re  \varphi(\eta_n) - \frac{n}{2} \Re  \phi''(\eta_n) s^2 (1+O(s))} \dd s \sim \frac{\exp(n \Re \varphi(\eta_n))}{\sqrt{2 \pi n \sigma^2}}, 
  \ee 
  	moreover the contribution to the interval from $s \geq \delta_n:= (\log n)/\sqrt{n}$ is negligible and Eq.~\eqref{eq:arc4} holds true with $\delta$ replaced by $\delta_n$. By Theorem~\ref{thm:bval} and the relation $\Im G(\e{t}) = |G(\e{t})| \Im \varphi(t)$ the imaginary part of $\varphi'(\eta_n)$ vanish faster than any power of $\eta_n =O(n^{-\gamma})$,  hence $n \Im \varphi'(\eta_n)$ can be neglected; the same argument works for $n \Im \varphi''(\eta_n)$. For the cosine, we look separately at $(0,\delta_n)$ and $(\delta_n,\delta)$. On $(0, \delta_n)$, again by Theorem~\ref{thm:bval}, $\sup_{s \in (0,\delta_n)}|\Im \varphi(\eta_n + \ii s)|$, vanishes faster than any power of $\max (\eta_n, \delta_n)$ hence it can be neglected. On $(\delta_n,\delta)$ we simply bound the cosine by $1$. As the contribution from $(\delta_n,\delta)$ to the integral~\eqref{eq:arc4} is negligible, combining with~\eqref{eq:arc3} we find in the end 
  	\be \label{eq:vndeco}
  		V_n = (1+o(1)) \frac{ \exp(n \Re \varphi(\eta_n) - \mu n - N_n)}{\sqrt{2 \pi n \sigma^2}} + O \bigl( \e{- n \mu \eta_n} ). 
  	\ee
  	By Theorem~\ref{thm:bval} and Definition~\ref{def:cramer}, we have 
  	\be 
		n\Re \varphi(\eta_n) - \mu n - N_n =\Bigl(1+O(\frac{N_n}{n}) \Bigr) \frac{N_n^2}{2 n \sigma^2} 
	\ee
	 with correction terms expressed in terms of the Cram{\'e}r series. In particular, the exponent goes to $- \infty$ as $ -N_n^2/ n$, i.e., slower than the term $- n \mu \eta_n = - N_n \mu$ in the second term. Therefore the second term in Eq.~\eqref{eq:vndeco} is negligible compared to the first and Eqs.~\eqref{eq:veval} and~\eqref{eq:vnfinal} hold true. 
\end{proof}

\subsection{Evaluation of $H_n$}
Here we focus on the regime $\liminf N_n/N_n^*>1$; the case $\limsup N_n/N_n^* \leq 1$ is treated in the proof of Theorem~\ref{thm:moderate}. In order to apply the Gaussian approximation from Section~\ref{sec:gaussian}, we need to drop the sine and replace $\Im \varphi(t)$ with $G(\e{t})$. 

\begin{lemma} \label{lem:sineaway}
  Let $\eps_n \searrow 0$ faster than some power of $n$, i.e.,  $n \eps_n^p \to 0$ for some $p>0$. Then 
  \begin{equation*}
     \sin \bigl( n \Im \phi(t) \bigr) \sim n \Im G(\e{t})
   \end{equation*}
   uniformly for $t \in [0,\eps_n]$. 
\end{lemma}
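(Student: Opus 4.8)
The plan is to reduce the statement to the elementary relation $\sin u = u(1+O(u^2))$ as $u\to 0$, applied twice: once with $u=n\Im\varphi(t)$ and once with $u=\Im\varphi(t)$. The real content is to check that both of these quantities tend to $0$ uniformly for $t\in[0,\eps_n]$; once that is in hand, the asymptotic equivalence is immediate.

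First I would extract a quantitative decay bound from Theorem~\ref{thm:bval}(c). Since $\varphi$ is smooth near $0$ with real Taylor coefficients $\kappa_j$, for each fixed $r\in\N$ there is $C_r$ with
\[
  |\Im\varphi(t)| = \Bigl|\Im\bigl(\varphi(t) - \textstyle\sum_{j=1}^r \kappa_j t^j/j!\bigr)\Bigr| \leq \bigl|\varphi(t) - \textstyle\sum_{j=1}^r \kappa_j t^j/j!\bigr| \leq C_r\, t^{r+1}
\]
for all real $t$ in a neighbourhood of $0$. Choosing $r$ with $r+1\geq p$ and using $\eps_n\leq 1$ for $n$ large, this gives $n\sup_{t\in[0,\eps_n]}|\Im\varphi(t)|\leq C_r\, n\eps_n^{r+1}\leq C_r\, n\eps_n^{p}\to 0$, so both $n\Im\varphi(t)$ and $\Im\varphi(t)$ go to $0$ uniformly on $[0,\eps_n]$. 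I would also record that $\Re\varphi$ is continuous near $0$ with $\varphi(0)=\mathrm{Log}\,G(1)=0$, hence $\sup_{[0,\eps_n]}|\Re\varphi(t)|\to 0$ and $\e{\Re\varphi(t)}\to 1$ uniformly.

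Next I would write $G(\e{t})=\e{\varphi(t)}$, so $\Im G(\e{t})=\e{\Re\varphi(t)}\sin(\Im\varphi(t))$, and form at each $t\in[0,\eps_n]$ with $\Im\varphi(t)\neq 0$ the ratio
\[
  \frac{\sin(n\Im\varphi(t))}{n\,\Im G(\e{t})} = \e{-\Re\varphi(t)}\cdot\frac{\sin(n\Im\varphi(t))}{n\Im\varphi(t)}\cdot\frac{\Im\varphi(t)}{\sin(\Im\varphi(t))}.
\]
By the previous step together with $\sin u = u(1+O(u^2))$, each of the three factors converges to $1$ uniformly over such $t$. At the remaining points, where $\Im\varphi(t)=0$ (for instance $t=0$; for $n$ large $|\Im\varphi(t)|<\pi$, so $\sin(\Im\varphi(t))=0\Leftrightarrow\Im\varphi(t)=0$), both $\sin(n\Im\varphi(t))$ and $\Im G(\e{t})$ vanish, so the equivalence holds trivially there. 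Collecting the two cases yields $\sin(n\Im\varphi(t))=(1+o(1))\,n\,\Im G(\e{t})$ uniformly on $[0,\eps_n]$.

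The only delicate point — the ``main obstacle'' — is the first step: turning the qualitative assertion of Theorem~\ref{thm:bval}(c) that $\Im\varphi(t)$ decays faster than every power of $t$ into the explicit estimate $n\sup_{[0,\eps_n]}|\Im\varphi(t)|\to 0$ under the hypothesis $n\eps_n^p\to 0$. Everything downstream is a routine use of $\sin u\sim u$ and the uniform boundedness of $\e{\Re\varphi(t)}$.
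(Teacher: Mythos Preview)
Your argument is correct and follows essentially the same route as the paper's proof: both reduce to $\sin u\sim u$ after establishing that $n\Im\varphi(t)\to 0$ uniformly on $[0,\eps_n]$ via the fast decay of $\Im\varphi(t)$ from Theorem~\ref{thm:bval}(c), and both link $\Im\varphi(t)$ to $\Im G(\e{t})$ through the identity $G(\e{t})=\e{\varphi(t)}$ together with $\e{\Re\varphi(t)}\to 1$. Your treatment is in fact slightly tidier, writing $\Im G(\e{t})=\e{\Re\varphi(t)}\sin(\Im\varphi(t))$ exactly and handling the vanishing points explicitly.
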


\begin{proof} 
 We have 
 \be \label{eq:gphi}
 \Im G(\e{t}) = \Im \e{\phi(t)} = \e{\Re \phi(t)} \Im \phi(t) = |G(\e{t})|\, \Im \phi(t). 
  \ee
  We know that 
  \be 
     \sup_{t\in [0,\eps_n]}|\Re G(\e{t}) -1| = O(\eps_n) \to 0,
  \ee
  and the imaginary part vanishes faster than any power, in particular  
  \be
     \sup_{t\in [0,\eps_n]} |\Im G(\e{t})| = O( \eps_n^p) \to 0. 
  \ee
  It follows that a similar bound applies to $\Im \phi(t)$. As a consequence 
  \begin{multline}
    \sup_{t\in [0,\eps_n]} \Bigl|\frac{ n \Im \phi(t) - \sin (n \Im \phi(t))}{\sin(n \Im \phi(t))} \Bigr| = O \Bigl( n^2 \sup_{t \in [0,\eps_n]} |\Im \phi(t)|^2\Bigr)\\
    = O\Bigl( n^2 \eps_n^{2p}\Bigr) \to 0, 
  \end{multline}
  Thus $\sin (n \Im \phi(t)) \sim n \Im \phi(t)$, uniformly in $[0,\eps_n]$. Eq.~\eqref{eq:gphi} in turn shows $\Im \phi(t) \sim \Im G(\e{t})$ uniformly in $[0,\eps_n]$.  
\end{proof}

The dominant contribution to the Gaussian integral in Lemmas~\ref{lem:gaussian}-\ref{lem:gaussian3} comes from windows of width $o(t_n)$ around $t_n$; by Remark~\ref{rem:tn} and Lemma~\ref{lem:critsequences}, $t_n = O(t_n^*) = O(N_n^*/n) = O(n^{- [1-\alpha]/[2-\alpha]})$. This latter bound vanishes like some negative power of $n$, hence Lemma~\ref{lem:sineaway} is applicable on the interval contributing most to the Gaussian integrals. Outside we use the inequality $|\sin (n \Im \varphi(t))|\leq n \Im \varphi(t) = n (1+o(1)) \Im G(\e{t})$, and we find: For $N_n \to \infty$ with $\liminf_{n \to \infty} N_n/N_n^*>1$, we have 
\be \label{eq:g1}
	H_n^1\sim \frac{n}{\sqrt{1- n \sigma^2 q''(\zeta_n)}}\, \e{\Psi_n(t_n)}.
\ee
For $N_n \to \infty$ with $N_n \gg N_n^*$, 
\be \label{eq:g2}
	H_n^1 \sim n \e{\Psi_n(t_n)}.
\ee
$H_n^2$ is estimated in the proof of Theorem~\ref{thm:critical} and is not needed in the proof of Theorem~\ref{thm:large}. Finally for $\liminf N_n/n >0$ and $\delta_0>0$ small enough as in Lemma~\ref{lem:crit-large}, 
\be \label{eq:g3}
	\frac{1}{\pi} \int_0^{\delta_0} \e{ n\Re \varphi(t) - mt} \sin \bigl( n \Im \phi(t) \bigr) \dd t \sim n \e{\Psi_n(t_n)}. 
\ee
Remember that $\Psi_n(t_n) = \Phi_n(t_n,\zeta_n) = - f_{nr}(x_{nr}) + o(1)$ and $1-n \sigma^2 |q''(\zeta_n)| \sim 1- n \sigma^2 |q''(x_{nr})|$ by the definition of $\Psi_n$ and Eq.~\eqref{eq:phitrunc}, so the right-hand sides in Eqs.~\eqref{eq:g1}--\eqref{eq:g3} correspond to the relevant contribution in Theorems~\ref{thm:critical} and~\ref{thm:large}. 

\subsection{Critical scale: proof of Theorem~\ref{thm:critical}}

Let $N_n \to \infty$ with $\liminf N_n/N_n^* >1$ and $N_n = O(n^{1/(2-\alpha)})$. 
Let $\eta_n \sim N_n/(n\sigma^2)$ be the solution of~\eqref{eq:etan},  define $H_n$ and $V_n$ as in~\eqref{eq:hvdef}, and $H_n^1$ and $H_n^2$ as in~\eqref{eq:hnone}
Thus we have 
\be 
	\P(S_n = n \mu + N_n) = V_n + H_n^1 + H_n^2.
\ee
 By Lemma~\ref{lem:arcmod}, the asymptotics of $V_n$ is given in terms of the  Cram{\'e}r series as in Eq.~\eqref{eq:vnfinal}. $H_n^1$ is evaluated with Lemma~\ref{lem:gaussian} and Lemma~\ref{lem:gaussian2} as the right-hand side of Eq.~\eqref{eq:hnfinal}. The proof is complete once we show $H_n^2 = o(V_n)$. 

On $(t'_n,\eta_n)$ the function $\Psi_n(t)$ is increasing by Lemma~\ref{lem:crit-medium}, 
\be \label{eq:critexpob}
	\sup_{t \in (t'_n, \eta_n)}	\Psi_n(t)  \leq \Psi_n(\eta_n) = \Bigl( n \Re \varphi(\eta_n) - (n \mu+ N_n) \eta_n \Bigr) + \psi(\eta_n). 
\ee
The term in big parentheses can be reexpressed with the Cram{\'e}r series and is exactly equal to the exponent in the evaluation of $V_n$ (see Eqs.~\eqref{eq:veval} and~\eqref{eq:vnfinal}), while $\psi(\eta_n) \to - \infty$. The prefactor satisfies
\be 
	\sup_{t \in (t'_n, \eta_n)} \sqrt{|\psi''(t)|} \leq \sqrt{|\psi''(t'_n)|} = \exp \Bigl( O(\log t'_n) \Bigr) = \exp \Bigl(O(\log n)\Bigr). 
\ee
(remember~\eqref{eq:psiseclog} and $t'_n \geq t^*_n$, with $t^*_n$ of the order of $N_n^*/n \gg 1/\sqrt{n})$. On the other hand $\eta_n \searrow 0$ faster than some power of $n$, so by Lemma~\ref{lem:psit} we have for some constant $C$
\be \label{eq:psietan}
	|\psi(\eta_n) |\gg |\log \eta_n|\geq C \log n \gg \log \sqrt{|\psi''(t'_n)|},
\ee 
whence 
\be 
	\psi(\eta_n) +   \log n + \sup_{t \in (t'_n,\eta_n)} \log \sqrt{|\psi''(t)|} \to - \infty. 
\ee 
In view of~\eqref{eq:critexpob},~\eqref{eq:veval} and~\eqref{eq:vnfinal}, we obtain
\be \label{eq:crithnrem}
	H_n^2 = \frac{n}{\pi} \int_{t'_n}^{\eta_n} \e{n \Re \varphi(t) -( n \mu + N_n) t} \Im G(\e{t}) \dd t = o(V_n).
\ee
which concludes the proof. \hfill $\qed$

\subsection{Big jump: proof of Theorem~\ref{thm:large}}

Let $N_n \to \infty$ with $N_n \gg n^{1/(2-\alpha)}$. Notice that, by Lemma~\ref{lem:critsequences}, we then have $N_n \gg N_n^{**}$. We distinguish two cases. 

\emph{Case 1: $\liminf_{n\to \infty} N_n/n>0$.}
Fix $\delta_0>0$ as in Lemma~\ref{lem:crit-large} and define $H_n$ and $V_n$ as in~\eqref{eq:hvdef} but with $\delta_0$ instead of $\eta_n$. 
The asymptotic behavior of $H_n$ is given by Eq.~\eqref{eq:g3}. The proof is complete once we check $V_n = o(H_n)$. 

 $V_n$ is exponentially small in $n$ by Lemma~\ref{lem:arc}. 
Remembering Eqs.~\eqref{eq:phitrunc} and~\eqref{eq:fnrq} we get
\begin{align} 
	\Psi_n(t_n) & = - f_{nr}(x_{nr}) +o(1) = - q(N_n) + \frac{1}{2} (1+o(1)) n \sigma^2 q'(N_n)^2 +o(1) \notag  \\
			& \geq - q(N_n) +o(1) \geq - C N_n^{\alpha}. \label{eq:grolo}
\end{align}
It follows that $H_n \sim n \exp(\Psi_n(t_n))$ goes to zero slower than $\exp(- c n^\alpha)$ for some $c>0$, hence $V_n = o(H_n)$ and 
\be \label{eq:grolol}
	\P(S_n = n \mu + N_n) \sim H_n \sim n \e{- f_{nr}(x_{nr})}.
\ee
If $n^{1/(2-\alpha)} \ll N_n \ll n$, set $\eps_n = t'_n$. By Remark~\ref{rem:tn}, the critical point $t'_n \in (t_n^*,\eta_n)$ is bounded from below by some constant times $\eta_n \sim N_n/(n\sigma^2)$. We define $H_n$ and $V_n$ as in~\eqref{eq:hvdef} but with $\eps_n$ instead of $\eta_n$. $H_n$ is evaluated as in~\eqref{eq:g2}, which yields $H_n \sim n \exp(\Psi_n(t_n))$; Eq.~\eqref{eq:grolo} stays valid. $V_n$ is estimated by Lemma~\ref{lem:arc2}, which yields $V_n = O(\exp(- N_n t'_n/2)))$. 
Now $N_n t'_n \to \infty$ much faster than $N_n^{\alpha}$. Indeed $t'_n$ is bounded from below by some constant times $N_n/n$, hence 
\be
	\frac{N_n^\alpha}{N_n t'_n} = O(n N_n^{\alpha-2}) \to 0. 
\ee
It follows that $V_n = o(H_n)$ and Eq.~\eqref{eq:grolol} stays true. \hfill $\qed$ 
 
\subsection{Small steps: proof of Theorem~\ref{thm:moderate}} 

The proof of Theorem~\ref{thm:moderate} requires two more technical lemmas, proven at the end of this section. Remember the function $R_n(t)$ from~\eqref{eq:rndef}. 

\begin{lemma} \label{lem:sucri}
	If $\limsup_{n \to \infty} N_n/N_n^* <1$, then $R'_n >0$ on $(0,\eta_n)$ for all sufficiently large $n$. 
\end{lemma}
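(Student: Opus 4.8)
The plan is to bound $R'_n=\Psi'_n+\tfrac12\,\psi^{(3)}/\psi''$ from below on $(0,\eta_n)$. Since $\psi$ is strictly concave ($\psi''<0$) and $\psi^{(3)}\ge0$ with $|\psi^{(3)}(t)/\psi''(t)|\le C/t$ by Lemma~\ref{lem:psit}(c), the correction term $\tfrac12\,\psi^{(3)}(t)/\psi''(t)$ lies in $[-\tfrac{C}{2t},0]$ for all small $t>0$, so it suffices to show that $\Psi'_n(t)$ is positive and large enough to absorb the $-C/(2t)$ term. Fix $\delta'\in(0,1)$ with $N_n\le(1-\delta')N_n^*$ for all large $n$; then $N_n=O(N_n^*)=o(n)$, so $\eta_n\sim N_n/(n\sigma^2)$ is well defined and tends to $0$. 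By Eq.~\eqref{eq:Psinprime} and Theorem~\ref{thm:bval}(c), $\Psi'_n(t)=n(\Re\varphi'(t)-\mu)-N_n+\xi(t)$ with $\Re\varphi'(t)-\mu=\sigma^2t+O(t^2)$; since $n\sigma^2 f'_n(\xi)=n\sigma^2q'(\xi)-(N_n-\xi)$ and $q'(\xi(t))=t$, this rewrites as $\Psi'_n(t)=n\sigma^2 f'_n(\xi(t))+O(nt^2)$.

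\emph{Uniform bound on the main term.} The tangent-line estimate~\eqref{eq:tangentbound} gives $f'_n(x)\ge(N_n^*-N_n)/(n\sigma^2)\ge\delta'N_n^*/(n\sigma^2)$ for every $x\in(a,\infty)$, hence $n\sigma^2 f'_n(\xi(t))\ge\delta'N_n^*$. For $t\in(0,\eta_n)$ one has $nt^2=O(n\eta_n^2)=O(N_n^2/n)=O((N_n^*)^2/n)=o(N_n^*)$ because $N_n^*\ll n$ (Lemma~\ref{lem:critsequences}); therefore $\Psi'_n(t)\ge\tfrac{\delta'}{2}N_n^*$ uniformly on $(0,\eta_n)$ for large $n$, and consequently $R'_n(t)\ge\tfrac{\delta'}{2}N_n^*-\tfrac{C}{2t}>0$ as soon as $t\ge\tau_n:=2C/(\delta'N_n^*)$.

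\emph{Small $t$.} On $(0,\tau_n)$ the uniform bound no longer dominates $C/(2t)$; instead one uses that $\xi(t)=\psi'(t)$ is enormous there. Since $\Re\varphi''(0)=\sigma^2>0$, fix $\delta_2>0$ with $\Re\varphi'\ge\mu$ on $[0,\delta_2]$, so $\Psi'_n(t)\ge\xi(t)-N_n$ there. Now $\tau_n\asymp1/N_n^*$, while $q'(N_n^*)\asymp q'(x_n^*)\sim t_n^*\asymp N_n^*/n$ by Eq.~\eqref{eq:qprimevar}, Lemma~\ref{lem:critsequences}, Lemma~\ref{lem:inflection} and Remark~\ref{rem:tnstar}; hence $\tau_n/q'(N_n^*)\asymp n/(N_n^*)^2\to0$, so $\tau_n<q'(N_n^*)$ and therefore $\xi(\tau_n)>N_n^*$ for large $n$. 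Applying~\eqref{eq:qprimevar} with $x=N_n^*$, $y=\xi(\tau_n)$ gives $(\xi(\tau_n)/N_n^*)^{c_2}\ge q'(N_n^*)/\tau_n\to\infty$, i.e. $\xi(\tau_n)/N_n^*\to\infty$; since $N_n\le N_n^*$ this forces $\xi(t)\ge\xi(\tau_n)\ge2N_n$ for all $t\le\tau_n$ and large $n$, whence $\Psi'_n(t)\ge\tfrac12\xi(t)$ on $(0,\tau_n)$. Finally $t\psi'(t)\to\infty$ as $t\searrow0$ (Lemma~\ref{lem:psit}(a)), so there is a fixed $t_0>0$ with $t\,\xi(t)\ge2C$ on $(0,t_0)$; since $\tau_n\to0$, for large $n$ we have $\tau_n<\min(\delta_2,t_0)$, and on $(0,\tau_n)$ then $R'_n(t)\ge\tfrac12\xi(t)-\tfrac{C}{2t}\ge\tfrac14\xi(t)>0$. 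Combining the two regimes yields $R'_n>0$ on $(0,\eta_n)$ for all sufficiently large $n$.

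The one genuinely delicate step is the small-$t$ regime: one must quantify the blow-up of the inverse function $\xi(t)=\psi'(t)$ near $0$ and check that it outruns the logarithmic-derivative term $C/(2t)$ coming from the $\sqrt{|\psi''(t)|}$ prefactor in $R_n$. This is exactly where Lemma~\ref{lem:psit}(a) and the two-sided polynomial control~\eqref{eq:qprimevar} on $q'$ (together with $N_n^*\gg\sqrt n$) are essential; the rest is a routine combination of the tangent-line bound~\eqref{eq:tangentbound} with $N_n^*\ll n$.
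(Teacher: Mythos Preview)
Your proof is correct and takes a genuinely different route from the paper. The paper argues by contradiction: assuming $R'_n$ has a zero $s_n\in(0,\eta_n)$ for infinitely many $n$, it invokes the analysis already carried out in Lemma~\ref{lem:prefactor} (setting $y_n\sim\psi'(s_n)$ and showing $q'(y_n)\sim(N_n-y_n)/(n\sigma^2)$), then combines this with the tangent-line inequality $q'(y_n)\ge(N_n^*-y_n)/(n\sigma^2)$ to force $N_n^*-N_n=o(N_n)$, contradicting $\limsup N_n/N_n^*<1$. Your argument is instead direct and constructive: you split $(0,\eta_n)$ at the explicit threshold $\tau_n\asymp 1/N_n^*$, control $\Psi'_n$ on $[\tau_n,\eta_n)$ via the same tangent-line bound~\eqref{eq:tangentbound} (getting $\Psi'_n\ge\tfrac{\delta'}{2}N_n^*$), and on $(0,\tau_n)$ exploit the blow-up $\xi(t)\to\infty$ together with $t\xi(t)\to\infty$ from Lemma~\ref{lem:psit}(a). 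The paper's proof is shorter because it recycles Lemma~\ref{lem:prefactor}; yours is more self-contained and yields an explicit quantitative lower bound for $R'_n$, which could be useful if one wanted error estimates rather than a bare $o(V_n)$ in the proof of Theorem~\ref{thm:moderate}.
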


\begin{lemma} \label{lem:rncrit}
	Let $N_n \to \infty$ with $N_n \sim N_n^*$. Set $f_n^* (x):= q(x) + \frac{(N_n^* - x)^2}{2 n \sigma^2}$. Suppose that there are infinitely many $n\in \N$ for which the equation $R'_n(t) =0$ has a solution  $s_n \in(0,\eta_n)$.  Then $s_n\sim  t^*_n $ and 
	$$R_n(s_n) = - f_n^*(x_n^*) +o \Bigl( \frac{(N_n^*)^2)}{n}\Bigr) + O(\log n). $$	
\end{lemma}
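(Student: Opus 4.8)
Here is the plan. The strategy is to mimic closely the proof of Lemma~\ref{lem:prefactor}, but keeping track of the precise scales (everything is measured against $N_n^*\asymp n s_n$), and then to read off $R_n(s_n)$ by a low-order Taylor expansion. First I would record the consequence of $R_n'(s_n)=0$. Setting $y_n:=\psi'(s_n)+\tfrac{1}{2}\psi^{(3)}(s_n)/\psi''(s_n)=\xi(s_n)+O(1/s_n)$ (use $\psi'=\xi$ from~\eqref{eq:psidual} and $\psi^{(3)}/\psi''=O(1/t)$ from Lemma~\ref{lem:psit}), the identity $R_n'(t)=n(\Re\varphi'(t)-\mu)-N_n+\psi'(t)+\tfrac{1}{2}\psi^{(3)}(t)/\psi''(t)$ together with $\Re\varphi'(t)-\mu=\sigma^2 t(1+O(t))$ (Theorem~\ref{thm:bval}) and $s_n\le\eta_n\to0$ gives $\tfrac{N_n-y_n}{n\sigma^2}=s_n(1+O(s_n))$; since $s_n\xi(s_n)\to\infty$ by Lemma~\ref{lem:psit}(a), the correction in $y_n$ is negligible against $\xi(s_n)$, so $y_n\sim\xi(s_n)$, and then \eqref{eq:qprimevar} yields $q'(y_n)=s_n(1+o(1))$. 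In particular $s_n=O(N_n^*/n)$ (from $\tfrac{N_n-y_n}{n\sigma^2}\le\tfrac{N_n}{n\sigma^2}=(1+o(1))\tfrac{N_n^*}{n\sigma^2}$) and the ``offset'' $g_n(y_n):=q'(y_n)-\tfrac{N_n-y_n}{n\sigma^2}$ is $o(s_n)$.

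The heart of the argument is to deduce $y_n\sim x_n^*$ from this. The function $g_n(y)=q'(y)-(N_n-y)/(n\sigma^2)$ has $g_n'(y)=q''(y)+1/(n\sigma^2)$, so --- $|q''|$ being strictly decreasing with $|q''(x_n^*)|=1/(n\sigma^2)$ --- it has a unique minimum at $y=x_n^*$, with value $g_n(x_n^*)=(N_n^*-N_n)/(n\sigma^2)=o(N_n^*/n)$ precisely because $N_n\sim N_n^*$. On the other hand, by \eqref{eq:qsecvar} the quantity $n\sigma^2|q''(y)|$ differs from $1$ by a fixed amount for $y=(1\pm\eps)x_n^*$, hence $g_n$ picks up a linear increment of size $\asymp x_n^*/(n\sigma^2)\asymp N_n^*/n$ (using $x_n^*\asymp N_n^*$, Lemma~\ref{lem:critsequences}) once $y$ leaves a fixed neighbourhood of $x_n^*$; this contradicts $g_n(y_n)=o(N_n^*/n)$ unless $y_n/x_n^*\to1$. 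Consequently $\xi(s_n)\sim x_n^*$, whence by \eqref{eq:qprimevar} $s_n=q'(\xi(s_n))\sim q'(x_n^*)\sim t_n^*$ (Lemma~\ref{lem:inflection}) and $s_n\asymp N_n^*/n$ (Remark~\ref{rem:tnstar}); this proves the first assertion.

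For the value I would expand $\Psi_n(s_n)=n\varphi(s_n)-(\mu n+N_n)s_n+\psi(s_n)$, noting that the imaginary part $n\Im\varphi(s_n)\to0$ by Theorem~\ref{thm:bval}. Write $\xi_n:=\xi(s_n)$ and $r_n:=N_n-\xi_n$; the previous step gives $r_n=n\sigma^2 s_n+o(N_n^*)$, so $r_n\asymp N_n^*$. Taylor-expanding $\Re\varphi$ to second order ($\kappa_1=\mu$, $\kappa_2=\sigma^2$) and using $\psi(s_n)=s_n\xi_n-q(\xi_n)$,
\[
   \Psi_n(s_n)=-q(\xi_n)-s_n r_n+\tfrac{1}{2}n\sigma^2 s_n^2+o\!\left(\tfrac{(N_n^*)^2}{n}\right),
\]
where $ns_n^3=O((N_n^*)^3/n^2)=o((N_n^*)^2/n)$; substituting $n\sigma^2 s_n=r_n+o(N_n^*)$ collapses the middle terms to $-r_n^2/(2n\sigma^2)+o((N_n^*)^2/n)$, i.e.\ $\Psi_n(s_n)=-f_n(\xi_n)+o((N_n^*)^2/n)$. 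It then remains to compare $f_n(\xi_n)$ with $f_n^*(x_n^*)$: since $\xi_n\sim x_n^*$ and near $x_n^*$ one has $q'\asymp N_n^*/n$ and $|q''|\asymp1/(n\sigma^2)$, a second-order expansion of $q$ gives $q(\xi_n)-q(x_n^*)=o((N_n^*)^2/n)$, while $N_n-\xi_n$ and $N_n^*-x_n^*$ are both $\asymp N_n^*$ with difference $o(N_n^*)$, so their squares differ by $o((N_n^*)^2)$; hence $f_n(\xi_n)=f_n^*(x_n^*)+o((N_n^*)^2/n)$. Finally the prefactor obeys $\tfrac{1}{2}\log|\psi''(s_n)|=O(\log n)$, by the bound $\log(-\psi''(t))=O(|\log t|)$ as $t\searrow0$ (proof of Lemma~\ref{lem:gaussian}) together with $s_n\asymp N_n^*/n$ and $\sqrt{n}\ll N_n^*=o(n)$. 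Adding up, $R_n(s_n)=\Psi_n(s_n)+\tfrac{1}{2}\log|\psi''(s_n)|=-f_n^*(x_n^*)+o((N_n^*)^2/n)+O(\log n)$.

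The main obstacle is the step $y_n\sim x_n^*$: because $g_n$ has a genuine critical point at $x_n^*$, there is no uniform lower bound on $|g_n'|$, so the usual Lipschitz-from-below localization of Lemma~\ref{lem:prefactor} breaks down. One must instead balance two quantities of the \emph{same} order $N_n^*/n$ --- the minimal value $g_n(x_n^*)$, which is $o(N_n^*/n)$ exactly because $N_n\sim N_n^*$, against the linear growth $\asymp N_n^*/n$ acquired at distance $\asymp x_n^*$ from $x_n^*$. This is the only point where the hypothesis $N_n\sim N_n^*$ is genuinely used and where the scaling has to be tracked sharply; everything else is bookkeeping of error terms against the scale $N_n^*\asymp ns_n$, using only \eqref{eq:qprimevar}, \eqref{eq:qsecvar}, Lemma~\ref{lem:psit} and Lemma~\ref{lem:critsequences}.
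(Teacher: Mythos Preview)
Your proposal is correct and follows essentially the same approach as the paper. The localization step $y_n\sim x_n^*$ is identical in spirit: both you and the paper write the offset as an integral $\int_{x_n^*}^\xi(1+n\sigma^2 q''(u))\,\dd u$ and exploit that the integrand is bounded away from zero once $\xi$ leaves $((1-\eps)x_n^*,(1+\eps)x_n^*)$, while the value at $x_n^*$ is $o(N_n^*/n)$ precisely because $N_n\sim N_n^*$.

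The only organizational difference is in extracting $R_n(s_n)$. The paper packages the non-logarithmic part of $\Psi_n$ into a function $g_n(t)=-q(\xi(t))+\tfrac12 n\sigma^2 t^2-(N_n^*-\xi(t))t$ (not your $g_n$), observes that at $t=q'(x_n^*)$ it equals $-f_n^*(x_n^*)$ with vanishing derivative, bounds $|g_n''|=O(n\sigma^2)$ uniformly on $(0,\eta_n)$, and concludes by a second-order Taylor estimate in $t$ around $q'(x_n^*)$. You instead compute $\Psi_n(s_n)=-f_n(\xi(s_n))+o((N_n^*)^2/n)$ first and then compare $f_n(\xi(s_n))$ with $f_n^*(x_n^*)$ directly via $\xi(s_n)-x_n^*=o(x_n^*)$ and $N_n-N_n^*=o(N_n^*)$. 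Both routes are equivalent and use the same ingredients (Lemma~\ref{lem:psit}, \eqref{eq:qprimevar}--\eqref{eq:qsecvar}, Lemma~\ref{lem:critsequences}); the paper's packaging is slightly slicker because the vanishing of $g_n'$ at $q'(x_n^*)$ absorbs both the $q$-expansion and the quadratic comparison in one stroke, whereas you do them separately.
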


\noindent The zero $s_n$ need not be unique---in case of non-uniqueness the lemma applies to every choice of $s_n$. 

\begin{proof} [Proof of Theorem~\ref{thm:moderate}]
Let $N_n \to \infty$ along $\sqrt{n} \ll N_n \leq (1+o(1)) N_n^*$. 
Let $\eta_n \sim N_n/(n\sigma^2)$ be the solution of~\eqref{eq:etan} and define $V_n$ and $H_n$ as in~\eqref{eq:hvdef}. By Lemma~\ref{lem:arcmod}, the asymptotic behavior of $V_n$ is given by Eqs.~\eqref{eq:veval} and~\ref{eq:vnfinal}, so it remains to verify that $H_n = o(V_n)$. We estimate
\be 
	H_n \leq \frac{n \eta_n}{\pi} \sup_{t\in (0,\eta_n)} \sqrt{|\psi''(t)} \e{\Psi_n(t)}  = \frac{n \eta_n}{\pi} \sup_{t\in (0,\eta_n)} \e{R_n(t)}. 
\ee
Just as in Lemma~\ref{lem:prefactor}, one checks that $R_n(t)$ is increasing for small $t$. We distinguish two cases. 

\emph{Case 1:} $\liminf_{n\to \infty} N_n/N_n^*<1$. Then Lemma~\ref{lem:sucri} shows, for large $n$, 
\be 
	\sup_{t\in (0,\eta_n)}R_n(t) \leq R_n(\eta_n)
		 = \Bigl( n \Re \varphi(\eta_n) - (n \mu +N_n)\eta_n\Bigr) + \psi(\eta_n) + \log \sqrt{|\psi''(\eta_n)|}
\ee
and we deduce from~\eqref{eq:veval} 
\begin{align} 
	\frac{H_n}{V_n} & \leq \frac{n \eta_n}{\pi} \sqrt{2\pi n \sigma^2} \exp\Bigl(\psi(\eta_n) + \log \sqrt{|\psi''(\eta_n)|}\Bigr) \notag \\
		& = \exp \Bigl(\psi(\eta_n) + \log \sqrt{|\psi''(\eta_n)|} + O(\log n) \Bigr) \label{eq:modcase1}
\end{align}
which goes to zero by an estimate analogous to~\eqref{eq:psietan}.

\emph{Case 2:} $N_n \sim N_n^*$. If $R'_n$ reaches its maximum at $t=\eta_n$, the estimate~\eqref{eq:modcase1} still applies. If along some subsequence $(n_j)$,  $R'_n$ reaches its maximum at some interior point $s_n \in (0,\eta_n)$, then we must have $R'_n(s_n) =0$ and by Lemmas~\ref{lem:rncrit} and ~\ref{lem:fncri}
\be 
	\sup_{t\in (0,\eta_n)} R_n(t) \leq - (1+\eps + o(1)) \frac{{N_n^*}^2}{2n \sigma^2} + O(\log n). 
\ee
for some $\eps>0$ and all large $n$. Since $V_n = \exp( - (1+o(1)) \frac{{N_n^*}^2}{2n \sigma^2} + O(\log n) )$ we get 
\be \label{eq:modcase2}
	\frac{H_n}{V_n} \leq \exp \Bigl( - (\eps+o(1)) \frac{{N_n^*}^2}{2n \sigma^2} + O(\log n) \Bigr). 
\ee
By Assumption~\ref{ass:convexity} and Lemma~\ref{lem:critsequences},
\be \label{eq:lastesti}
	\frac{1}{n\sigma^2} = |q''(x_n^*)| \gg \frac{\log x_n^*}{{x_n^*}^2} \geq \frac{\log n + O(1)}{{x_n^*}^2}
\ee
so $\log n=o({N_n^*}^2/n)$ and the right-hand of~\eqref{eq:modcase2} goes to zero. 

We have checked in both cases that $H_n = o(V_n)$, which concludes the proof. 
\end{proof}

\begin{proof}[Proof of Lemma~\ref{lem:sucri}]
	Suppose that the equation $R'_n(s_n) =0$ has a solution $s_n \in (0,\eta_n)$ for infinitely many $n$. Thus $(s_n)$ may be defined only along some subsequence $(n_j)$, which we suppress from the notation. 
	Define $y_n \sim \psi'(s_n)$ as in~\eqref{eq:yn}. Proceeding as in Lemma~\ref{lem:prefactor}, we find that $q'(y_n) \sim (N_n - y_n)/(n \sigma^2)$. By the convexity of $q'$ and the definition of $N_n^*$, we have 
	\be 
		(1+o(1)) \frac{N_n - y_n}{n \sigma^2} = q'(y_n) \geq \frac{N_n^* - y_n}{n\sigma^2 } = \frac{N_n^* - N_n}{n\sigma^2 } + \frac{N_n - y_n}{n\sigma^2}
	\ee	
	hence $N_n^* - N_n \leq o (N_n-y_n) = o(N_n)$ and $\limsup N_n^*/N_n \leq 1$ i.e. $\liminf N_n/N_n^*>1$. 
	
	So if $\limsup_{n\to \infty} N_n/N_n^*<1$, we must have $R'_n \neq 0$ on $(0,\eta_n)$ except possibly for finitely many $n$. From the proof of Lemma~\ref{lem:prefactor} we know that $\lim_{t\searrow 0} R'_n(t) = \infty$ for all $n\in \N$, and Lemma~\ref{lem:sucri} follows. 
\end{proof}

\begin{proof} [Proof of Lemma~\ref{lem:rncrit}]
	For $t \in (0,\eta_n)$ with $\eta_n \sim N_n/(n\sigma^2)$ we have 
	\begin{align}
		R'_n(t) &=  n ( \Re \varphi'(t) - \mu) - N_n + \psi'(t) + \frac{1}{2} \frac{\psi'''(t)}{\psi''(t)} \notag \\
				 & = n \sigma^2 t - (N_n^* - \zeta(t)) + o(n t) + (N_n^*- N_n) +  O\Bigl(\frac{1}{t}\Bigr) \notag \\
				 & = n \sigma^2 t - (N_n^* - \zeta(t)) + o(N^*_n) + o(\zeta(t)).  
	\end{align}
	In terms of the dual variable $\zeta = \zeta(t)=\psi'(t)$, the equation $R'_n(t) =0$ reads 
	\be \label{eq:dualzero}
		n \sigma ^2 q'(\zeta) - (N_n^*- \zeta) = o(N_n^*) + o(\zeta). 
	\ee
	At $\zeta = x_n^*$, the left-hand side of~\eqref{eq:dualzero} vanishes by definition of $N_n^*$. It follows that 
	\be \label{eq:qcri}
		n \sigma ^2 q'(\zeta) - (N_n^*- \zeta) = \int_{x_n^*}^ \zeta (1 + n \sigma^2 q''(x)) dx. 
	\ee
	Fix $\gamma >0$. On $((1+\gamma)x_n^*,\infty)$, we have $1- n \sigma^2 |q''(x)| \geq 1 - n \sigma^2 |q''((1+\gamma)x_n^*)| =: c_\gamma >0$ and for all $x\geq x_n^*$, the integrand is non-negative. As a consequence, 
	\be
		n \sigma ^2 q'(\zeta) - (N_n^*- \zeta) \geq c_\gamma \bigl( \zeta - (1+\gamma)x_n^* \bigr)
	\ee
	for all $\zeta \geq (1+\gamma) x_n^*$. Suppose that Eq.~\eqref{eq:dualzero} has a solution $\zeta'_n$ with $\zeta'_n \geq (1+2\gamma) x_n^*$ along some subsequence. Then $N_n^* = O(x_n^*)= o(\zeta'_n)$ and 
	\be 
		c_\gamma\, \gamma \zeta'_n \leq c_\gamma ( \zeta'_n - (1+\gamma)x_n^*) = o(\zeta'_n),
	\ee
	which is a contradiction (remember $\zeta'_n \geq x_n^* \to \infty$). It follows that for every $\gamma>0$, there are at most finitely many $n$ for which $\zeta'_n >(1+2 \gamma)x_n^*$, hence $\limsup \zeta'_n / x_n^* \leq 1$. 
	
	The case $\psi'(\eta_n)\leq \zeta'_n \leq (1-2\gamma)x_n^*$ is treated in an analogous fashion, based on two observations: first, $n \sigma^2 q''(x) +1 \leq - c_\gamma <0$ for all $x\in (a,(1-\gamma) x_n^*)$ and some $c_\gamma>0$. Second, since $\eta_n$ is of the order of $N_n^*/n$ i.e., of the order of $t_n^*\sim q'(x_n^*)$, the estimate~\eqref{eq:qprimevar} shows that $\zeta'_n$ is bounded from below by some constant times $N_n^*$, i.e., we still have $N_n^*= O(\zeta'_n)$ and $\zeta'_n \to \infty$. We find $\liminf \zeta'_n /x_n^*\geq 1$, hence altogether $\lim \zeta'_n / x_n^*= 1$. This applies in particular to $\zeta'_n := \psi'(s_n)$ i.e. $s_n = q'(\zeta'_n)$.
	 Eq.~\ref{eq:qprimevar} and $t_n^* \sim q'(x_n^*)$ (Lemma~\ref{lem:inflection}) yield $\lim s_n/t_n^* = 1$. 
	 
	 By Remark~\ref{rem:tn} and Lemma~\ref{lem:critsequences} we have $s_n \sim t_n^*\geq \const N_n^*/n\gg 1/\sqrt{n}$ hence $\log s_n = O(\log n)$ and by~\eqref{eq:psiseclog}
	 \be \label{eq:c1}
	 	\log |\psi''(s_n)| = O(\log s_n) = O(\log n).
	 \ee
	 Furthermore for $t\in (0,\eta_n)$
	 \begin{align} 
	 	\Psi_n(t) & = \frac{1}{2} n \bigl( \sigma^2 t^2 + o(\eta_n^2)\bigr)  - N_n t +  t \zeta(t) - q(\zeta(t) \notag \\
	 		& = \Bigl \lbrace - q(\zeta(t)) +  \frac{1}{2} n \sigma^2 t^2 - (N_n^* - \zeta(t)) t \Bigr \rbrace + O\bigl((N_n^*- N_n)\eta_n\bigr) + o( n \eta_n^2) \label{eq:c2}
	 \end{align}	 
	 The two remainders  are $o((N_n^*)^2/n)$ by our choice of $N_n$. Write $g_n(t)$ for the term in curly braces. At $t = q'(x_n^*) = [N_n^*- x_n^*]^2/[n \sigma^2]$ we have  $\zeta(t) = x_n^*$, $g_n(t) = - f_n^*(x_n^*)$ and $g'_n(t) = 0$. Moreover for $t \in (0,\eta_n)$ 
	 \be \label{eq:gns}
	 	g_n''(t) = n \sigma^2 + \zeta'(t) = n \sigma^2 - \frac{1}{|q''(\zeta(t))|} = O(n \sigma^2). 
	 \ee
	 Here we have used that $t\leq \eta_n$ implies that $\zeta(t)$ is bounded from below by a constant times $N_n^*$ or equivalently, $x_n^*$ and therefore $|q''(\zeta(t))|$ is bounded from below by some constant times $1/(n \sigma^2)$. We deduce 
	 \be \label{eq:c3}
	 	\bigl|g_n (s_n) + f_n^*(x_n^*)\bigr| \leq \frac{1}{2} (t_n - q'(x_n^*))^2 \sup_{0,\eta_n} |g''_n| = o(t_n^2 n \sigma^2)  = o(n \eta_n^2) = o\Bigl(\frac{(N_n^*)^2}{n}\Bigr). 
	 \ee
	The estimate on $R_n(s_n)$ follows from Eqs.~\eqref{eq:c1}, \eqref{eq:c2} and~\eqref{eq:c3}.	 
\end{proof}

\appendix

\section{Proofs of Lemmas~\ref{lem:stretched} and~\ref{lem:oklog}} \label{app:scales}

\begin{proof}[Proof of Lemma~\ref{lem:stretched}]
	The proof of Assumption~\ref{ass:convexity} is straightforward and left to the reader. The function $q(\zeta) =  \zeta^\alpha$ is analytic in $\Re \zeta >0$
	and $p(\zeta)= \exp (- \zeta^\alpha)$ satisfies, for all $k \in \bbN$, 
  \be \label{eq:xikb}
     |\zeta^k p(\zeta)| = |\zeta|^k \e{- |\zeta|^\alpha \cos(\alpha {\rm arg}(\zeta))} 
      \leq |\zeta|^k \e{- |\zeta|^\alpha \cos(\alpha \pi /2)}.
  \ee
  Since $\alpha \in (0,1)$, we have $\cos \frac{\alpha \pi}{2} >0$ and Eq.~\eqref{eq:xikb} shows that $|\zeta^k p(\zeta)|$ is integrable along $\Re \zeta =1/2$ and that $p(\zeta)$ grows slower than any exponential $\exp(\eps |\zeta|)$. This proves Assumption~\ref{ass:analyticity}. 
  
  The equation $q''(x_n^*) = -1/(n\sigma^2)$ can be solved explicitly. $N_n^*$ and $N_n^{**}$ are best determined with the scaling relation~\eqref{eq:stretched-scaling}. They have already been determined in~\cite{nagaev}, we omit the proof. For the insensitivity scale, we notice that 
  \be 
  	n \sigma^2 q'(N_n)^2 = n \sigma^2 \alpha^2 N_n^{2 \alpha-2}
  \ee
  which goes to zero if and only if $N_n \gg n^{-1/(2-2\alpha)}$. 
\end{proof} 

\begin{proof} [Proof of Lemma~\ref{lem:oklog}]
	The function $q(x) = - \log c +(\log x)^\beta$ is clearly smooth on $(1,\infty)$. Then $q'(x) = \beta  (\log x)^{\beta -1}/ x$ and as $x\to \infty$, 
	\be \label{eq:oklog2}
		q''(x) \sim -  \frac{\beta  (\log x)^{\beta -1}}{x^2},\quad q'''(x) \sim \frac{2 \beta  (\log x)^{\beta -1}}{x^3}.
	\ee
	Assumption~\ref{ass:convexity} is easily checked. For Assumption~\ref{ass:analyticity} we note $q(\zeta) = c + (\log \zeta)^\beta$ is analytic in $\Re \zeta >1$. Fix $b>1$ and write $\zeta = r \exp( \ii \theta)$. 
	As $|\zeta| \to \infty$ along $\Re \zeta = b$ i.e. $\zeta = b + \ii y$, the argument $\theta$ goes to $\pm \pi/2$ and we have 
	\begin{align}
		\Re (\log |\zeta| + \ii \theta )^\beta & = \Re \Bigl( \log |y| + \frac{1}{2}\log \Bigl(1+ \frac{b^2}{y^2}\Bigr)  + \ii \theta\Bigr)^\beta \\
		& = (\log |y|)^\beta + o(1),
	\end{align}
	conditions(i) and~(ii) in Assumption~\ref{ass:analyticity} are easily checked. Assumption~\ref{ass:analyticity2}(iii) follows from a computation similar to~\eqref{eq:oklog2}. Set $y_0 = \sqrt{r^2 - b^2}$. We have for $\zeta = b + \ii y$, $y\geq y_r$, uniformly in $r$,
	\begin{align}
		\Re q(\zeta) - \Re q(z_r) & = (\log y)^\beta - (\log y_0)^\beta + o(1)\notag \\
			& \geq \beta (\log y_0)^{\beta-1} \log \frac{y}{y_0}  + o(1)
	\end{align}
	hence
	\begin{align} 
		\int_{y_r}^\infty \e{- \Re q(b + \ii y)} \dd y 
		& \leq \e{-\Re q(b+\ii y_0) + o(1)} \int_1^\infty \e{- (\log y_0)^{\beta-1} \log s} y_0 \dd s \notag \\
		& \sim \e{-\Re q(b+\ii y_0)} \frac{y_0}{(\log y_0)^{\beta-1}}= \e{-\Re q(b+\ii y_0) + O(\log r)},
	\end{align}
	which proves Assumption\ref{ass:analyticity2}(i). Next let $\zeta \in \C$ with $\Re \zeta >1$, write $\zeta = r \exp(\ii \theta)$, then 
	\be 
		\zeta q'(\zeta) = \beta (\log \zeta)^{\beta-1} = \beta (\log r + \ii \theta)^{\beta-1} = q'(r) \Bigl(1 + \frac{\ii \theta}{\log r }\Bigr)^{\beta-1}
	\ee
	and for large $r$ and $\theta \in (0,\pi/2)$, 
	\be 
		\frac{\Im \zeta q'(\zeta)}{\Im \zeta q'(r)} \sim  \frac{\beta-1}{r \log r} \frac{\theta}{\sin \theta} \to 0 
	\ee
	and so Assumption~\ref{ass:analyticity2}(ii) holds. 
	
	We now turn to the asymptotic behavior of the sequences $x_n^*$, $N_n^*$ and $N_n^{**}$. Since $q''(x_n) = -\frac1{n\sigma^2}$, it is clear that $x_n \to \infty$ as $n \to \infty$. The equation is
\be
\label{wie geht's?}
\frac1{n\sigma^2} \sim \frac{\beta (\log x_n^*)^{\beta-1}}{(x_n^*)^2}.
\ee
Consequently,
\be
(x_n^*)^2 \sim \beta n \sigma^2 \bigl( \tfrac12 \log (x_n^*)^2 \bigr)^{\beta-1} 
\sim \beta n \sigma^2 (\tfrac12 \log n)^{\beta-1} \Bigl( 1 + \frac{\log \beta \sigma^2 + (\beta-1) \log \log x_n^*}{\log n} \Bigr)^{\beta-1}.
\ee
The last bracket is asymptotically equal to 1 and we get the expression for $x_n^*$. Next, we have from \eqref{eq:xnstar}
\be
N_n^* = x_n^* + n \sigma^2 \frac{\beta (\log x_n^*)^{\beta-1}}{x_n^*} \sim 2 x_n^*.
\ee
The last asymptotics follows from \eqref{wie geht's?}.

We now turn to $N_n^{**}$. It is asymptotically given by the solution of the equations
\begin{align}
&\frac{N_n^2}{2n\sigma^2} = q(x_n) + \frac{(N_n-x_n)^2}{2n\sigma^2}, \label{Satz eins} \\
&q'(x_n) = \frac{N_n-x_n}{n\sigma^2}. \label{Satz zwei}
\end{align}
Eq.\ \eqref{Satz zwei} is equivalent to
\be
x_n^2 - N_n x_n + \beta n \sigma^2 (\log x_n)^{\beta-1} = 0.
\ee
The relevant solution is
\be
x_n = \tfrac12 \Bigl( N_n + \sqrt{ N_n^2 - 4 \beta n \sigma^2 (\log x_n)^{\beta-1}} \Bigr) 
= N_n \Bigl( 1 - \frac{\beta n \sigma^2}{N_n^2} (\log x_n)^{\beta-1} (1 + o(1)) \Bigr).
\ee
It follows that $N_n - x_n \sim \frac{\beta n \sigma^2}{N_n} (\log x_n)^{\beta-1}$. We insert this in \eqref{Satz eins}; using $\log x_n \sim \log N_n$, we get
\be
\frac{N_n^4}{2n\sigma^2} - N_n^2 (\log N_n)^\beta - \tfrac12 \beta^2 n \sigma^2 (\log N_n)^{2\beta-2} = o(1).
\ee
The relevant solution is
\be
\begin{split}
N_n^2 &\sim n \sigma^2 \Bigl[ (\log N_n)^\beta + \sqrt{(\log N_n)^{2\beta} + \beta^2 (\log N_n)^{2\beta-2}} \Bigr] \\
&\sim 2n \sigma^2 (\tfrac12 \log N_n^2)^\beta 
\sim 2^{1-\beta} n \sigma^2 \bigl( \log n + \log 2\sigma^2 + \beta \log \log N_n \bigr)^\beta.
\end{split}
\ee
Only the term $\log n$ matters in the last bracket and the result follows.

The last part of the lemma on insensitivity sequence is shown as in~\cite[Section 8.3]{denisov-dieker-shneer} the proof is therefore omitted. 
\end{proof}

\section{Bivariate Hessian} 

As explained in Step 4 of the proof outline,  the Hessian at $(t_n,\zeta_n)$ has determinant $-1 + o(1)$ and is a saddle point of $\Phi_n(t,\zeta)$, considered as a function of two real variables $t,\zeta >0$. In order to get rid of off-diagonal elements in the Hessian and to give all eigenvalues the same sign, we take complex $\zeta$ and reparametrize, as sketched in Step 5. 

\begin{lemma} \label{lem:Fn}
   Let $\zeta(t)$ be the unique solution of $(\partial_\zeta \Phi_n)(t,\zeta) =t - q'(\zeta) = 0$.
Set  
    \begin{equation*} 
      F_n: (0,\infty) \times \bbR \to \bbC,\quad F_n(t,s) = \Phi_n (t, \zeta(t) + \ii s).
     \end{equation*} 
  Then $(\nabla F_n)(t_n,0) =0$, 
   \begin{equation*}  
       {\rm Hess}\, F_n(t,0) =\begin{pmatrix} \beta(t) & 0 \\ 0 &   q''(\zeta(t)) \end{pmatrix},  \quad \beta(t)  = - \frac{\det ({\rm Hess}\, \Phi_n)(t,\zeta(t))}{q''(\zeta(t))}.
   \end{equation*} 
\end{lemma}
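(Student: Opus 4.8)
The plan is to prove everything by a direct chain-rule computation that exploits the special choice of $\xi(t)$. Recall from~\eqref{eq:sndef} that $\Phi_n(t,\xi) = -q(\xi) + n\Re\varphi(t) - mt + t\xi$, so $\partial_\xi\Phi_n(t,\xi) = t - q'(\xi)$, $\partial_\xi^2\Phi_n(t,\xi) = -q''(\xi)$, and $\partial_t\partial_\xi\Phi_n = 1$; the function $\xi(t)$ is by definition characterised by $q'(\xi(t)) = t$, which upon differentiation yields the relation $\xi'(t)\,q''(\xi(t)) = 1$ that will be used repeatedly. Since $q$ is analytic on $\Re\xi\geq b$ (Assumption~\ref{ass:analyticity2}), the map $s\mapsto\Phi_n(t,\xi(t)+\ii s)$ is smooth near $s=0$, so $F_n$ is well defined and all the differentiations below are legitimate.

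First I would compute the gradient. By the chain rule, $\partial_s F_n(t,s) = \ii\,(\partial_\xi\Phi_n)(t,\xi(t)+\ii s)$ and $\partial_t F_n(t,s) = (\partial_t\Phi_n)(t,\xi(t)+\ii s) + \xi'(t)\,(\partial_\xi\Phi_n)(t,\xi(t)+\ii s)$. Setting $s=0$ and using $\partial_\xi\Phi_n(t,\xi(t)) = t - q'(\xi(t)) = 0$ collapses these to $\partial_s F_n(t,0) = 0$ and $\partial_t F_n(t,0) = (\partial_t\Phi_n)(t,\xi(t)) = \Psi_n'(t)$, the last equality being the definition of $\Psi_n(t)=\Phi_n(t,\xi(t))$ together with $\partial_\xi\Phi_n(t,\xi(t))=0$. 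Since $t_n$ was chosen so that $\Psi_n'(t_n) = 0$, this already gives $\nabla F_n(t_n,0) = (\Psi_n'(t_n),0) = 0$.

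Next I would compute the three second-order partials at $s=0$. Differentiating $\partial_s F_n = \ii\,(\partial_\xi\Phi_n)(t,\xi(t)+\ii s)$ once more in $s$ gives $\partial_s^2 F_n(t,0) = -\,\partial_\xi^2\Phi_n(t,\xi(t)) = q''(\xi(t))$, the claimed lower-right entry. Differentiating the same expression in $t$ gives $\partial_t\partial_s F_n(t,0) = \ii\bigl[\,\partial_t\partial_\xi\Phi_n + \xi'(t)\,\partial_\xi^2\Phi_n\,\bigr]_{\xi=\xi(t)} = \ii\bigl[\,1 + \xi'(t)\,(-q''(\xi(t)))\,\bigr] = \ii[1-1] = 0$, so the off-diagonal entries vanish; this cancellation is the real content of the reparametrisation and happens precisely because $\xi(t)$ was chosen to annihilate $\partial_\xi\Phi_n$. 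Finally $\partial_t^2 F_n(t,0) = \tfrac{d}{dt}\bigl(\partial_t F_n(t,0)\bigr) = \Psi_n''(t)$.

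It then remains to recognise $\Psi_n''(t)$ as $\beta(t)$. Using the Hessian of $\Phi_n$ recorded in Step~4, $\det(\mathrm{Hess}\,\Phi_n)(t,\xi(t)) = -\,n\Re\varphi''(t)\,q''(\xi(t)) - 1$, so that $-\det(\mathrm{Hess}\,\Phi_n)(t,\xi(t))/q''(\xi(t)) = n\Re\varphi''(t) + 1/q''(\xi(t))$; on the other hand $\Psi_n(t) = \Phi_n(t,\xi(t)) = n\Re\varphi(t) - mt + \psi(t)$ gives, via $\psi''(t) = 1/q''(\xi(t))$ from Eq.~\eqref{eq:psidual}, exactly $\Psi_n''(t) = n\Re\varphi''(t) + 1/q''(\xi(t))$. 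Hence $\partial_t^2 F_n(t,0) = \Psi_n''(t) = \beta(t)$, which finishes the proof. I do not expect a genuine obstacle here: the computation is elementary, and the only point deserving care is the justification of the complex differentiation in the $s$-direction, which is licensed by the analyticity in Assumption~\ref{ass:analyticity2}; everything else is bookkeeping with the identity $\xi'(t)\,q''(\xi(t)) = 1$.
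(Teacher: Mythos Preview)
Your proof is correct and follows essentially the same chain-rule computation as the paper's proof. The only cosmetic difference is that you identify $\partial_t^2 F_n(t,0)$ with $\Psi_n''(t)$ and verify the formula for $\beta(t)$ by expanding both sides using the explicit Hessian of $\Phi_n$, whereas the paper derives $\beta(t)$ more abstractly from the relations $\tfrac{d}{dt}\partial_\xi\Phi_n(t,\xi(t))=0$ and $\tfrac{d^2}{dt^2}\Phi_n(t,\xi(t))$ without invoking the particular form of $\Phi_n$.
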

\noindent Note that $\zeta_n = \zeta(t_n)$, so as $n\to \infty$ 
\be \label{eq:Fnhess}
  \det {\rm Hess}\, F_n(t_n,0) =  \beta(t_n) q''(\zeta_n)=- \det ({\rm Hess}\, \Phi_n)(t_n,\zeta_n) =  1+o(1).
\ee

\begin{proof} 
   We have 
   \begin{align}
    \partial_t F_n(t,s) & = (\partial_t \Phi_n)(t,\zeta(t) + \ii s) + (\partial_\zeta \Phi_n)(t,\zeta(t) + \ii s) \zeta'(t),\notag \\
    \partial_s F_n(t,s) & = \ii \partial_\zeta \Phi_n(t,\zeta(t) + \ii s). 
   \end{align}
   At $t=t_n$, $s=0$, we have $\zeta(t) = \zeta_n$ and $(\nabla F_n)(t_n,0) = \nabla \Phi_n(t_n,\zeta_n) =0$. For the Hessian, we compute 
   \begin{align}  
      \partial_s^2 F_n(t,0) &= - \partial_\zeta^2 \Phi_n(t, \zeta(t))= q''(\zeta(t)), \notag \\ 
      \partial_t \partial_s F_n(t,0) & = {\rm i} (\partial_t \partial_\zeta \Phi_n)(t,\zeta(t) ) + {\rm i} \partial_\zeta^2 \Phi_n(t,\zeta(t)) \notag \\
      \partial_t^2 F_n(t,0) & = \frac{\dd^2}{\dd t^2} \Phi_n(t,\zeta(t)) = \beta(t).
\end{align}
 By definition of $\zeta(t)$, 
  \be \label{eq:h1}
    0= \frac{\dd}{\dd t} \partial_\zeta \Phi_n (t,\zeta(t))   
       =  (\partial_t\partial_\zeta \Phi_n) (t,\zeta(t)) +  \partial_\zeta^2 \Phi_n(t,\zeta(t)) 
\zeta'(t). 
  \ee
  It follows that $\partial_t \partial_s F_n(t,0) =0$, 
  and 
  \begin{align}
 \beta(t) & =     \frac{\dd^2}{\dd t^2} \Phi_n(t,\zeta(t))  = \frac{\dd}{\dd t} (\partial_t \Phi_n)(t,\zeta(t)) \notag \\
      & = (\partial_t^2 \Phi_n)(t,\zeta(t)) + 
   (\partial_t \partial_\zeta \Phi_n) (t,\zeta(t)) \zeta'(t). \label{eq:h2}
  \end{align}
 We solve for $\zeta'(t)$ in Eq.~\eqref{eq:h1}, insert into Eq.~\eqref{eq:h2}, and obtain the formula for $\beta(t)$. 
\end{proof}

\noindent {\small {\bf Acknowledgement.}  The authors wish to thank the Laboratoire Jean Dieudonn\'e of the University of Nice and the Institut Henri Poincar\'e (during the program of the spring 2013 organized by M. Esteban and M. Lewin) for the kind hospitality and for the opportunity to discuss this project. S. J. thanks V. Wachtel for pointing out S.~V.~Nagaev's article~\cite{nagaev73}.}



\begin{thebibliography}{99}

\bibitem{arratia-barbour-tavare} 
R. Arratia, A. D. Barbour, S. Tavar\'e,
Logarithmic combinatorial structures,
EMS Monographs in Mathematics. European Mathematical Society (EMS), Z{\"u}rich  (2003)

\bibitem{agl} 
I. Armend{\'a}riz, S. Grosskinsky, M. Loulakis, 
\textit{Zero-range condensation at criticality},
Stoch. Proc. Appl. 123, 3466--3496 (2013)


\bibitem{denisov-dieker-shneer} 
D. Denisov, A. B. Dieker, V. Shneer,
\textit{Large deviations for random walks under subexponentiality: the big-jump domain},
Ann. Probab. 36, 1946--1991 (2008)

\bibitem{Doetsch}
G. Doetsch,
{Theorie und Anwendung der Laplace-Transformationen}. Springer, 
Berlin (1937)


\bibitem{extremalbook}
P. Embrechts, C. Kl{\"u}ppelberg, T. Mikosch,
{Modelling extremal events. For insurance and finance},
Applications of Mathematics (New York), 33. Springer-Verlag, Berlin (1997)

\bibitem{eju} 
N.M. Ercolani, S. Jansen, D. Ueltschi,
\textit{Random partitions in statistical mechanics},
Electron. J. Probab. 19, no. 82, 1-37 (2014)

\bibitem{Feller}
W. Feller, 
{An Introduction to Probability theory and its Applications, Vol. 2},
John Wiley \& Sons, Inc., New York-London-Sydney (1966)

\bibitem{FGS}
P. Flajolet, S. Gerhold, B. Salvy,
\textit{Lindel\"of representations and (non-)holonomic sequences},
Electr. J. Combin. 17, \#R3 (2010)

\bibitem{FS}
P. Flajolet, R. Sedgewick,
{Analytic Combinatorics},
Cambridge University Press, Cambridge (2009)

\bibitem{ibragimov-linnik} 
I. A. Ibragimov, Y. U. Linnik, 
\textit{Independent and stationary sequences of random variables}, 
Wolters-Noordhoff Publishing Groningen (1971)

\bibitem{Lind}
E. Lindel\"of,
\textit{Le calcul des r\'esidus et ses applications \`a la th\'eorie des fonctions},
Collection de monographies sur la th\'eorie des fonctions, publi\'ee sous la direction de M. \'Emile Borel. Gauthier-Villars, Paris, 1905. Reprinted by Gabay, Paris (1989)


\bibitem{nagaev}
A. V. Nagaev, 
\textit{Local limit theorems with regard to large deviations when Cram{\'e}r's condition is not satisfied}, 
Litovsk. Mat. Sb. 8, 553--579 (1968). Selected Transl. in Math. Statist. and Probability 11, 249--278 (1973)

\bibitem{nagaev-erratum}
A. V. Nagaev, 
\textit{Letter to the editor: A remark concerning the article ``Local limit theorems with regard to large deviations''}, Litovsk. Mat. Sb. 10, 206 (1970); Selected Transl. in Math. Statist. and Probability 11,  279 (1973)

\bibitem{nagaev73}
S. V. Nagaev, 
\textit{Large deviations for sums of random variables}, 
 Transactions of the Sixth Prague Conference on Information Theory, Statistical Decision Functions, Random Processes (Tech. Univ., Prague, 1971), 657--674. Academia, Prague (1973) 

\bibitem{nagaev79}
S. V. Nagaev, 
\textit{Large deviations of sums of independent random variables}, 
Ann. Probab. 7, 745--789 (1979)

\bibitem{Pol}
H. Pollard,
\textit{The Representation of $\e{-x^\lambda}$ as a Laplace Integral},
Bull. Am. Math. Soc., {\bf 52}, 908--910 (1946)

\bibitem{rozovskii}
L.V. Rozovski{\u\i},
\textit{Probabilities of large deviations on the whole axis},
Teor. Veroyatnost. i Primenen. 38, 79--109 (1993). Transl. in
Theory Probab. Appl. 38, 53--79 (1993)

\end{thebibliography}
\end{document}